\author{Arielle MARC-ZWECKER}
\title{Triangle groups in the complex hyperbolic plane and special fibers of the momentum map in PU(2,1).}
\begin{document}
	\maketitle
	
	\theoremstyle{plain}
	\newtheorem{Theo}{Theorem}[section]
	\newtheorem*{Theo*}{Theorem}
	\newtheorem{Prop}[Theo]{Proposition}
	\newtheorem{Prob}[Theo]{Probl\`eme}
	\newtheorem{Lemm}[Theo]{Lemma}  
	\newtheorem{Coro}[Theo]{Corollary}
	\newtheorem{Propr}[Theo]{Propri\'et\'e}
	\newtheorem{Conj}[Theo]{Conjecture}
	\newtheorem*{Conj*}{Conjecture}
	\newtheorem{Aff}[Theo]{Affirmation}
	\newtheorem{thm}{Theorem}[section]
	\renewcommand{\thethm}{\empty{}} 
	\newtheorem{Def}{Definition}[section]
	\renewcommand{\theDef}{\empty{}}
	\newtheorem{conj}{Conjecture}[section]
	\renewcommand{\theconj}{\empty{}}
	
	\theoremstyle{definition}
	\newtheorem{Defi}[Theo]{Definition}
	\newtheorem*{Defi*}{Definition}
	\newtheorem{Exem}[Theo]{Example}
	\newtheorem{Nota}[Theo]{Notation}
	
	\theoremstyle{remark}
	\newtheorem{Rema}[Theo]{Remark}
	\newtheorem{NB}[Theo]{N.B.}
	\newtheorem{Comm}[Theo]{Commentaire}
	\newtheorem{question}[Theo]{Question}
	\newtheorem{exer}[Theo]{Exercice}

	\def\emptyset{\varnothing}
	
	\def\NN{{\mathbb N}}    
	\def\ZZ{{\mathbb Z}}     
	\def\RR{{\mathbb R}}    
	\def\QQ{{\mathbb Q}}    
	\def\CC{{\mathbb C}}    
	\def\HH{{\mathbb H}}    
	\def\AA{{\mathbb P}}     
	\def\KK{{\mathbb K}}     
	
	\def\cA{{\mathcal A}}  \def\cG{{\mathcal G}} \def\cM{{\mathcal M}} \def\cS{{\mathcal S}} \def\cB{{\mathcal B}}  \def\cH{{\mathcal H}} \def\cN{{\mathcal N}} \def\cT{{\mathcal T}} \def\cC{{\mathcal C}}  \def\cI{{\mathcal I}} \def\cO{{\mathcal O}} \def\cU{{\mathcal U}} \def\cD{{\mathcal D}}  \def\cJ{{\mathcal J}} \def\cP{{\mathcal P}} \def\cV{{\mathcal V}} \def\cE{{\mathcal E}}  \def\cK{{\mathcal K}} \def\cQ{{\mathcal Q}} \def\cW{{\mathcal W}} \def\cF{{\mathcal F}}  \def\cL{{\mathcal L}} \def\cR{{\mathcal R}} \def\cX{{\mathcal X}} \def\cY{{\mathcal Y}}  \def\cZ{{\mathcal Z}}
	
	
	\def\mfA{{\mathfrak A}} \def\mfA{{\mathfrak P}} \def\mfS{{\mathfrak S}}\def\mfZ{{\mathfrak Z}} \def\mfM{{\mathfrak M}} \def\mfQ{{\mathfrak Q}} \def\mfE{{\mathfrak E}} \def\mfL{{\mathfrak L}} \def\mfW{{\mathfrak W}} \def\mfR{{\mathfrak R}} \def\mfK{{\mathfrak K}} \def\mfX{{\mathfrak X}} \def\mfT{{\mathfrak T}} \def\mfJ{{\mathfrak J}} \def\mfC{{\mathfrak C}} \def\mfY{{\mathfrak Y}} \def\mfH{{\mathfrak H}} \def\mfV{{\mathfrak V}}\def\mfU{{\mathfrak U}}\def\mfG{{\mathfrak G}} \def\mfB{{\mathfrak B}} \def\mfI{{\mathfrak I}} \def\mfF{{\mathfrak F}} \def\mfN{{\mathfrak N}} \def\mfO{{\mathfrak O}} \def\mfD{{\mathfrak D}} 
	
	\def\mfa{{\mathfrak a}} \def\mfp{{\mathfrak p}} \def\mfs{{\mathfrak s}}  \def\mfz{{\mathfrak z}} \def\mfm{{\mathfrak m}} \def\mfq{{\mathfrak q}}  \def\mfe{{\mathfrak e}} \def\mfl{{\mathfrak l}} \def\mfw{{\mathfrak w}} \def\mfr{{\mathfrak r}} \def\mfk{{\mathfrak k}} \def\mfx{{\mathfrak x}} \def\mft{{\mathfrak t}} \def\mfj{{\mathfrak j}} \def\mfc{{\mathfrak c}} \def\mfy{{\mathfrak y}} \def\mfh{{\mathfrak h}} \def\mfv{{\mathfrak v}} \def\mfu{{\mathfrak u}} \def\mfg{{\mathfrak g}} \def\mfb{{\mathfrak b}} \def\mfi{{\mathfrak i}} \def\mff{{\mathfrak f}} \def\mfn{{\mathfrak n}} \def\mfo{{\mathfrak o}} \def\mfd{{\mathfrak d}} 
	
	\begin{abstract}
		In this work, we consider relative character varieties for
		representations of the 3-punctured sphere group  in PU(2,1).
		We provide necessary and sufficient conditions on the peripheral conjugacy classes, for such a representation to admit a decomposition as products of special elliptic elements. We prove that the representations satisfying these conditions form fibers of the so-called momentum map for PU(2,1). We apply these results to representations of the even subgroup of triangle groups, and describe components of the associated character variety.
	\end{abstract}

	\section{Introduction}
	
	In this paper, we study the representations of the group $\Gamma=\langle a,b,c \ | \ abc=1\rangle$ into $\operatorname{PU}(2,1)$, the isometry group of the complex hyperbolic plane $\mathbb{H}_\CC^2$. The group $\Gamma$ is isomorphic to the free group on two generators $\mathbb{F}_2$. A classical source of examples of such representations is the family of triangle groups, that are the following quotients of $\Gamma$:
	
	\begin{Defi*}
		Let $p,q,r\in\NN\cup\{\infty\}$ such that $1/p+1/q+1/r<1$. We define the \textit{triangle group}:
		$$\Gamma_{p,q,r}=\langle i_1,i_2,i_3 \ | \ i_k^2=(i_1i_2)^p=(i_2i_3)^q=(i_3i_1)^r=1\rangle,$$
		where the relation $(i_ki_l)^s=1$ is to omit if $s=\infty$, and its even subgroup
		$$\Gamma_{p,q,r}^{(2)}=\langle a,b,c \ | \ a^p=b^q=c^r=abc=1\rangle.$$
		We say that $\Gamma_{p,q,r}^{(2)}$ is a \textit{$(p,q,r)$-group}.
	\end{Defi*}
	
	The group $\Gamma_{p,q,r}$ acts naturally on $\mathbb{H}_\RR^2$ as the subgroup of $\operatorname{PO}(2,1)$ generated by the symmetries in the sides of a geodesic triangle with angles $\pi/p,\pi/q,\pi/r$. A family of representations in $\operatorname{PU}(2,1)$ is constructed in the following way: we first embed $\operatorname{PO}(2,1)$ into $\operatorname{PU}(2,1)$, and then deform the representation. This amounts to considering in $\mathbb{H}_\CC^2$ a triangle of complex lines with angles $\pi/p,\pi/q,\pi/r$ and sending each generator $i_k$ to a complex reflection $I_k$ of order two in each of the complex lines (i.e.\ a holomorphic isometry conjugate to the map $(z_1,z_2)\mapsto(z_1,-z_2)$; see Definition \ref{comprefl} in Section \ref{compref}). It is a classical fact that the space of such representations up to conjugation in $\operatorname{PU}(2,1)$ is compact and connected of dimension one. It is parameterized by the angular invariant $\alpha$ of the triangle (a projective invariant similar to a triple ratio, see \cite{Pra} and Section \ref{reflgrp} below). When $\alpha=\pi$, the three vertices of the triangle lie in a copy of $\mathbb{H}_\RR^2$, the representation restricts to $\operatorname{PO}(2,1)$ and coincides with the one introduced above.
	
	Schwartz stated a number of conjectures predicting the behaviour of the groups $\Gamma_{p,q,r}$ according to the values of $(p,q,r)$ and $\alpha$ (see \cite{Sch}). The first one essentially states that the discreteness of the representation only depends on the isometry type of two words in the group:
	
	\begin{Conj*}
		\emph{\cite{Sch}} We assume $3\leq p\leq q\leq r$. We consider a representation of $\Gamma_{p,q,r}$ as described above. Let $W_A=I_3I_2I_1I_2$ and $W_B=I_1I_2I_3$. The representation is discrete and faithful if and only if neither $W_A$ nor $W_B$ is elliptic.
	\end{Conj*}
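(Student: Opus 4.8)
The plan is to treat the two implications separately, since they are of very different difficulty. The forward implication---if the representation is discrete and faithful then neither $W_A$ nor $W_B$ is elliptic---is essentially formal. First I would observe that, because $1/p+1/q+1/r<1$, the abstract group $\Gamma_{p,q,r}$ is isomorphic to the real hyperbolic reflection group generated by the side-reflections of a triangle with angles $\pi/p,\pi/q,\pi/r$, in which the words $W_A=i_3i_2i_1i_2$ and $W_B=i_1i_2i_3$ act as hyperbolic isometries; in particular they have infinite order in $\Gamma_{p,q,r}$. If the representation is faithful, their images $I_3I_2I_1I_2$ and $I_1I_2I_3$ therefore have infinite order in $\operatorname{PU}(2,1)$. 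Now an elliptic isometry of $\mathbb{H}_\CC^2$ of infinite order is a rotation through an angle that is an irrational multiple of $\pi$, and its powers accumulate at the identity, so it cannot belong to a discrete subgroup. Thus discreteness together with faithfulness forces both words to be non-elliptic, which is the ``only if'' direction.

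The converse is the substance of the conjecture, and this is where I expect the genuine difficulty. The natural strategy is the one standard in complex hyperbolic geometry: build an explicit fundamental domain for $\langle I_1,I_2,I_3\rangle$, bounded by bisectors equidistant from the mirrors of the complex reflections, and apply the Poincar\'e polyhedron theorem to deduce discreteness together with a presentation. Since the deformation space is one-dimensional, parameterized by the angular invariant $\alpha$, I would track the combinatorial type of this polyhedron as $\alpha$ varies. The non-ellipticity of $W_A$ and $W_B$ should enter exactly as the hypothesis guaranteeing that the relevant bisectors meet in the expected pattern, so that the cycle and tessellation conditions of the Poincar\'e theorem hold and no spurious elliptic cycle relation appears; when one of these words becomes elliptic, the corresponding pair of bisectors should instead fail to bound a domain, heralding the loss of discreteness.

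The main obstacle is that this construction does not proceed uniformly. Bisectors in $\mathbb{H}_\CC^2$ are not totally geodesic, their pairwise intersections are notoriously intricate, and the combinatorics depends sensitively on $(p,q,r)$ and on $\alpha$, so I would not expect to obtain the conjecture in full generality this way, only for tractable families such as small $(p,q,r)$ or the ideal case $r=\infty$. What I do expect to carry out completely is the reduction of the ellipticity question itself to a trace computation: using Goldman's classification, an element of $\operatorname{SU}(2,1)$ with trace $z$ is regular elliptic precisely when $f(z)=|z|^4-8\operatorname{Re}(z^3)+18|z|^2-27<0$, so the locus where $W_A$ or $W_B$ is elliptic is cut out explicitly by computing these two traces as functions of $\alpha$ and analysing the sign of $f$. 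Here the momentum-map description of the relevant fibers---the main technical contribution announced in the abstract---should do the real work, by organizing these trace conditions into the structure of the character variety rather than treating them as isolated computations.
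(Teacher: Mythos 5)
This statement is an open conjecture, not a theorem of the paper: the paper attributes it to Schwartz, explicitly records that only partial results are known (the cases $(\infty,\infty,\infty)$, $(3,3,\infty)$, $(3,3,n)$), and states that ``no general result exists today.'' There is therefore no proof in the paper to compare against, and your proposal does not supply one either. What you actually establish is only the easy ``only if'' direction: $W_A$ and $W_B$ have infinite order in the abstract group $\Gamma_{p,q,r}$, faithfulness transfers this to their images, and an infinite-order elliptic element lies in the compact stabilizer $\mathbb{P}(\operatorname{U}(2)\times\operatorname{U}(1))$ of its fixed point, so the group it generates is an infinite subset of a compact group and cannot be discrete. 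That argument is sound. But for the converse you offer a research program (Poincar\'e polyhedron domains bounded by bisectors, tracked along the one-parameter family in $\alpha$) and you concede yourself that you do not expect it to go through in general. A plan whose hard step is acknowledged to be out of reach is a gap, not a proof.

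A secondary point: your suggestion that the momentum-map machinery of this paper ``should do the real work'' in organizing the ellipticity conditions on $W_A$ and $W_B$ overstates its relevance. The paper's results concern the decomposability of triples $(A,B,C)=(I_1I_2,I_2I_3,I_3I_1)$ into products of complex reflections and the structure of the corresponding fibers of $\mu$ over the reducible skeleton; they say nothing about the isometry type of the words $W_A=I_3I_2I_1I_2$ and $W_B=I_1I_2I_3$, which is the entire content of Schwartz's conjecture. The trace-function reduction of the ellipticity locus via Goldman's discriminant is correct but routine, and it does not touch the genuinely hard implication (non-ellipticity implies discreteness and faithfulness), which remains open.
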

	
	A certain number of partial results exist concerning this conjecture. It was proved for some particular values of $p,q,r$: see for instance \cite{GolPar} and \cite{Sch2} that prove the case $(\infty,\infty,\infty)$, or \cite{ParWil} for the case $(3,3,\infty)$; see also \cite{ParWanXie} for the case $(3,3,n)$. However, no general result exists today.
	
	The group $\Gamma_{p,q,r}^{(2)}$ is an index two subgroup of $\Gamma_{p,q,r}$, as well as the orbifold fundamental group of a $(\pi/p,\pi/q,\pi/r)$-triangular pillowcase. Understanding the representations of $\Gamma_{p,q,r}^{(2)}$ into $\operatorname{PU}(2,1)$ corresponds to describing the triples $(A,B,C)\in\operatorname{PU}(2,1)^3$ such that $A^p=B^q=C^r=ABC=1$. From now on, we always assume that the elements $A,B,C$ are elliptic.
	
	The representations of $\Gamma_{p,q,r}^{(2)}$ arising from representations of $\Gamma_{p,q,r}$ have the special property that there exists a triple of involutions $I_1,I_2,I_3\in\operatorname{PU}(2,1)$ such that $A=I_1I_2,B=I_2I_3,C=I_3I_1$. Such triples $(A,B,C)$ are sometimes called \textit{$\CC$-decomposable}, see Section 8.2 in \cite{PauWil} and Definition 1 in \cite{Wil}. Our main goal is to find other types of representations of $\Gamma_{p,q,r}^{(2)}$. To do so, we widen the notion of decomposability in the following way, allowing so-called \textit{complex reflections} instead of involutions. Complex reflections are elliptic isometries that fix pointwise a complex line, called the \textit{mirror}, and rotate around it. In particular they may have infinite order (see Section \ref{compref} for details).
	
	\begin{Defi*}
		We say that the triple $(A,B,C)$ is \textit{decomposable} if there exist three complex reflections $R_1,R_2,R_3$ such that $A=R_1R_2^{-1},B=R_2R_3^{-1},$ $C=R_3R_1^{-1}$.
	\end{Defi*}

In this paper, we address the following questions:
\begin{enumerate}[1.]
	\item Provide conditions on the conjugacy classes of $A,B,C$ guaranteeing the decomposability of the triple $(A,B,C)$;
	\item When these conditions are satisfied, describe the corresponding set of conjugacy classes of such triples up to global conjugation.
\end{enumerate}

We answer Question 1 in Section \ref{decell}, by providing a necessary and sufficient condition for an irreducible triple to be decomposable. Namely, we prove (Theorem \ref{caractdecomp}) that:

\begin{Theo*}
	An irreducible triple $(A,B,C)$ is decomposable if and only if there exist three eigenvalues $\lambda_A,\lambda_B,\lambda_C$ of any lifts in $\operatorname{SU}(2,1)$ such that $(\lambda_A\lambda_B\lambda_C)^3=1$.
\end{Theo*}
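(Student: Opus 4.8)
The plan is to prove the two implications separately, working throughout with lifts to $\operatorname{SU}(2,1)$, where a complex reflection $R$ has eigenvalues $(\mu,\mu,\nu)$ with a two-dimensional eigenspace — its mirror $V=\ker(R-\mu\,\mathrm{Id})$, a plane of signature $(1,1)$ — and with $\mu^2\nu=1$ forced by $\det R=1$.

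Necessity is the easy direction and is essentially a remark about a shared eigenvector. Given a decomposition $A=R_1R_2^{-1}$, $B=R_2R_3^{-1}$, $C=R_3R_1^{-1}$ with mirrors $V_1,V_2,V_3$ and scalars $\mu_1,\mu_2,\mu_3$, I would observe that the two planes $V_1,V_2\subset\CC^3$ necessarily meet in a line $V_1\cap V_2=\langle u\rangle$, since $\dim(V_1\cap V_2)\geq 2+2-3=1$. For $u$ in this line one has $R_2^{-1}u=\mu_2^{-1}u$ and hence $R_1R_2^{-1}u=\mu_1\mu_2^{-1}u$, so that $\lambda_A:=\mu_1\mu_2^{-1}$ is an eigenvalue of the lift $R_1R_2^{-1}$ of $A$. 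Running this cyclically produces eigenvalues $\lambda_B=\mu_2\mu_3^{-1}$ and $\lambda_C=\mu_3\mu_1^{-1}$ of $B$ and $C$, whose product telescopes to $\lambda_A\lambda_B\lambda_C=1$. Since changing the lifts of $A,B,C$ multiplies each $\lambda$ by a cube root of unity, this product is well defined only up to such a root, which is exactly why the lift-independent statement is $(\lambda_A\lambda_B\lambda_C)^3=1$.

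For sufficiency I would reverse this analysis. The computation above shows that in any decomposition the mirrors are forced to be $V_1=\langle u_{12},u_{31}\rangle$, $V_2=\langle u_{12},u_{23}\rangle$, $V_3=\langle u_{23},u_{31}\rangle$, where $u_{12},u_{23},u_{31}$ are eigenvectors of $A,B,C$ for the chosen eigenvalues. So, starting from an irreducible triple with $(\lambda_A\lambda_B\lambda_C)^3=1$, I would first fix lifts together with a common cube root so that $\lambda_A\lambda_B\lambda_C=1$ exactly, and solve the resulting consistent system $\mu_1\mu_2^{-1}=\lambda_A$, $\mu_2\mu_3^{-1}=\lambda_B$, $\mu_3\mu_1^{-1}=\lambda_C$ for the reflection scalars $\mu_i$. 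I would then take the three eigenvectors $u_{12},u_{23},u_{31}$, use irreducibility to ensure that they are linearly independent and that the planes $V_i$ they span are nondegenerate of signature $(1,1)$, and finally define $R_1,R_2,R_3$ to be the complex reflections with these mirrors and scalars.

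The crux is then to verify that these reflections actually recompose to the given triple, i.e.\ that $R_1R_2^{-1}=A$ and cyclically. Setting $R_2=A^{-1}R_1$ and $R_3=CR_1$, this amounts to checking that $A^{-1}R_1$ and $CR_1$ are the prescribed reflections; writing $R_1=\mu_1\,\mathrm{Id}+(\nu_1-\mu_1)P_1$ with $P_1$ the orthogonal projection onto $V_1^\perp$, the condition on $A^{-1}R_1$ becomes the rank-one requirement that $A^{-1}R_1-\mu_2\,\mathrm{Id}$ drop to rank one. The two maps already agree on $u_{12}$ by construction, so the real content is their agreement on the remaining directions $u_{23}$ and $V_2^\perp$; here I expect to exploit the relation $A=C^{-1}B^{-1}$ together with the eigenvalue identities $\mu_i\mu_j^{-1}=\lambda$ to match the off-diagonal terms. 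Showing that the eigenvalue condition and irreducibility are precisely what make this forced mirror configuration consistent is where I expect the main difficulty to lie, and where irreducibility is indispensable: it rules out the degenerate reducible case in which the three mirrors become concurrent and the construction collapses.
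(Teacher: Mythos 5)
Your necessity argument is correct, and it is a cleaner, more linear-algebraic route than the paper's: the common eigenvector $u\in V_1\cap V_2$ is what the paper calls $P_A$, and your telescoping product replaces the paper's cross-ratio computation in the implication $i)\Rightarrow iii)$ of Theorem \ref{caractdecomp}. You have also correctly identified the rigid shape of any decomposition (mirrors spanned by pairs of eigenvectors, rotation factors determined up to a common unit by $\mu_i\mu_j^{-1}=\lambda$).

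The sufficiency direction, however, contains a genuine gap: the single step that carries the whole content of the theorem --- that $A^{-1}R_1$ actually \emph{is} the prescribed reflection, i.e.\ that $A^{-1}R_1-\mu_2\,\mathrm{Id}$ has rank one --- is precisely the step you defer (``here I expect to exploit\dots'', ``where I expect the main difficulty to lie''). Knowing that $A^{-1}R_1$ has $u_{12}$ as a $\mu_2$-eigenvector says almost nothing about the rest of the matrix; you would need to prove, say, $R_1u_{23}=\mu_2\,Au_{23}$, and that identity is exactly where the hypothesis $(\lambda_A\lambda_B\lambda_C)^3=1$ must enter. The paper avoids this verification entirely by a different mechanism: it shows that the eigenvalue condition is equivalent to the cross-ratio $X(P_A,P_B,P_C,A^{-1}P_C)$ being $1$, deduces from Proposition \ref{reflexistence} that the triples $(P_A,P_B,P_C)$ and $(P_A,P_B,A^{-1}P_C)$ are congruent under $\operatorname{PU}(2,1)$ (this is where the condition is consumed), obtains an isometry $R_2$ fixing $P_A,P_B$, and then uses Lemma \ref{fppairs} (``two non-orthogonal, non-boundary fixed points implies complex reflection'') three times --- for $R_2$, for $R_1:=AR_2$, and for $R_3^{-1}:=R_2^{-1}B$ --- so that no product identity ever needs to be checked by hand. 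Without Proposition \ref{reflexistence} and Lemma \ref{fppairs}, or an explicit computation replacing them, your construction only produces candidate reflections, not a decomposition. Two smaller inaccuracies: a complex reflection in a point has a mirror of signature $(2,0)$, so your blanket signature-$(1,1)$ assumption silently excludes part of the paper's definition of decomposability (the hyperbolic and mixed cases); and irreducibility of the triple does not by itself make an arbitrary choice of one eigenvector per element pairwise non-orthogonal --- the paper's hypothesis of an ``irreducible configuration'' is what guarantees that the spans $V_i$ are nondegenerate and that the reflections exist at all.
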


The examples obtained by taking the even subgroup of a representation of $\Gamma_{p,q,r}$ satisfy this condition, with $\lambda_A=\lambda_B=\lambda_C=1$. If the mirrors of the complex reflections intersect pairwise inside $\mathbb{H}_{\CC}^2$, they form a complex hyperbolic triangle associated to the triple $(A,B,C)$. We then make a link with the $\operatorname{SU}(2,1)$-character variety of the free group of rank two, denoted $\chi_{\operatorname{SU}(2,1)}(\mathbb{F}_2)$ (we recall the precise definitions in Section \ref{charvarsection}). The above theorem implies that a relative component of $\chi_{\operatorname{SU}(2,1)}(\mathbb{F}_2)$ that contains a decomposable triple contains only decomposable triples. We provide a parameterization of such components by geometric parameters. More precisely, we show that they are parameterized by the angle $\theta_1$ of the complex reflection $R_1$ and the angular invariant $\alpha$ of the complex hyperbolic triangle.

We then reinterpret these results in terms of the \textit{momentum map}.

\begin{Defi*}
	Let $\cC_1,\cC_2$ be fixed conjugacy classes in $\operatorname{PU}(2,1)$, and $\cG$ be the set of conjugacy classes in $\operatorname{PU}(2,1)$. Denote by $[A]$ the conjugacy class of an element $A$. The \textit{momentum map} is defined as
	$$\begin{array}{ccccc}
	\mu & : & \cC_1\times \cC_2 & \to & \cG \\
	& & (A,B) & \mapsto & [AB]. \\
	\end{array}$$
\end{Defi*}

In \cite{Pau}, Paupert proposes a method to describe the image of $\mu$ when $\cC_1,\cC_2$ are elliptic conjugacy classes (see \cite{FalWen} for the case where the classes are loxodromic). His starting point is to identify the image of the subset of $\cC_1\times \cC_2$ formed by \textit{reducible pairs} $(A,B)$ (i.e.\ $A$ and $B$ have a common eigenspace). This subspace is one-dimensional, and known as the \textit{reducible skeleton}.

Now, Question 2 amounts to describing the fiber of the momentum map above a conjugacy class $\cC_3$ of order $r$, when $\cC_1,\cC_2$ have order $p,q$ and the three conjugacy classes satisfy the decomposability condition. Our numerical condition for the decomposability of a triple may be translated in this setting as the following result (Theorem \ref{fiber}):

\begin{Theo*}
	An irreducible triple $(A,B,(AB)^{-1})$ is decomposable if and only if the conjugacy class of $(AB)^{-1}$ lies on the reducible skeleton. As a consequence, the whole fiber of $\mu$ above the conjugacy class of $(AB)^{-1}$ is decomposable.
\end{Theo*}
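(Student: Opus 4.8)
The plan is to derive the statement entirely from the eigenvalue criterion of Theorem \ref{caractdecomp}, by translating both ``decomposable'' and ``on the reducible skeleton'' into the same condition on eigenvalues of lifts in $\operatorname{SU}(2,1)$. First I would fix lifts $\tilde A,\tilde B\in\operatorname{SU}(2,1)$ and set $\tilde C=(\tilde A\tilde B)^{-1}$; this is automatically a lift of $C=(AB)^{-1}$ satisfying $\tilde A\tilde B\tilde C=I$, so no central ambiguity is lost (recall the center of $\operatorname{SU}(2,1)$ is $\{I,\omega I,\omega^2 I\}$ with $\omega=e^{2\pi i/3}$). Writing $\{\alpha_i\}$, $\{\beta_j\}$ for the eigenvalues of $\tilde A,\tilde B$ and $\{\gamma_k\}$ for those of $\tilde A\tilde B$, the eigenvalues of $\tilde C$ are the $\{\gamma_k^{-1}\}$.

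Next I would rewrite the decomposability criterion in these terms. By Theorem \ref{caractdecomp}, the irreducible triple $(A,B,(AB)^{-1})$ is decomposable iff there are indices with $(\alpha_i\beta_j\gamma_k^{-1})^3=1$. Cubing out, this is equivalent to $\gamma_k^3=(\alpha_i\beta_j)^3$, i.e. to $\gamma_k=\omega^m\alpha_i\beta_j$ for some $m\in\{0,1,2\}$. In words: some eigenvalue of $AB$ equals a product of an eigenvalue of $A$ by an eigenvalue of $B$, up to a cube root of unity; equivalently, $(AB)^{-1}$ has an eigenvalue of the form $\omega^{-m}(\alpha_i\beta_j)^{-1}$. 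The cube-root ambiguity is exactly the indeterminacy introduced when passing from $\operatorname{SU}(2,1)$-eigenvalues to $\operatorname{PU}(2,1)$-conjugacy classes, so the condition is well defined on the class $[(AB)^{-1}]$.

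Then I would match this with the reducible skeleton. For a reducible pair $(A',B')\in\cC_1\times\cC_2$ with a common eigenvector $v$, say $A'v=\alpha_iv$ and $B'v=\beta_jv$, the product satisfies $A'B'v=\alpha_i\beta_jv$, so $\alpha_i\beta_j$ is an eigenvalue of $A'B'$ and $(\alpha_i\beta_j)^{-1}$ is an eigenvalue of the corresponding third element $(A'B')^{-1}$. Hence the classes lying on the reducible skeleton are precisely those admitting an eigenvalue of this product form (modulo the central $\omega$), which is exactly the condition obtained above. The necessity direction (reducible $\Rightarrow$ eigenvalue relation $\Rightarrow$ image lies on the skeleton) is immediate; for the converse I would invoke the explicit parameterization of the reducible skeleton recalled earlier (following Paupert), matching not only the distinguished eigenvalue $\alpha_i\beta_j$ but also the remaining pair of eigenvalues and the Hermitian type of $v$, along the one-parameter family that the skeleton traces in $\cG$.

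Finally, the consequence about the whole fiber would follow formally: decomposability is detected by Theorem \ref{caractdecomp} solely through the eigenvalues of $A,B,(AB)^{-1}$, hence only through the three classes $\cC_1,\cC_2,\cC_3=[(AB)^{-1}]$, all constant on the fiber $\mu^{-1}(\cC_3)$; so either every irreducible triple in the fiber is decomposable or none is, and the equivalence just proved decides which case occurs by testing whether $\cC_3$ meets the skeleton (reducible members of the fiber, if any, being treated directly). I expect the main obstacle to be precisely the converse in the skeleton step: upgrading the numerical relation $\gamma_k=\omega^m\alpha_i\beta_j$ to the assertion that the \emph{full} class $[(AB)^{-1}]$ genuinely sits on the skeleton requires the geometric description of the reducible locus (the admissible range of the two free eigenvalues and the sign of the common eigenvector), not merely the single-eigenvalue identity, together with careful bookkeeping of the cube roots coming from the center of $\operatorname{SU}(2,1)$.
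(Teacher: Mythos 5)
Your overall strategy coincides with the paper's: reduce decomposability to the eigenvalue identity of Theorem \ref{caractdecomp}, identify that identity with membership in the reducible skeleton, and observe that the criterion depends only on the three conjugacy classes, hence is constant along the fiber. The direction you do carry out (a common eigenvector of a reducible pair forces the product to have the eigenvalue $\alpha_i\beta_j$, whence the identity and, via Theorem \ref{caractdecomp}, decomposability of every irreducible pair above that class) is correct and is essentially the computation of the paper's Lemma \ref{squelette--decomp}, there written in angle-pair coordinates, where the identity becomes $3(\alpha_3+\beta_3-\gamma_3)\equiv 0\ [2\pi]$, i.e.\ the equation of the line of slope $-1$ through the two totally reducible vertices.

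The gap is exactly where you predicted it, and your proposal does not close it. The identity $\gamma_k=\omega^m\alpha_i\beta_j$ only places the class of $AB$ on that full line; the skeleton is the subsegment actually realized by reducible pairs (in the spherical case, the image of a product of two fixed $\operatorname{SU}(2)$-classes, which is in general a proper arc between the totally reducible vertices). So ``decomposable $\Rightarrow$ eigenvalue identity'' does not by itself give ``decomposable $\Rightarrow$ on the skeleton'': one must produce an actual reducible pair in the fiber. The paper does this not by inspecting Paupert's admissible range, as you suggest, but by a deformation argument (Lemmas \ref{deform} and \ref{deform'}): keeping the rotation factors and the complex angles of the mirrors fixed, one varies the angular invariant $\alpha$ of the mirror triangle until $\cos(\alpha)$ attains the bound of Proposition \ref{triangleclassif}, at which point the three mirrors acquire a common point (or a common orthogonal complex line) and the pair becomes reducible with unchanged conjugacy classes. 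The same deformation, combined with Lemma \ref{redunique} (all reducible pairs in a given fiber are conjugate), is also what shows that the reducible members of the fiber are decomposable --- the point you dismiss as ``treated directly'', for which Theorem \ref{caractdecomp} is unavailable since it requires an irreducible configuration. As it stands, your argument establishes the ``if'' half and the decomposability of the irreducible members of the fiber, but not the ``only if'' half nor the claim for the whole fiber.
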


Finally, we apply our results to representations of $(p,q,r)$-groups in Section \ref{trigroups}. Our work enables us to parameterize geometrically the $\RR$-Fuchsian component of $\chi_{\operatorname{SU}(2,1)}(\Gamma_{p,q,r}^{(2)})$ (i.e.\ the deformation component of the embedding $\Gamma_{p,q,r}^{(2)}\to\operatorname{PO}(2,1)\subset\operatorname{PU}(2,1)$). The results of this paper have the following consequence (Theorem \ref{RFuchs}):

\begin{Theo*}
	The $\RR$-Fuchsian component of $\chi_{\operatorname{SU}(2,1)}(\Gamma_{p,q,r}^{(2)})$ is entirely composed of decomposable representations. Besides, it contains a unique reducible point and is topologically a sphere. In particular, it is compact.
\end{Theo*}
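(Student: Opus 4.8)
The plan is to combine the numerical criterion of Theorem \ref{caractdecomp} with the geometric parameterization by $(\theta_1,\alpha)$ to control simultaneously the decomposability and the global topology of the component. The first step is to observe that along any connected component of $\chi_{\operatorname{SU}(2,1)}(\Gamma_{p,q,r}^{(2)})$ the conjugacy classes of $A,B,C$ are constant. Indeed, $A,B,C$ have finite orders $p,q,r$, so the eigenvalues of any continuous family of lifts in $\operatorname{SU}(2,1)$ are roots of unity of bounded order; being confined to a discrete set while varying continuously, they are locally constant, hence constant on the component. Thus the $\RR$-Fuchsian component coincides with a single relative component, with peripheral classes $\cC_1,\cC_2,\cC_3$ equal to those of the $\RR$-Fuchsian representation, and we may apply the momentum-map description of Theorem \ref{fiber}.

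Second, I would establish decomposability everywhere. At the $\RR$-Fuchsian representation the triple $(A,B,C)$ is the even part of a $\operatorname{PO}(2,1)$-representation, hence decomposable with $\lambda_A=\lambda_B=\lambda_C=1$, so that $(\lambda_A\lambda_B\lambda_C)^3=1$. Since the eigenvalues are constant on the component by the first step, this numerical identity persists at every point; by Theorem \ref{caractdecomp} every irreducible representation of the component is therefore decomposable. Equivalently, in the language of Theorem \ref{fiber}, the class $\cC_3$ of $C=(AB)^{-1}$ lies on the reducible skeleton, so the entire fiber of $\mu$ above $\cC_3$ — which is exactly our component — is decomposable.

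Third, I would pin down the reducible locus. A reducible representation with these fixed, finite-order, elliptic conjugacy classes is simultaneously diagonalizable, and the constraint $ABC=1$ forces the chosen eigenvalues of $A,B,C$ to multiply to $1$ along each common eigenline. With the eigenvalue multisets fixed, this matching admits a unique solution compatible with $ABC=1$ up to global conjugation, giving exactly one reducible point; geometrically, this is the single intersection of the fiber with the reducible skeleton. Verifying that this intersection is genuinely one point — rather than several — is where the explicit eigenvalue bookkeeping attached to the $(p,q,r)$ data enters.

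Finally, for the topology I would read off the shape of the parameter space from the geometric parameterization by the angle $\theta_1$ of $R_1$ and the angular invariant $\alpha$. The irreducible decomposable representations form a two-dimensional region in the $(\theta_1,\alpha)$-coordinates, whose boundary is where the complex hyperbolic triangle degenerates — the three mirrors becoming concurrent — which is precisely the reducibility condition. Since every boundary parameter yields the same reducible representation identified in the third step, the component is obtained from a disk by collapsing its whole boundary circle to a point, hence is homeomorphic to $D^2/\partial D^2\cong S^2$, the collapsed point being the unique reducible representation; compactness is then immediate. The main obstacle is this last step: rigorously controlling the boundary behaviour of the $(\theta_1,\alpha)$-parameterization — determining the exact range of the parameters and showing that the whole boundary degenerates to the single reducible point, so that the quotient is a sphere and not, say, a disk or a cylinder — together with confirming, as in the third step, that there is precisely one reducible point.
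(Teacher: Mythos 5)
Your overall route---verifying $(\lambda_A\lambda_B\lambda_C)^3=1$ at the $\RR$-Fuchsian representation and propagating decomposability through Theorem \ref{caractdecomp} (equivalently Theorem \ref{fiber}), then reading off the topology from the $(\theta,\alpha)$-parameterization with the boundary collapsing onto the reducible point---is the same as the paper's, and your first two steps are correct. The genuine gap is in your third step: a reducible triple in the sense of Definition \ref{irred} shares a common eigenvector but is \emph{not} simultaneously diagonalizable in general. The reducible representation of this component is the one whose three mirrors pass through a common point $x\in\mathbb{H}_\CC^2$ (Lemma \ref{deform}); there $A,B,C$ all fix $x$ but act on its tangent space as a generally non-commuting pair in $\operatorname{U}(2)$, so there are no ``common eigenlines'' along which to match eigenvalues. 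Your bookkeeping would instead locate the \emph{totally} reducible vertices, of which there are generically two, and which generically do not even lie in the fiber above $\cC_3$ (one would need $1/r=1/p\pm 1/q$). The correct uniqueness argument is Lemma \ref{redunique}: put any spherical reducible pair of the fiber in block form $1\oplus 2$ and use that a pair $(A',B')$ in $\operatorname{SU}(2)$ is determined up to conjugacy by $\operatorname{Tr}(A')$, $\operatorname{Tr}(B')$, $\operatorname{Tr}(A'B')$.

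On the topology you correctly identify what remains to be done but do not do it, and this is where the paper's explicit computation enters. Since all three negative-type eigenvalues equal $1$, one gets $\theta_1=\theta_2=\theta_3=\theta$, and the formulas of Section \ref{tracesanddecomp} give $\sin^2(\phi_i)=\bigl(1-\cos(2\pi/\cdot)\bigr)/\bigl(2\sin^2(\theta/2)\bigr)$ explicitly, forcing $\theta\in\,]2\pi/m,2\pi-2\pi/m[$ with $m=\min(p,q,r)$ and $\alpha$ in the interval of Proposition \ref{triangleclassif}. From these formulas one can check that the $\alpha$-interval degenerates as $\theta$ approaches the endpoints and that the boundary $\cos\alpha=\delta(\theta)$ is exactly the triple-intersection locus, so the closure of the parameter domain is a disk whose whole boundary maps to the single reducible character of Lemma \ref{redunique}, giving $D^2/\partial D^2\cong S^2$. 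Without these explicit formulas your argument cannot exclude that the domain is, say, a cylinder in the $(\theta,\alpha)$-coordinates (recall $\alpha$ is a circle parameter), so this verification is a necessary part of the proof rather than an optional refinement.
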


We construct explicitly this parameterization in the case of the $(3,3,4)$-group. Similar computations appear in an unpublished note by Thistlethwaite \cite{Thi}. In this paper, we show how this method also gives way to parameterizing non-$\RR$-Fuchsian components of $\chi_{\operatorname{SU}(2,1)}(\Gamma_{p,q,r}^{(2)})$. We give an example with the $(4,5,20)$-group.

The results of this paper are part of a PhD thesis under the supervision of Antonin Guilloux and Pierre Will. We thank Arnaud Maret, Elisha Falbel and Julien Paupert for the useful discussions. This work is supported by the French National Research Agency in the framework of the « France 2030 » program  (ANR-15-IDEX-0002) and by the LabEx PERSYVAL-Lab (ANR-11-LABX-0025-01).
	
	\section{The complex hyperbolic space of dimension 2}\label{comphyp}
	
	\subsection{Generalities}
	
	We start by reviewing classical notions about the complex hyperbolic space of dimension two $\mathbb{H}_\CC^2$, highlighting the objects we will be studying next. General references are \cite{Gol} or \cite{Par}.
	
	Let us consider the following Hermitian form of signature $(2,1)$, defined for any $z,w$ in $\CC^3$ by
	$\langle z,w\rangle=z_1\overline{w_1}+z_2\overline{w_2}-z_3\overline{w_3}$. Its matrix in the canonical basis is
	$$J=\begin{pmatrix}
	1 & 0 & 0 \\
	0 & 1 & 0 \\
	0 & 0 & -1 
	\end{pmatrix}.$$
	
	We will use the ball model for $\mathbb{H}_\CC^2$, which is defined as follows:
	
	\begin{Defi}
		Let $\mathbb{H}_\CC^2$ be the projectivization of the negative cone $V^-=\{z\in\CC^3|\langle z,z\rangle<0\}$. Its boundary $\partial\mathbb{H}_\CC^2$ is the projectivization of the null cone $V^0=\{z\in\CC^3|\langle z,z\rangle=0\}$. We denote by $\overline{\mathbb{H}_\CC^2}$ the union $\mathbb{H}_\CC^2\cup\partial\mathbb{H}_\CC^2$.
	\end{Defi}

\begin{Rema}
	We say that an element of $V^-$ (respectively $V^+$, respectively $V^0$) is of \textit{negative} (respectively \textit{positive}, respectively \textit{null}) \textit{type}.
\end{Rema}
	
	We will usually work in the affine chart $\{z_3=1\}$. This chart identifies $\mathbb{H}_\CC^2$ with the unit ball of $\CC^2$. For an element $z=(z_1,z_2)$ in $\mathbb{H}_\CC^2$, we shall denote by $\tilde{z}=(z_1,z_2,1)\in\CC^3$ its lift to this affine chart. The following formula defines a metric $d$ on $\mathbb{H}_\CC^2$ (see Section 3.1.6 in \cite{Gol}):
	\begin{equation*}
		\forall z,w \in \mathbb{H}^2_\CC, \  \cosh^2\left(\frac{d(z,w)}{2}\right)=\frac{\langle\tilde{z},\tilde{w}\rangle\langle\tilde{w},\tilde{z}\rangle}{\langle\tilde{z},\tilde{z}\rangle\langle\tilde{w},\tilde{w}\rangle}.
	\end{equation*}
	
	Let us recall that $\operatorname{U}(2,1)$ is the subgroup of $\operatorname{GL}_3(\CC)$ that preserves the Hermitian form $\langle\cdot,\cdot\rangle$. We denote by $\operatorname{PU}(2,1)$ the quotient of this group by its center (which is the set of homotheties). The group $\operatorname{PU}(2,1)$ acts naturally on $\mathbb{P}(\CC^3)$, preserving $V^-$ and $V^0$. Therefore, it acts on $\mathbb{H}_\CC^2$. The following classical theorem describes the main properties of this action.
	
	\begin{Theo} \label{isomclassif}
		\emph{\cite{Par}} $\operatorname{PU}(2,1)$ is the group of holomorphic isometries of $\mathbb{H}_\CC^2$, and acts transitively. Moreover, the stabilizer of a point in $\mathbb{H}_\CC^2$ under the action of $\operatorname{PU}(2,1)$ is $\mathbb{P}(\operatorname{U}(2)\times\operatorname{U}(1))$.
	\end{Theo}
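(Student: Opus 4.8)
The plan is to establish the three assertions separately, dispatching first the elementary fact that $\operatorname{PU}(2,1)$ acts by holomorphic isometries, then transitivity and the stabilizer computation (both pure linear algebra), and finally the harder converse that these exhaust all holomorphic isometries. For the easy inclusion I would argue as follows. Every $g\in\operatorname{U}(2,1)$ preserves $\langle\cdot,\cdot\rangle$ by definition, hence preserves $V^-$ and $V^0$ and descends to a well-defined self-map of $\mathbb{P}(\CC^3)$ stabilising both $\mathbb{H}_\CC^2$ and $\partial\mathbb{H}_\CC^2$; this map is holomorphic, being the projectivisation of a $\CC$-linear map. To see that it is an isometry, I note that the right-hand side of the defining formula for $\cosh^2(d(z,w)/2)$ is homogeneous of degree zero separately in $\tilde{z}$ and in $\tilde{w}$, so it depends only on the points $z,w$ and not on the chosen lifts, and it is manifestly invariant under the substitution $(\tilde{z},\tilde{w})\mapsto(g\tilde{z},g\tilde{w})$ since $g$ preserves $\langle\cdot,\cdot\rangle$. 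Thus $d(gz,gw)=d(z,w)$. As scalar matrices act trivially on $\mathbb{P}(\CC^3)$, the action factors through $\operatorname{PU}(2,1)$, yielding an injection of $\operatorname{PU}(2,1)$ into the holomorphic isometry group.

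Next I would fix the base point $o=[e_3]$ with $e_3=(0,0,1)$, noting $\langle e_3,e_3\rangle=-1$. Given any $[v]\in\mathbb{H}_\CC^2$, normalise its lift so that $\langle v,v\rangle=-1$; by Witt's extension theorem for the nondegenerate Hermitian form $\langle\cdot,\cdot\rangle$, the isometry $e_3\mapsto v$ of these two negative lines extends to an element of $\operatorname{U}(2,1)$, which proves transitivity. For the stabiliser of $o$, an element $g\in\operatorname{U}(2,1)$ fixes $[e_3]$ if and only if $g(e_3)\in\CC e_3$, and since $g$ preserves the form with $\langle e_3,e_3\rangle=-1$ one gets $g(e_3)=\mu e_3$ with $|\mu|=1$. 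Being unitary, $g$ also preserves the orthogonal complement $e_3^{\perp}=\{z_3=0\}$, on which $\langle\cdot,\cdot\rangle$ restricts to the standard positive-definite Hermitian form of $\CC^2$, so $g$ acts there through $\operatorname{U}(2)$. This identifies the stabiliser in $\operatorname{U}(2,1)$ with the block-diagonal group $\operatorname{U}(2)\times\operatorname{U}(1)$, and quotienting by the scalars gives $\mathbb{P}(\operatorname{U}(2)\times\operatorname{U}(1))$.

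The remaining and genuinely delicate step is to show that any holomorphic isometry $f$ of $\mathbb{H}_\CC^2$ already lies in $\operatorname{PU}(2,1)$; this is where I expect the main obstacle. Using transitivity I may compose $f$ with an element of $\operatorname{PU}(2,1)$ and assume $f(o)=o$. The differential $df_o$ is then a $\CC$-linear unitary map of the tangent space $T_o\mathbb{H}_\CC^2\cong\CC^2$ for the Hermitian metric, and by the stabiliser computation there is a unique element $h\in\mathbb{P}(\operatorname{U}(2)\times\operatorname{U}(1))$ with $dh_o=df_o$. The crux is a rigidity principle: a holomorphic self-map of $\mathbb{H}_\CC^2$ fixing $o$ is determined by its differential at $o$. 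In the ball model this is exactly Cartan's uniqueness theorem for holomorphic self-maps of a bounded domain fixing an interior point, applied to $h^{-1}\circ f$, which has identity differential at $o$ and is therefore the identity. Hence $f=h\in\operatorname{PU}(2,1)$, finishing the argument. I emphasise that transitivity and the stabiliser are linear algebra, whereas ruling out extra, non-linear holomorphic isometries requires this analytic input; the same conclusion can alternatively be read off from the general theory of isometries of the Hermitian symmetric space $\mathbb{H}_\CC^2$.
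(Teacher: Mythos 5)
The paper offers no proof of this statement: it is quoted as a classical result with a reference to Parker's notes, so there is nothing internal to compare your argument against. That said, your proposal is a correct and complete rendition of the standard proof. The decomposition into (a) the easy inclusion via invariance of the cross-ratio-type distance formula and holomorphy of projectivised linear maps, (b) transitivity via Witt extension, (c) the block-diagonal stabiliser computation, and (d) the converse via Cartan's uniqueness theorem applied to $h^{-1}\circ f$ is exactly how the cited sources proceed. You correctly isolate (d) as the only non-elementary step; the identification of the full stabiliser with the unitary group of the tangent space at the origin (via $(U,\mu)\mapsto\mu^{-1}U$, with kernel the scalars) is what makes the reduction to Cartan legitimate. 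The only points you pass over quickly are that only scalar matrices act trivially on $\mathbb{H}_\CC^2$ (needed for injectivity of $\operatorname{PU}(2,1)$ into the isometry group) and that a distance-preserving holomorphic map has unitary differential for the underlying Hermitian metric; both are routine and do not affect the validity of the argument.
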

	
	Just like in real hyperbolic geometry, any isometry belongs to one of the three (mutually exclusive) following classes: it is \textit{loxodromic} if it fixes exactly two points in $\partial\mathbb{H}_\CC^2$ and none in $\mathbb{H}_\CC^2$, \textit{parabolic} if it fixes exactly one point in $\partial\mathbb{H}_\CC^2$ and none in $\mathbb{H}_\CC^2$, and \textit{elliptic} if it fixes at least one point in $\mathbb{H}_\CC^2$.

One can refine the elliptic case by looking at the possibility of a repeated eigenvalue.
If the representatives in $\operatorname{U}(2,1)$ of an isometry are diagonalisable with eigenvalues that are pairwise distinct, we say that this element is \textit{regular}; when two eigenvalues are equal we call it \textit{special}.
	
	\begin{Rema}\label{ellip}
		Any elliptic element is diagonalisable, and admits a lift in $\operatorname{U}(2,1)$ conjugate to a matrix of the form 
		$$E(\alpha_1,\alpha_2)=\begin{pmatrix}
		e^{i\alpha_1} & 0 & 0 \\
		0 & e^{i\alpha_2} & 0 \\
		0 & 0 & 1 
		\end{pmatrix},$$
		with $0\leq\alpha_2\leq\alpha_1<2\pi$. This element is \emph{regular elliptic} when $0<\alpha_2<\alpha_1<2\pi$. Otherwise it is \emph{special elliptic}. Besides, it preserves a copy of $\mathbb{H}_\RR^2$ if and only if $(\alpha_1,\alpha_2)=(\alpha,2\pi-\alpha)$ for some $\alpha$.
	\end{Rema}

	\begin{Defi}\label{anglepair}
		The pair $(\alpha_1,\alpha_2)$ is called the \textit{angle pair} of the element $E(\alpha_1,\alpha_2)$.
	\end{Defi}
	
	The conjugacy class of an elliptic element in $\operatorname{PU}(2,1)$ is uniquely determined by its angle pair. Another useful tool is the box product:
	
	\begin{Defi}
		For $u,v\in\CC^{3}$, their \textit{box product} is
		$$u\boxtimes v=J\cdot\overline{u\wedge v},$$
		where $J$ is the matrix of the Hermitian form in the canonical basis and $\wedge$ is the usual exterior product in $\CC^{3}$.
	\end{Defi}
	
	Note that $u\boxtimes v$ is orthogonal to $u$ and $v$ for the Hermitian form $\langle\cdot,\cdot\rangle$.
	
	\begin{Defi}\label{crossratio}
		We define the following cross-ratio for $a,b,c,d\in\mathbb{P}(\CC^3)$:
		$$X(a,b,c,d)=\frac{\langle\tilde{c},\tilde{a}\rangle\langle\tilde{d},\tilde{b}\rangle}{\langle\tilde{c},\tilde{b}\rangle\langle\tilde{d},\tilde{a}\rangle}.$$
	\end{Defi}
	
The cross-ratio $X(a,b,c,d)$ can be used to decide whether the complex lines $(ab)$ and $(cd)$ are orthogonal (see Definition \ref{comprefl} below).
	
	\begin{Prop}\label{crossratio=1}
		Assume that $a,b,c,d\in\mathbb{P}(\CC^3)$ are pairwise non orthogonal to each other. The complex lines $(ab)$ and $(cd)$ are orthogonal if and only if we have $X(a,b,c,d)=1$.
	\end{Prop}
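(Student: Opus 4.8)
The plan is to reduce the orthogonality of the two complex lines to a purely algebraic identity between Hermitian products, and then to recognize the cross-ratio. First I would recall that the complex line $(ab)$ is the projectivization of the plane $\operatorname{span}_\CC(\tilde a,\tilde b)\subset\CC^3$, and that its \emph{polar vector} — the generator of the Hermitian-orthogonal complement of this plane — is exactly the box product $\tilde a\boxtimes\tilde b$, which is orthogonal to both $\tilde a$ and $\tilde b$ as noted just after the definition of $\boxtimes$. Two complex lines are orthogonal precisely when their polar vectors are orthogonal for $\langle\cdot,\cdot\rangle$: the condition $\langle n_1,n_2\rangle=0$ forces each polar vector to lie in the plane defining the other line. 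Hence the statement to prove becomes the equivalence
\[
\langle\tilde a\boxtimes\tilde b,\ \tilde c\boxtimes\tilde d\rangle=0\iff X(a,b,c,d)=1 .
\]

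The key tool is a Lagrange-type identity for the box product. Writing $u\boxtimes v=J\,\overline{u\wedge v}$ and using $J^2=\mathrm{Id}$ together with the classical Lagrange identity for the exterior product in $\CC^3$, one obtains
\[
\langle u\boxtimes v,\ w\boxtimes x\rangle=\langle x,u\rangle\langle w,v\rangle-\langle w,u\rangle\langle x,v\rangle .
\]
I would verify this by a direct coordinate computation; since both sides are conjugate-linear in $u,v$ and linear in $w,x$, it suffices to check it on the basis vectors, the signs being fixed by the $-1$ in $J$. Applying it with $u=\tilde a,\ v=\tilde b,\ w=\tilde c,\ x=\tilde d$ yields
\[
\langle\tilde a\boxtimes\tilde b,\ \tilde c\boxtimes\tilde d\rangle=\langle\tilde d,\tilde a\rangle\langle\tilde c,\tilde b\rangle-\langle\tilde c,\tilde a\rangle\langle\tilde d,\tilde b\rangle .
\]

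Finally, the orthogonality condition $\langle\tilde a\boxtimes\tilde b,\tilde c\boxtimes\tilde d\rangle=0$ becomes $\langle\tilde c,\tilde a\rangle\langle\tilde d,\tilde b\rangle=\langle\tilde c,\tilde b\rangle\langle\tilde d,\tilde a\rangle$. Here the hypothesis that $a,b,c,d$ are pairwise non-orthogonal is exactly what guarantees that none of the four Hermitian products $\langle\tilde c,\tilde a\rangle,\langle\tilde d,\tilde b\rangle,\langle\tilde c,\tilde b\rangle,\langle\tilde d,\tilde a\rangle$ vanishes; the cross-ratio is then well defined and we may divide to rewrite the equality as $X(a,b,c,d)=1$, which settles both implications simultaneously. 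I expect the only delicate point to be the Lagrange-type identity: one must keep track of the sign conventions coming from the signature $(2,1)$ (the factor $J$) and confirm that the ordering of the arguments in the two box products matches the ordering appearing in the definition of $X$. The geometric claim that orthogonality of polar vectors is equivalent to orthogonality of the complex lines is standard, but I would recall it explicitly to justify the first reduction.
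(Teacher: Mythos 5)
Your argument is correct and is essentially the paper's own proof: both reduce orthogonality of the complex lines to the vanishing of $\langle\tilde a\boxtimes\tilde b,\tilde c\boxtimes\tilde d\rangle$, invoke the same Lagrange-type identity $\langle\tilde a\boxtimes\tilde b,\tilde c\boxtimes\tilde d\rangle=\langle\tilde d,\tilde a\rangle\langle\tilde c,\tilde b\rangle-\langle\tilde c,\tilde a\rangle\langle\tilde d,\tilde b\rangle$, and then use the pairwise non-orthogonality hypothesis to divide and obtain $X(a,b,c,d)=1$. You simply spell out the verification of the identity and the polar-vector reduction, which the paper leaves implicit.
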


	\begin{proof}
		We have:
		$$\langle\tilde{a}\boxtimes\tilde{b},\tilde{c}\boxtimes\tilde{d}\rangle=\langle\tilde{d},\tilde{a}\rangle\langle\tilde{c},\tilde{b}\rangle-\langle\tilde{c},\tilde{a}\rangle\langle\tilde{d},\tilde{b}\rangle.$$
		Now, the complex lines $(ab)$ and $(cd)$ are orthogonal if an only if $\langle\tilde{a}\boxtimes\tilde{b},\tilde{c}\boxtimes\tilde{d}\rangle=0$, which is equivalent to $X(a,b,c,d)=1$.
	\end{proof}

The following classical result classifies triples of points in $\mathbb{P}(\CC^3)$ modulo the diagonal action of $\operatorname{PU}(2,1)$.

\begin{Prop}\label{reflexistence}
	Let $(p_1,p_2,p_3),(q_1,q_2,q_3)$ be triples in $\mathbb{P}(\CC^3)^3$. There exists an element of $\operatorname{PU}(2,1)$ sending $(p_1,p_2,p_3)$ to $(q_1,q_2,q_3)$ if and only if there exist lifts $(\tilde{p_1},\tilde{p_2},\tilde{p_3}),(\tilde{q_1},\tilde{q_2},\tilde{q_3})\in(\CC^3)^3$ such that
	\begin{itemize}
		\item $\forall i,\langle\tilde{p_i},\tilde{p_i}\rangle=\langle\tilde{q_i},\tilde{q_i}\rangle$;
		\item $\forall i,j,|\langle\tilde{p_i},\tilde{p_j}\rangle|=|\langle\tilde{q_i},\tilde{q_j}\rangle|$;
		\item $\operatorname{arg}(\langle\tilde{p_1},\tilde{p_2}\rangle\langle\tilde{p_2},\tilde{p_3}\rangle\langle\tilde{p_3},\tilde{p_1}\rangle)=\operatorname{arg}(\langle\tilde{q_1},\tilde{q_2}\rangle\langle\tilde{q_2},\tilde{q_3}\rangle\langle\tilde{q_3},\tilde{q_1}\rangle)$.
	\end{itemize}
\end{Prop}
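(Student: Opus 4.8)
The plan is to prove the two implications separately, with essentially all of the content residing in the ``if'' direction.

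For necessity, suppose some $g\in\operatorname{PU}(2,1)$ sends $(p_1,p_2,p_3)$ to $(q_1,q_2,q_3)$. I would fix any lift $\tilde g\in\operatorname{U}(2,1)$ of $g$ together with arbitrary lifts $\tilde{p_i}$, and then simply \emph{define} $\tilde{q_i}:=\tilde g\,\tilde{p_i}$, which is a legitimate lift of $q_i$ because $\tilde g\,\tilde{p_i}$ projects to $g\cdot p_i=q_i$. Since $\tilde g$ preserves the Hermitian form, $\langle\tilde{q_i},\tilde{q_j}\rangle=\langle\tilde{p_i},\tilde{p_j}\rangle$ for all $i,j$, so in fact the entire Gram matrices coincide and the three listed conditions hold a fortiori.

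For sufficiency I would first record how the data transform under rescaling a lift. Replacing $\tilde{p_i}$ by $e^{i\phi_i}\tilde{p_i}$ changes neither $p_i\in\mathbb{P}(\CC^3)$, nor the diagonal entries $\langle\tilde{p_i},\tilde{p_i}\rangle$, nor the moduli $|\langle\tilde{p_i},\tilde{p_j}\rangle|$; it only multiplies $\langle\tilde{p_i},\tilde{p_j}\rangle$ by $e^{i(\phi_i-\phi_j)}$. The key observation is that the triple product $\langle\tilde{p_1},\tilde{p_2}\rangle\langle\tilde{p_2},\tilde{p_3}\rangle\langle\tilde{p_3},\tilde{p_1}\rangle$ is \emph{invariant} under such rescalings, since the phase exponents telescope: $(\phi_1-\phi_2)+(\phi_2-\phi_3)+(\phi_3-\phi_1)=0$. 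This is precisely why the arguments in the third hypothesis constitute the only obstruction beyond the matching of moduli, and it suggests the strategy: use the three hypotheses to make the two Gram matrices \emph{equal}. Assuming first that all off-diagonal products are nonzero, I would look for phases $\phi_1,\phi_2,\phi_3$ solving $\phi_i-\phi_j=\arg\langle\tilde{q_i},\tilde{q_j}\rangle-\arg\langle\tilde{p_i},\tilde{p_j}\rangle$ for each pair. This system in the $\phi_i$ is solvable exactly when its right-hand sides sum to zero around the triangle $(1,2,3)$, and that cyclic condition is nothing but the equality of the arguments of the two triple products, i.e.\ the third hypothesis. After rescaling the $\tilde{p_i}$ accordingly, the first hypothesis equalizes the diagonal entries and the construction equalizes the off-diagonal ones, so the Gram matrices of $(\tilde{p_1},\tilde{p_2},\tilde{p_3})$ and $(\tilde{q_1},\tilde{q_2},\tilde{q_3})$ coincide.

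Finally I would upgrade the equality of Gram matrices to an isometry. In the generic case where $\tilde{p_1},\tilde{p_2},\tilde{p_3}$ form a basis of $\CC^3$, the $\tilde{q_i}$ form a basis with the same Gram matrix, so the change-of-basis matrix $\tilde g$ sending $\tilde{p_i}$ to $\tilde{q_i}$ preserves $\langle\cdot,\cdot\rangle$, hence lies in $\operatorname{U}(2,1)$, and its image $g\in\operatorname{PU}(2,1)$ sends $p_i$ to $q_i$. I expect the main difficulty to be the degenerate cases: when the lifts are linearly dependent, so that $\tilde{p_i}\mapsto\tilde{q_i}$ is only prescribed on a proper (possibly degenerate) subspace, and when some pair is orthogonal, so that the corresponding phase is unconstrained and the linear system above carries fewer equations. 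Both are handled by the same mechanism: the matching Gram matrices ensure that every linear relation among the $\tilde{p_i}$ is mirrored by the $\tilde{q_i}$, so the assignment is a well-defined isometry of the spanned subspace, and one then invokes Witt's extension theorem for the nondegenerate Hermitian space $(\CC^3,\langle\cdot,\cdot\rangle)$ to extend it to an element of $\operatorname{U}(2,1)$.
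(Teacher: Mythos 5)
The paper states Proposition \ref{reflexistence} without proof (it is quoted as a classical fact), so there is no argument of the author's to compare yours against. Your necessity direction is correct, and your treatment of the generic case is the standard one: rescale the lifts by unit phases $e^{i\phi_i}$, note that the triple product is invariant under such rescalings, observe that the system $\phi_i-\phi_j=\arg\langle\tilde{q_i},\tilde{q_j}\rangle-\arg\langle\tilde{p_i},\tilde{p_j}\rangle$ is solvable precisely because the cyclic sum of the right-hand sides vanishes by the third hypothesis, and then read off an element of $\operatorname{U}(2,1)$ from the equality of Gram matrices when the lifts form a basis. All of that is fine, as is your remark that an orthogonal pair simply removes an equation from the system.

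The degenerate case, however, contains a genuine gap. From $\sum_i c_i\tilde{p_i}=0$ and equality of Gram matrices you only get that $\sum_i c_i\tilde{q_i}$ is orthogonal to the span of the $\tilde{q_i}$, i.e.\ lies in the radical of that span; so your claim that ``every linear relation among the $\tilde{p_i}$ is mirrored by the $\tilde{q_i}$'' fails exactly when the span is a \emph{degenerate} subspace (a projective line tangent to $\partial\mathbb{H}_\CC^2$), and then the assignment $\tilde{p_i}\mapsto\tilde{q_i}$ need not be well defined, so Witt's theorem cannot be invoked. Worse, the proposition as literally stated is false there, so no proof can close this case without an added hypothesis. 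Concretely, with $J=\operatorname{diag}(1,1,-1)$ and the null vector $n=e_2+e_3$, the triples of lifts $(e_1,\,e_1+n,\,e_1+2n)$ and $(e_1,\,e_1+n,\,e_1+3n)$ both have the all-ones Gram matrix, so all three bullet conditions hold; yet any $\tilde{g}\in\operatorname{U}(2,1)$ fixing the lines $\CC e_1$ and $\CC(e_1+n)$ has eigenvalues there satisfying $|\lambda_1|=|\lambda_2|=1$ and $\lambda_1\overline{\lambda_2}=1$, hence acts as a scalar on $\operatorname{span}(e_1,n)$ and fixes $[e_1+2n]$, so no element of $\operatorname{PU}(2,1)$ sends the first projective triple to the second. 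Your argument (and the statement) is therefore valid only under the additional assumption that the lifts span $\CC^3$ or a nondegenerate subspace; you should state that hypothesis explicitly rather than appeal to Witt's extension theorem in general.
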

	
	\subsection{Complex reflections}\label{compref}
	
	Let us develop some notions and facts about complex lines of $\mathbb{H}_\CC^2$ that we will use later on. For more details, see Section 5.2 in \cite{Par}.
	
	\begin{Defi}\label{comprefl}
		We call \textit{complex line} the intersection of $\mathbb{H}_\CC^2$ with $\mathbb{P}(W)$, where $W$ is a complex plane in $\CC^3$ that intersects $V^-$. The  one-dimensional subspace $W^\perp\subset\CC^3$ is known as the line of \textit{polar vectors} to the complex line. These vectors are of positive type. We say that a polar vector $c$ is \textit{normalized} if $\langle c,c\rangle=1$. 
	\end{Defi}

\begin{Rema}
	A projective line in $\mathbb{P}(\CC^3)$ may be disjoint from $\mathbb{H}_\CC^2$. It also has a line of polar vectors, which are of negative type.
\end{Rema}

\begin{Exem}
	The standard complex line through the origin is
	$$C_0:=\{(z,0)\in\CC^2| \ |z|^2<1\}=\mathbb{P}(\{z_1,0,z_3\})\cap\mathbb{H}_\CC^2.$$
	$C_0$ is polar to the normalized vector $c_0=(0,1,0)$. The complex line $C_0$ is the image of $\mathbb{H}_\CC^1$ by an isometric embedding into $\mathbb{H}_\CC^2$.
\end{Exem}
	
	\begin{Prop}
		\emph{\cite{Par}} $\operatorname{PU}(2,1)$ acts transitively on the set of complex lines. The stabilizer of a complex line under the action of $\operatorname{PU}(2,1)$ is conjugate to the stabilizer of $C_0$ in $\operatorname{PU}(2,1)$, which is $\mathbb{P}(\operatorname{U}(1,1)\times\operatorname{U}(1))$ (compare with Theorem \ref{isomclassif}).
	\end{Prop}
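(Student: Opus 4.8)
The plan is to reduce everything to linear algebra on $\CC^3$ equipped with the Hermitian form $\langle\cdot,\cdot\rangle$, using the bijection between complex lines and their polar vectors. Recall that a complex line is $\mathbb{P}(W)\cap\mathbb{H}_\CC^2$ for a plane $W$ meeting $V^-$, and that $W^\perp$ is spanned by a single vector $c$ of positive type (Definition \ref{comprefl}). Conversely, any positive-type vector $c$ determines the plane $W=c^\perp$, on which the form has signature $(1,1)$ (one positive direction having been split off from the ambient signature $(2,1)$), so $W$ meets $V^-$ and $\mathbb{P}(W)\cap\mathbb{H}_\CC^2$ is a complex line. Since an element $g\in\operatorname{U}(2,1)$ preserves the form it preserves orthogonality, so it maps the complex line polar to $c$ to the complex line polar to $g\cdot c$; hence the action on complex lines is transported to the action on lines of positive-type vectors.

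For transitivity, first I would normalize: any positive-type vector may be rescaled so that $\langle c,c\rangle=1$, and this rescaling does not change the associated complex line. Given such a normalized $c$, I would extend $\{c\}$ to a basis adapted to the form: since the form is nondegenerate of signature $(2,1)$ and $c$ is a unit positive vector, its orthogonal complement $c^\perp$ carries a form of signature $(1,1)$, so one can choose $e$ with $\langle e,e\rangle=1$ and $f$ with $\langle f,f\rangle=-1$, mutually orthogonal and orthogonal to $c$. The linear map sending the canonical basis to $(e,c,f)$ then lies in $\operatorname{U}(2,1)$ and sends $c_0=(0,1,0)$ to $c$, hence sends $C_0$ to the complex line polar to $c$. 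This proves that $\operatorname{PU}(2,1)$ acts transitively on complex lines. (This is exactly the single-point instance of the classification in Proposition \ref{reflexistence}, where the conditions collapse to equality of norms.)

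For the stabilizer, I would first argue that stabilizing $C_0$ as a subset of $\mathbb{H}_\CC^2$ is equivalent to preserving the decomposition $\CC^3=W_0\oplus\CC c_0$, where $W_0=\{(z_1,0,z_3)\}=c_0^\perp$. Indeed, the closure of $C_0$ spans the projective line $\mathbb{P}(W_0)$, so $g$ stabilizes $C_0$ if and only if $g(W_0)=W_0$, and since $g$ is unitary this forces $g(\CC c_0)=\CC c_0$ as well. On the invariant subspace $W_0$ the form is $z_1\overline{w_1}-z_3\overline{w_3}$, of signature $(1,1)$, so $g|_{W_0}\in\operatorname{U}(1,1)$; on the invariant line $\CC c_0$ the form is positive definite, so $g$ acts there by an element of $\operatorname{U}(1)$. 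Writing matrices in the reordered basis $(e_1,e_3,e_2)$ shows the stabilizer in $\operatorname{U}(2,1)$ is the block-diagonal group $\operatorname{U}(1,1)\times\operatorname{U}(1)$. Passing to the quotient by the center (the scalar matrices) yields the stabilizer $\mathbb{P}(\operatorname{U}(1,1)\times\operatorname{U}(1))$ in $\operatorname{PU}(2,1)$, and conjugacy of the stabilizer of an arbitrary complex line to this one is immediate from the transitivity just established.

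The signature bookkeeping and the matrix block form are routine; the one step deserving genuine care is the identification of the set-theoretic stabilizer of the complex line $C_0\subset\mathbb{H}_\CC^2$ with the linear stabilizer of the plane $W_0$. I expect this to be the main (if minor) obstacle: one must check that a holomorphic isometry preserving the open disk $C_0$ necessarily preserves the projective line $\mathbb{P}(W_0)$ containing it — which follows because $C_0$ contains three points in general position spanning $\mathbb{P}(W_0)$ — and then invoke unitarity to pass from preserving $W_0$ to preserving its polar line $\CC c_0$.
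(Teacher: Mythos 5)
Your argument is correct and follows the standard route (the paper itself gives no proof, simply citing \cite{Par}): transitivity via extension of a normalized polar vector to an orthogonal basis adapted to the signature, and identification of the stabilizer of $C_0$ with the block-diagonal group preserving the splitting $\CC^3=W_0\oplus\CC c_0$, which projectivizes to $\mathbb{P}(\operatorname{U}(1,1)\times\operatorname{U}(1))$. The only quibble is that $\mathbb{P}(W_0)$ is a projective \emph{line}, so two distinct points of $C_0$ already force $g(W_0)=W_0$; no appeal to ``three points in general position'' is needed.
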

	
	\begin{Rema}
		Any two points in $\mathbb{H}_\CC^2$ are contained in a unique complex line.
	\end{Rema}
	
	A complex line is supported by a projective line in the projective plane. We therefore have three possibilities for the relative position of two distinct complex lines:
	\begin{Defi}
		\begin{itemize}
			\item If they intersect in a point in $\mathbb{H}_\CC^2$, we say that they are \textit{intersecting} ;
			\item If their supporting projective lines intersect in a point outside $\overline{\mathbb{H}_\CC^2}$, and therefore are disjoint in $\overline{\mathbb{H}_\CC^2}$, we call them \textit{ultraparallel} ;
			\item If their closures intersect in a point in $\partial\mathbb{H}_\CC^2$, we call them \textit{asymptotic}.
		\end{itemize}
	\end{Defi}
	
	We describe pairs of complex lines in the following way:
	
	\begin{Defi}\label{angledist}
		Let $C_1,C_2$ be two intersecting complex lines, with normalized polar vectors $c_1,c_2$. The \textit{complex angle} $\phi(C_1,C_2)\in[0,\pi/2]$ between $C_1$ and $C_2$ is defined as
		$$\cos(\phi(C_1,C_2))=|\langle c_1,c_2\rangle|.$$
	\end{Defi}
	
	If $C_1,C_2$ are two ultraparallel complex lines, with normalized polar vectors $c_1,c_2$, then the distance $l$ between $C_1$ and $C_2$ is defined as
	$$\cosh(l/2)=|\langle c_1,c_2\rangle|.$$
	
	\begin{Rema}
		We notice that a regular elliptic element $A$ has three fixed points in $\mathbb{P}(\CC^3)$: one inside $\mathbb{H}_\CC^2$, and two outside. These last two are vectors polar to the two complex lines that are globally preserved by $A$, and on which it acts by rotations (through angles $\alpha_1$ and $\alpha_2$ in Remark \ref{ellip}). These two complex lines intersect in the fixed point that is inside $\mathbb{H}_\CC^2$.
	\end{Rema}
	
	We now divide the special elliptic isometries into two types:
	
	\begin{Defi}
		\begin{itemize}
			\item Let $C$ be a complex line of $\mathbb{H}_\CC^2$. A \textit{complex reflection in the complex line $C$} is an isometry whose fixed point set is exactly $C$. The line $C$ is called the \textit{mirror} of the complex reflection.
			\item Let $c$ be a point in $\mathbb{H}_\CC^2$. A \textit{complex reflection in $c\in\mathbb{H}_\CC^2$} is a special elliptic isometry whose fixed point set is $c$. It also fixes pointwise the projective line polar to $c$, which we sometimes call the mirror as well.
		\end{itemize}
	\end{Defi}

In both cases, the complex reflection has a unique projective line of fixed points, and a unique isolated fixed point polar to this line.
	
	A complex reflection in the complex line $C_0$ defined above is represented by the matrix 
	$$R(\eta)=\begin{pmatrix}
	1 & 0 & 0 \\
	0 & \eta & 0 \\
	0 & 0 & 1 
	\end{pmatrix},$$
	where $\eta=e^{i\theta}\neq1$ is a complex number of modulus one, called the \textit{rotation factor} of the reflection. Any complex reflection in a complex line is conjugate to $R(\eta)$ for some $\eta$. The action of $R(\eta)$ on $\mathbb{H}_\CC^2$ is given by $(z_1,z_2)\mapsto(z_1,\eta z_2)$: it fixes the first coordinate axis and acts by a rotation of angle $\theta$ on the second axis. A representative in $\operatorname{SU}(2,1)$ depends on the choice of a cube root $\eta^{1/3}$ and is given by the matrix:
	$$\tilde{R}(\eta)=\begin{pmatrix}
	\eta^{-1/3} & 0 & 0 \\
	0 & \eta^{2/3} & 0 \\
	0 & 0 & \eta^{-1/3} 
	\end{pmatrix}.$$
	
Similarly, a complex reflection in the origin $(0,0,1)$ is represented by 
$$S(\eta)=\begin{pmatrix}
\eta & 0 & 0 \\
0 & \eta & 0 \\
0 & 0 & 1 
\end{pmatrix}.$$
Its action on $\mathbb{H}_\CC^2$ is given by $(z_1,z_2)\mapsto(\eta z_1,\eta z_2)$: it fixes the origin and acts by a rotation of angle $\theta$ on each complex line through this point.

\begin{Lemm}\label{reflectionformula}
	\emph{\cite{Pra}} A complex reflection $R$ of rotation factor $\eta$ whose isolated fixed point is $c\in\mathbb{P}(\CC^3)$ is the projectivization of the map $\tilde{R}$ defined over $\CC^3$ by:
	\begin{equation*}
	\forall z\in\mathbb{H}_\CC^2, \ \tilde{R}(\tilde{z})=\tilde{z}+(\eta-1)\frac{\langle \tilde{z},\tilde{c}\rangle}{\langle \tilde{c},\tilde{c}\rangle}\tilde{c}.
	\end{equation*}
\end{Lemm}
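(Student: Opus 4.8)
The plan is to regard the right-hand side of the stated formula as a genuine $\CC$-linear endomorphism $\tilde{R}$ of $\CC^3$, since it is linear in $\tilde{z}$; the restriction to $\mathbb{H}_\CC^2$ in the statement is purely cosmetic. Note first that the expression $(\eta-1)\frac{\langle\tilde z,\tilde c\rangle}{\langle\tilde c,\tilde c\rangle}\tilde c$ is unchanged when $\tilde c$ is rescaled, so $\tilde R$ depends only on the projective point $c$ and on $\eta$. After rescaling I may therefore assume $c$ is a normalized polar vector, $\langle c,c\rangle=1$ (possible since $c$ is of positive type), so that $\tilde R(z)=z+(\eta-1)\langle z,c\rangle\,c$. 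The whole argument then runs through the orthogonal splitting $\CC^3=\CC c\oplus c^\perp$, where $c^\perp=W$ is the complex plane supporting the mirror.

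First I would pin down the fixed-point structure. If $z\in c^\perp$ then $\langle z,c\rangle=0$ and $\tilde R(z)=z$, so $\tilde R$ restricts to the identity on $W$; projectively this says $R$ fixes the mirror $C=\mathbb{P}(W)\cap\mathbb{H}_\CC^2$ pointwise. On the complementary line, $\tilde R(c)=c+(\eta-1)\langle c,c\rangle\,c=\eta c$. Hence $\tilde R$ is diagonalizable with eigenvalue $1$ on $W$ (multiplicity two) and eigenvalue $\eta$ on $\CC c$, so $R$ has exactly the isolated fixed point $[c]$ polar to its mirror, precisely as in the definition of a complex reflection.

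Next I would check $\tilde R\in\operatorname{U}(2,1)$ by a direct computation with the sesquilinear form. Expanding $\langle\tilde R u,\tilde R v\rangle$ and using $\langle c,v\rangle=\overline{\langle v,c\rangle}$ and $\langle c,c\rangle=1$, all cross terms collect into $\langle u,c\rangle\,\overline{\langle v,c\rangle}\,\bigl[(\eta-1)+\overline{(\eta-1)}+|\eta-1|^2\bigr]$, and the bracket simplifies to $|\eta|^2-1$. Since $|\eta|=1$ this vanishes, giving $\langle\tilde R u,\tilde R v\rangle=\langle u,v\rangle$ for all $u,v$, so $\tilde R$ preserves the Hermitian form. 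This is the only place where the hypothesis $|\eta|=1$ enters, and it is the computational heart of the proof.

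Finally I would identify $\tilde R$ with the standard model to fix the value of the rotation factor. Using that $\operatorname{PU}(2,1)$ acts transitively on complex lines, I conjugate $c$ to the standard polar vector $c_0=(0,1,0)$; the splitting above is carried to the standard one and $\tilde R$ becomes $\operatorname{diag}(1,\eta,1)=R(\eta)$, the displayed matrix of a complex reflection in $C_0$. Since any complex reflection in a complex line is determined by its mirror together with its rotation factor (all being conjugate to some $R(\eta)$), this identifies $R$ as claimed. The only genuine subtlety is conventional: one must confirm that the eigenvalue along the polar vector $c$ is exactly the quantity called the rotation factor, which is settled by observing that $c_0$ is precisely the $\eta$-eigenvector of $R(\eta)$.
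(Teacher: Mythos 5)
The paper offers no proof of this lemma at all; it is quoted from Pratoussevitch \cite{Pra}. Your argument is a correct and self-contained verification, and its structure is the natural one: read the formula as a $\CC$-linear map, check it is the identity on $c^\perp$ and multiplication by $\eta$ on $\CC c$, verify $\langle \tilde R u,\tilde R v\rangle=\langle u,v\rangle$ via the cancellation $(\eta-1)+\overline{(\eta-1)}+|\eta-1|^2=|\eta|^2-1=0$, and then match the normal form $R(\eta)=\operatorname{diag}(1,\eta,1)$ with $c_0=(0,1,0)$ as the $\eta$-eigenvector to identify the rotation factor. All of these steps are sound.

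One caveat: the statement allows $c$ to be any non-null point of $\mathbb{P}(\CC^3)$, and the paper's definition of complex reflection explicitly includes reflections in a point $c\in\mathbb{H}_\CC^2$, whose isolated fixed point is of \emph{negative} type (this case is genuinely used later, e.g.\ in the discussion of hyperbolic decomposable triples). Your normalization ``$\langle c,c\rangle=1$, possible since $c$ is of positive type'' silently discards that case. The omission is harmless but should be repaired: either do not normalize at all and observe that the bracket in your unitarity computation becomes $\frac{1}{\langle c,c\rangle}\bigl[(\eta-1)+\overline{(\eta-1)}+|\eta-1|^2\bigr]$, which still vanishes for $|\eta|=1$ whatever the sign of $\langle c,c\rangle$, or treat $\langle c,c\rangle=-1$ separately and conjugate to the model $S(\eta)$ fixing the origin instead of to $R(\eta)$. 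With that sentence added, the proof covers the full statement.
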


	\begin{Lemm}\label{fppairs}
		Let $A\in\operatorname{PU}(2,1)$ be a non-trivial isometry fixing two distinct points $P_1,P_2\in\mathbb{P}(\CC^3)\backslash\partial\mathbb{H}^2_\CC$. We assume their lifts to $\CC^3$ to be non orthogonal to each other.
		\begin{itemize}
			\item If the projective line $(P_1P_2)$ intersects $\mathbb{H}^2_\CC$, then $A$ is a complex reflection in $(P_1P_2)$;
			\item If the projective line $(P_1P_2)$ does not intersect $\mathbb{H}^2_\CC$, then $A$ is a complex reflection in the point polar to $(P_1P_2)$.
		\end{itemize}
		 
	\end{Lemm}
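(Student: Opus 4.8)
The plan is to read off the eigenstructure of $A$ from its two fixed points. First I would choose a lift $\tilde A\in\operatorname{U}(2,1)$ and lifts $\tilde{P_1},\tilde{P_2}\in\CC^3$, so that $\tilde A\tilde{P_i}=\lambda_i\tilde{P_i}$ for some $\lambda_i\in\CC^*$. Since $P_1,P_2\notin\partial\mathbb{H}_\CC^2$, the vectors $\tilde{P_i}$ are non-null, and the identity $\langle\tilde A\tilde{P_i},\tilde A\tilde{P_i}\rangle=\langle\tilde{P_i},\tilde{P_i}\rangle$ gives $|\lambda_i|^2\langle\tilde{P_i},\tilde{P_i}\rangle=\langle\tilde{P_i},\tilde{P_i}\rangle$, hence $|\lambda_i|=1$. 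The non-orthogonality hypothesis then does the decisive work: from $\langle\tilde A\tilde{P_1},\tilde A\tilde{P_2}\rangle=\langle\tilde{P_1},\tilde{P_2}\rangle$ and $\langle\tilde{P_1},\tilde{P_2}\rangle\neq0$ one gets $\lambda_1\overline{\lambda_2}=1$, and combined with $|\lambda_2|=1$ this forces $\lambda_1=\lambda_2=:\lambda$. Geometrically this is exactly the step that rules out $A$ being a regular elliptic element with two of its three (mutually orthogonal) fixed points among the $P_i$.

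Next I would promote this to a genuine diagonalisation. Setting $W=\operatorname{span}(\tilde{P_1},\tilde{P_2})$, the map $\tilde A$ acts as $\lambda\,\mathrm{Id}$ on $W$; being unitary, it also preserves $W^\perp$. Assuming the Hermitian form restricts non-degenerately to $W$, one has $\CC^3=W\oplus W^\perp$ with $W^\perp$ a $\tilde A$-invariant complex line carrying some eigenvalue $\mu$, and non-triviality of $A$ forces $\mu\neq\lambda$. Thus $\tilde A$ is diagonalisable with eigenvalues $(\lambda,\lambda,\mu)$; dividing by $\lambda$ we reach the normal form of Remark \ref{ellip} with a repeated eigenvalue, so $A$ is special elliptic, fixing $\mathbb{P}(W)=(P_1P_2)$ pointwise and admitting the single further fixed point $\mathbb{P}(W^\perp)$ polar to it. By the definitions of Section \ref{compref}, $A$ is therefore a complex reflection whose mirror and isolated fixed point are $\mathbb{P}(W)$ and $\mathbb{P}(W^\perp)$ in one order or the other.

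It then remains to decide which locus is the mirror, which is dictated by the signature of the form on $W$. Since $\CC^3$ has signature $(2,1)$ and $W\perp W^\perp$, only two non-degenerate possibilities occur. If $(P_1P_2)=\mathbb{P}(W)$ meets $\mathbb{H}_\CC^2$, then $W$ contains a negative vector and has signature $(1,1)$, so $W^\perp$ is positive-definite; the fixed-point set of $A$ inside $\mathbb{H}_\CC^2$ is then the complex line $\mathbb{P}(W)\cap\mathbb{H}_\CC^2$, and $A$ is a complex reflection in the complex line $(P_1P_2)$. If instead $(P_1P_2)$ misses $\mathbb{H}_\CC^2$, then $W$ is positive-definite of signature $(2,0)$ and $W^\perp$ is negative, so the polar point $\mathbb{P}(W^\perp)$ lies in $\mathbb{H}_\CC^2$ and is the unique fixed point of $A$ there; hence $A$ is a complex reflection in that point. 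These are exactly the two asserted conclusions.

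The step I expect to be the main obstacle is the non-degeneracy of the form on $W$ invoked in the second paragraph, which is what secures the splitting $\CC^3=W\oplus W^\perp$ and the diagonalisability of $\tilde A$. It amounts to excluding the case where $(P_1P_2)$ is tangent to $\partial\mathbb{H}_\CC^2$; there $W^\perp\subset W$ is null, $\tilde A$ may fail to be diagonalisable (indeed it can be parabolic, fixing $(P_1P_2)$ pointwise), and the polar point of the second case would fall on the boundary rather than inside $\mathbb{H}_\CC^2$. The hypotheses that the $P_i$ lie off the boundary and have non-orthogonal lifts drive the eigenvalue bookkeeping above; in each of the two configurations envisaged by the statement the line $(P_1P_2)$ is non-tangent, so the form on $W$ is non-degenerate and the argument closes.
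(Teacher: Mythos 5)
Your argument is essentially the paper's proof written out in full: the paper rules out the loxodromic and parabolic cases by ``a quick inspection'' and the regular elliptic case by the pairwise orthogonality of its three fixed points, then concludes that $A$ is special elliptic and reads off which kind of complex reflection it is from the position of $(P_1P_2)$. Your computation $\lambda_1\overline{\lambda_2}=1$ from $\langle\tilde{P_1},\tilde{P_2}\rangle\neq0$ is exactly the quantitative form of that orthogonality argument, and your signature discussion of $W=\operatorname{span}(\tilde{P_1},\tilde{P_2})$ matches the paper's final case division. So the route is the same, only more explicit.

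The place where you go beyond the paper is also the place where your justification fails. You assert that ``in each of the two configurations envisaged by the statement the line $(P_1P_2)$ is non-tangent''. That is not so: a tangent line does not meet the open ball $\mathbb{H}_\CC^2$, hence falls under the second bullet of the statement, so tangency is not excluded by the hypotheses. Moreover the tangent case is a genuine counterexample to the lemma as literally stated: for the Hermitian form $z_1\overline{w_3}+z_2\overline{w_2}+z_3\overline{w_1}$, the vertical Heisenberg translation $T=I+itE_{1,3}$ is a non-trivial parabolic whose fixed locus in $\mathbb{P}(\CC^3)$ is the projectivization of the degenerate plane $W=\operatorname{span}(e_1,e_2)$; the points $P_1=[e_2]$ and $P_2=[e_1+e_2]$ are distinct, off the boundary and non-orthogonal, yet $T$ is not a complex reflection, and the point polar to $(P_1P_2)$ is the boundary point $[e_1]$. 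The first bullet is safe (a plane containing a negative vector is automatically non-degenerate, since the orthogonal complement of a negative vector is positive definite), but the second bullet needs the extra hypothesis that $W$ is non-degenerate, equivalently that $(P_1P_2)$ misses $\overline{\mathbb{H}_\CC^2}$ and not merely $\mathbb{H}_\CC^2$. To be fair, the paper's own proof has exactly the same hole --- its ``quick inspection'' that $A$ cannot be parabolic is false for this configuration --- but the correct repair is to add the non-degeneracy hypothesis (or verify it wherever the lemma is applied), not to claim that it already follows from the statement.
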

	
	\begin{proof}
		A quick inspection of the situation shows that $A$ cannot be loxodromic or parabolic. If $A$ is regular elliptic, then its three fixed points are pairwise orthogonal, which is excluded by hypothesis. Therefore $A$ must be special elliptic. This concludes the proof, using the definitions above.
	\end{proof}

\subsection{A normalization for a triple of complex reflections}\label{reflnorm}

We introduce here a convenient way of writing irreducible triples of complex reflections (i.e.\ their eigenspaces are pairwise disjoint). It will be useful later on for parameterizations.

Let $R_1,R_2,R_3$ be three complex reflections in the pairwise distinct complex lines $C_1,C_2,C_3$, with normalized polar vectors $c_1,c_2,c_3$ that form a base of $\CC^3$, and rotation factors $\eta_1,\eta_2,\eta_3$. For $k\in\{1,2,3\}$, denote $r_k=|\langle c_{k-1},c_{k+1}\rangle|$. We recover Definition \ref{angledist}:
\begin{itemize}
	\item if $r_k<1$, $C_{k-1}$ and $C_{k+1}$ are intersecting, and $r_k$ is equal to the cosine of the angle between them;
	\item if $r_k>1$, $C_{k-1}$ and $C_{k+1}$ are ultraparallel and $r_k$ is equal to the hyperbolic cosine of the half-distance between them.
\end{itemize}

Define
$$\alpha=\arg\left(\prod_{k=1}^{3}\langle c_{k-1},c_{k+1}\rangle\right).$$
Up to multiplying each $c_k$ by a complex number of modulus $1$, we may assume that for each $k$, $\langle c_{k-1},c_{k+1}\rangle=r_ke^{i\alpha/3}$. Denote $u=e^{i\alpha/3}$.

The matrix of the Hermitian form in this basis is
$$H=\begin{pmatrix}
1 & r_3u & r_2u^{-1} \\
r_3u^{-1} & 1 & r_1u \\
r_2u & r_1u^{-1} & 1 
\end{pmatrix}.$$
		
The data $(r_1,r_2,r_3,u)$ classifies triples of complex lines up to the action of $\operatorname{PU}(2,1)$. See for instance \cite{Pra} where another normalization is used, and Section \ref{reflgrp}.

\begin{Lemm}\label{reflnormlemma}
	The lifts of $R_1,R_2,R_3$ in the basis $(c_1,c_2,c_3)$ are
	$$\tilde{R_1}=\begin{pmatrix}
	\eta_1 & r_3(\eta_1-1)u^{-1} & r_2(\eta_1-1)u \\
	0 & 1 & 0 \\
	0 & 0 & 1
	\end{pmatrix},$$
	$$\tilde{R_2}=\begin{pmatrix}
	1 & 0 & 0 \\
	r_3(\eta_2-1)u & \eta_2 & r_1(\eta_2-1)u^{-1} \\
	0 & 0 & 1
	\end{pmatrix},$$
	$$\tilde{R_3}=\begin{pmatrix}
	1 & 0 & 0 \\
	0 & 1 & 0 \\
	r_2(\eta_3-1)u^{-1} & r_1(\eta_3-1)u & \eta_3
	\end{pmatrix}.$$
\end{Lemm}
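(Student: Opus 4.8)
The plan is to obtain all three matrices by a single direct application of Lemma \ref{reflectionformula}, exploiting the normalization $\langle c_k,c_k\rangle=1$. Since $R_k$ is the complex reflection with isolated fixed point $c_k$ and rotation factor $\eta_k$, and its polar vector is normalized, the denominator in Lemma \ref{reflectionformula} is $1$ and $R_k$ is the projectivization of the map
$$\tilde{R_k}(z)=z+(\eta_k-1)\langle z,c_k\rangle\,c_k.$$
To read off the matrix in the basis $(c_1,c_2,c_3)$, I would evaluate $\tilde{R_k}$ on each basis vector $c_j$: the coordinates of $\tilde{R_k}(c_j)$ in the basis form the $j$-th column. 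The key structural observation is that $\tilde{R_k}(c_j)=c_j+(\eta_k-1)\langle c_j,c_k\rangle\,c_k$, so $\tilde{R_k}$ modifies \emph{only} the $c_k$-coordinate; hence the matrix of $\tilde{R_k}$ is the identity except on its $k$-th row, which is exactly the shape displayed in the statement.

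I would carry this out explicitly for $R_1$ and let symmetry handle the rest. On the fixed vector one gets $\tilde{R_1}(c_1)=c_1+(\eta_1-1)\langle c_1,c_1\rangle c_1=\eta_1 c_1$, producing the first column $(\eta_1,0,0)$. For the other two, $\tilde{R_1}(c_2)=c_2+(\eta_1-1)\langle c_2,c_1\rangle c_1$ and $\tilde{R_1}(c_3)=c_3+(\eta_1-1)\langle c_3,c_1\rangle c_1$, so only the entries in the first row (columns $2$ and $3$) are nonzero off the diagonal. It then remains to substitute the off-diagonal Hermitian products recorded in the Gram matrix $H$ of the normalized basis, namely $\langle c_2,c_1\rangle=r_3u^{-1}$ and $\langle c_3,c_1\rangle=r_2u$, which reproduces exactly the claimed $\tilde{R_1}$.

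The matrices for $R_2$ and $R_3$ follow verbatim under the cyclic shift $1\to2\to3\to1$: $\tilde{R_2}$ alters only the second row, with off-diagonal entries controlled by $\langle c_1,c_2\rangle=r_3u$ and $\langle c_3,c_2\rangle=r_1u^{-1}$, and $\tilde{R_3}$ alters only the third row, governed by $\langle c_1,c_3\rangle=r_2u^{-1}$ and $\langle c_2,c_3\rangle=r_1u$; these are again just the corresponding entries of $H$. As a consistency check one notes that each $\tilde{R_k}$ has determinant $\eta_k$ and, since $2\operatorname{Re}(\eta_k-1)+|\eta_k-1|^2=0$ for $|\eta_k|=1$, preserves $\langle\cdot,\cdot\rangle$, so it is indeed a lift in $\operatorname{U}(2,1)$. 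There is no conceptual difficulty here; the only delicate point is bookkeeping, namely keeping track of which of $r_1,r_2,r_3$ pairs with a given index and, above all, whether each Hermitian product carries $u$ or $u^{-1}$. This is pinned down entirely by the relation $\langle c_j,c_i\rangle=\overline{\langle c_i,c_j\rangle}$ together with $\overline{u}=u^{-1}$, and getting these conjugations consistent with $H$ is the whole of the work.
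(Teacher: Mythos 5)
Your proof is correct and is exactly the paper's argument: the paper's proof consists of applying the formula of Lemma \ref{reflectionformula} to the basis vectors $c_1,c_2,c_3$, which is precisely what you do. Your reading of the Gram matrix ($\langle c_1,c_2\rangle=r_3u$, $\langle c_1,c_3\rangle=r_2u^{-1}$, $\langle c_2,c_3\rangle=r_1u$) is the one consistent with the $u$ versus $u^{-1}$ placement in the stated matrices, and your extra checks (determinant $\eta_k$, preservation of the form) are sound.
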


\begin{proof}
	The proof follows directly from applying the formula of Lemma \ref{reflectionformula} to $c_1,c_2,c_3$.
\end{proof}

\section{Decomposing regular elliptics}\label{decell}

We now describe the notion of decomposability in products of complex reflections.

\subsection{Definition and characterization}

\begin{Defi}\label{decomp}
	\begin{itemize}
		\item Let $A,B,C\in\operatorname{PU}(2,1)$ be regular elliptic elements with $ABC=1$. We say that $(A,B,C)$ is a \textit{decomposable triple} if there exist three complex reflections $R_1,R_2,R_3$ such that $A=R_1R_2^{-1},B=R_2R_3^{-1},C=R_3R_1^{-1}$.
		\item If $\rho$ is a representation of $\Gamma$ in $\operatorname{PU}(2,1)$, and $A=\rho(a),B=\rho(b),C=\rho(c)$ are regular elliptic, we say that $\rho$ is \textit{decomposable} if $(A,B,C)$ is a decomposable triple.
	\end{itemize}
\end{Defi}

\begin{Rema}
	The decomposability property is invariant up to global conjugation of the triple.
\end{Rema}

\begin{Defi}\label{irred}
	Let $A,B,C\in\operatorname{PU}(2,1)$ be regular elliptic elements with $ABC=1$. We say that $(A,B,C)$ is \textit{irreducible} when the eigenspaces of $A,B,C$ are pairwise distinct; otherwise, $(A,B,C)$ is \textit{reducible}.
\end{Defi}

We will describe a necessary and sufficient condition for an irreducible representation to be decomposable. This condition will be expressed with a choice of preferred eigenvectors of the three regular elliptic elements. We describe this choice in the following definition.

\begin{Defi}
	Let $\rho$ be a representation of $\Gamma$ in $\operatorname{PU}(2,1)$, with $A=\rho(a),B=\rho(b),C=\rho(c)$ regular elliptic elements.
	\begin{enumerate}[i)]
		\item Let $P_A,P_B,P_C\in\mathbb{P}(\CC^3)\backslash\partial\mathbb{H}_\CC^2$  be three fixed points. We say that they are in \textit{irreducible configuration} when they are pairwise distinct and non orthogonal.
		\item Suppose we choose fixed points in irreducible configuration. Let $\tilde{A},\tilde{B},\tilde{C}$ be representatives of $A,B,C$ in $\operatorname{SU}(2,1)$ (this means that $\tilde{A}\tilde{B}\tilde{C}=\omega I$, where $\omega$ is a cube root of unity) and $\tilde{P_A},\tilde{P_B},\tilde{P_C}$ be lifts of $P_A,P_B,P_C$ in $\CC^3$. Consequently, $\tilde{A}\tilde{P_A}=\lambda_A\tilde{P_A},\tilde{B}\tilde{P_B}=\lambda_B\tilde{P_B},\tilde{AB}\tilde{P_C}=\lambda_{AB}\tilde{P_C}=\omega\overline{\lambda_C}\tilde{P_C}$, where $\lambda_A,\lambda_B,\lambda_{AB}$ are unit modulus eigenvalues. We call them \textit{preferred eigenvalues}.
	\end{enumerate}
\end{Defi}

\begin{Rema}
	\begin{itemize}
		\item Each fixed point of $A,B,C$ has negative or positive type. So an irreducible configuration $(P_A,P_B,P_C)$ has a type $(\epsilon_A,\epsilon_B,\epsilon_C)\in\{+,-\}^3$. See Section \ref{sphhyp} for a geometric interpretation.
		\item If the triple $(A,B,C)$ is irreducible (in the sense of Definition \ref{irred}), then for any triple $(\epsilon_A,\epsilon_B,\epsilon_C)\in\{+,-\}^3$, there exists a triple $(P_A,P_B,P_C)$ of type $(\epsilon_A,\epsilon_B,\epsilon_C)$ which is in irreducible configuration.
	\end{itemize}
\end{Rema}

We now state our first result.

\begin{Theo}\label{caractdecomp}
	Let $\rho$ be a representation of $\Gamma$ in $\operatorname{PU}(2,1)$, and let $P_A,P_B,P_C$ be fixed points of $(A,B,C)$ in irreducible configuration. Let $\lambda_A,\lambda_B,\lambda_C$ be preferred eigenvalues. The following are equivalent:
	\begin{enumerate}[i)]
		\item The triple $(A,B,C)$ is decomposable, and the corresponding reflections $R_1,R_2,R_3$ have respective mirrors $(P_AP_C),(P_AP_B),(P_BP_C)$;
		\item Let $P_{BA}:=A^{-1}P_C$. The projective lines $(P_AP_B)$ and $(P_CP_{BA})$ are orthogonal;
		\item $(\lambda_A\lambda_B\lambda_C)^3=1$.
	\end{enumerate}
\end{Theo}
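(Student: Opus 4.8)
The plan is to prove the equivalences as a cycle, with the substantive content split between a geometric reconstruction giving (i) $\Leftrightarrow$ (ii) and a single cross-ratio computation giving (ii) $\Leftrightarrow$ (iii). The first observation to record is that the mirrors in (i) are forced: if $A=R_1R_2^{-1}$ with $R_1,R_2$ reflections whose mirrors both pass through $P_A$, then $R_1R_2^{-1}$ fixes the intersection of those mirrors, so that intersection must be $P_A$; running this for all three products shows the mirrors can only be $(P_AP_C),(P_AP_B),(P_BP_C)$, each $R_k$ fixing the two vertices lying on it. This pins down exactly what must be constructed.

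For (i) $\Rightarrow$ (ii) I would use the elementary fact, read off from Lemma \ref{reflectionformula}, that a complex reflection displaces every point along its polar direction: if $R_2$ has mirror $(P_AP_B)$ with polar vector $c_2$, then $\tilde{R_2}\tilde{P_C}-\tilde{P_C}$ is proportional to $c_2$, so the line $(P_C\,R_2P_C)$ is orthogonal to $(P_AP_B)$. Since $P_C$ lies on the mirror $(P_AP_C)$ of $R_1$, one gets $A^{-1}P_C=R_2R_1^{-1}P_C=R_2P_C$, i.e. $P_{BA}=R_2P_C$, and (ii) follows. The converse (ii) $\Rightarrow$ (i) is where the orthogonality earns its keep: assuming (ii), the points $P_C$ and $P_{BA}=A^{-1}P_C$ lie on a common complex line orthogonal to $(P_AP_B)$, so there is a complex reflection $R_2$ with mirror $(P_AP_B)$ and $R_2P_C=A^{-1}P_C$. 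Setting $R_1:=AR_2$ and $R_3:=B^{-1}R_2$, a direct check using $R_2P_A=P_A$, $R_2P_B=P_B$, $R_2P_C=A^{-1}P_C$ and $C=(AB)^{-1}$ shows $R_1$ fixes $P_A,P_C$ and $R_3$ fixes $P_B,P_C$; Lemma \ref{fppairs} (whose hypotheses hold because the configuration is irreducible, so the lifts are non-orthogonal, and because $A,B,C$ regular elliptic forces the $R_k$ non-trivial) then makes $R_1,R_3$ genuine complex reflections in the prescribed mirrors, while $A=R_1R_2^{-1}$, $B=R_2R_3^{-1}$, $C=R_3R_1^{-1}$ hold by construction.

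For (ii) $\Leftrightarrow$ (iii) I would compute the cross-ratio $X(P_A,P_B,P_C,P_{BA})$ of Definition \ref{crossratio} and invoke Proposition \ref{crossratio=1}. Writing $\tilde{P_{BA}}=\tilde{A}^{-1}\tilde{P_C}$ and using the adjunction $\langle\tilde{A}^{-1}x,y\rangle=\langle x,\tilde{A}y\rangle$, one finds $\langle\tilde{P_{BA}},\tilde{P_A}\rangle=\overline{\lambda_A}\langle\tilde{P_C},\tilde{P_A}\rangle$; and from $\tilde{A}\tilde{B}\tilde{C}=\omega I$, hence $\tilde{A}^{-1}=\omega^{-1}\tilde{B}\tilde{C}$ and $\tilde{P_{BA}}=\omega^{-1}\lambda_C\tilde{B}\tilde{P_C}$, one gets $\langle\tilde{P_{BA}},\tilde{P_B}\rangle=\omega^{-1}\lambda_B\lambda_C\langle\tilde{P_C},\tilde{P_B}\rangle$. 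Substituting yields the clean identity $X(P_A,P_B,P_C,P_{BA})=\omega^{-1}\lambda_A\lambda_B\lambda_C$, so that (ii), i.e. $X=1$, reads $\lambda_A\lambda_B\lambda_C=\omega$; cubing and using $\omega^3=1$ produces $(\lambda_A\lambda_B\lambda_C)^3=1$, which is (iii).

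The step I expect to be the main obstacle is exactly this final cube-root and lift bookkeeping. The cross-ratio produces the lift-independent invariant $\kappa:=\omega^{-1}\lambda_A\lambda_B\lambda_C$, and orthogonality is the sharp condition $\kappa=1$, whereas (iii) is the weaker-looking $\kappa^3=1$; reconciling the two is the delicate point. On the reconstruction side this shows up as having to solve for the rotation factors through $\eta_2/\eta_1=\lambda_A^3$, $\eta_3/\eta_2=\lambda_B^3$, $\eta_1/\eta_3=\lambda_C^3$, whose compatibility is precisely $(\lambda_A\lambda_B\lambda_C)^3=1$, and one must then argue that the permissible choices of $\operatorname{SU}(2,1)$ lifts (together with the fixed configuration $P_A,P_B,P_C$) let one pass coherently between $\kappa^3=1$ and the geometric $\kappa=1$. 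Keeping the three cube roots $\eta_k^{1/3}$ consistent across the three vertices, and confirming that the reconstructed product $R_1R_2^{-1}$ agrees with $A$ on all of $\CC^3$ and not merely at $P_A$, is where the bulk of the careful verification will lie.
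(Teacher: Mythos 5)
Your proposal follows the paper's proof essentially step for step: (i)$\Rightarrow$(ii) via the fact that a complex reflection displaces every point along its polar direction; (ii)$\Rightarrow$(i) by producing the reflection $R_2$ in $(P_AP_B)$ carrying $P_C$ to $P_{BA}$ and setting $R_1=AR_2$, $R_3=B^{-1}R_2$ (the paper obtains $R_2$ by applying Proposition \ref{reflexistence} to the triples $(P_A,P_B,P_C)$ and $(P_A,P_B,P_{BA})$ and then Lemma \ref{fppairs} -- this is where the modulus conditions your direct construction also needs are checked); and the identity $X(P_A,P_B,P_C,P_{BA})=\omega^{-1}\lambda_A\lambda_B\lambda_C$, which is exactly the paper's computation. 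Your cube-root worry on the reconstruction side is moot: everything there happens in $\operatorname{PU}(2,1)$ and no cube roots of the rotation factors are ever chosen.

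The obstacle you flag at the end, however, is genuine, and you should know that the paper does not resolve it either: after computing $X=\lambda_A\lambda_B\lambda_C/\omega$, the paper simply declares that $X=1$ ``if and only if $\lambda_A\lambda_B\lambda_C=\omega$, which concludes the proof'', silently identifying the condition $\kappa:=\omega^{-1}\lambda_A\lambda_B\lambda_C=1$ with the stated condition $\kappa^3=(\lambda_A\lambda_B\lambda_C)^3=1$. These are not equivalent. Changing the $\operatorname{SU}(2,1)$ lifts multiplies $\lambda_A\lambda_B\lambda_C$ and $\omega$ by the \emph{same} cube root of unity, so $\kappa$ is an honest invariant of the configuration $(A,B,C,P_A,P_B,P_C)$, and nothing forces $\kappa^3=1$ to imply $\kappa=1$. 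Concretely: start from a decomposable configuration with $\kappa=1$ in which $A$ has eigenvalues $1,e^{2i\pi/3},e^{-2i\pi/3}$ (the order-$3$ classes used later in the paper), and replace $P_A$ by another fixed point of $A$; the new configuration is still irreducible for generic data, its invariant is $\kappa'=e^{\pm2i\pi/3}$, so (iii) holds while (ii) fails, and (i) fails as well since (i)$\Rightarrow$(ii) is valid. So the ``reconciliation'' you hope for between $\kappa^3=1$ and $\kappa=1$ cannot be carried out; the statement that is actually proved (by you and by the paper) is the equivalence of (i), (ii) and the sharper condition $\lambda_A\lambda_B\lambda_C=\omega$, i.e.\ $\lambda_A\lambda_B\lambda_C=1$ for lifts normalized so that $\tilde{A}\tilde{B}\tilde{C}=I$. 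With (iii) read in that form, your argument is complete and coincides with the paper's.
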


\begin{Rema}
	\begin{itemize}
		\item The fact that $P_{BA}:=A^{-1}P_C$ indeed defines a fixed point of $BA$ follows from $BAP_{BA}=A^{-1}(AB)AP_{BA}=A^{-1}(AB)AA^{-1}P_C=A^{-1}P_C=P_{BA}.$
		\item The product $(\lambda_A\lambda_B\lambda_C)^3$ is independent of the choice of lifts to $\operatorname{SU}(2,1)$.
	\end{itemize}
\end{Rema}

\begin{center}
	\begin{tikzpicture}
	\draw (0,2) node[above]{$P_A$} ;
	\draw (0,-2) node[below]{$P_B$} ;
	\draw (2,0) node[right]{$P_C$} ;
	\draw (-2,0) node[left]{$P_{BA}$} ;
	\draw (1/4,13/8) node[below]{$A$} ;
	\draw (-1/4,-13/8) node[above]{$B$} ;
	\draw (2,0) -- (0,2) -- (-2,0) -- (0,-2) -- (2,0);
	\draw [dashed] (-2,0) -- (2,0);
	\draw (-1/4,3/4+1) arc (225:315:1/3) ;
	\draw (1/4,-3/4-1) arc (45:135:1/3) ;
	\draw [dashed] (0,2) -- (0,-2);
	\draw (1/4,0) -- (1/4,1/4) -- (0,1/4);
	\draw (1/4,3/4+1) -- (1/4-0.15,3/4+1-0.15);
	\draw (1/4-0.18,3/4+1+0.025) -- (1/4,3/4+1);
	\draw (-1/4,-3/4-1) -- (-1/4+0.15,-3/4-1+0.15);
	\draw (-1/4+0.18,-3/4-1-0.025) -- (-1/4,-3/4-1);
	\end{tikzpicture}
\end{center}

\begin{proof}
	$i)\Rightarrow ii)$: Since $AP_{BA}=P_C$ and $A=R_1R_2^{-1}$, we get $R_2^{-1}P_{BA}=R_1^{-1}P_C=P_C$. Consequently, $R_2$ is a complex reflection that fixes the complex line $(P_AP_B)$ pointwise, and globally preserves the line $(P_CP_{BA})$: this implies that they are orthogonal to each other.
	
	$ii)\Leftrightarrow iii)$: We compute $X(P_A,P_B,P_C,P_{BA})$ (see Definition \ref{crossratio}). Since $AP_{BA}=P_C$ and $BP_C=P_{BA}$, there exist complex numbers $\mu,\nu$ such that $\tilde{A}\tilde{P_{BA}}=\mu\tilde{P_C}$ and $\tilde{B}\tilde{P_C}=\nu\tilde{P_{BA}}$. But $\tilde{AB}\tilde{P_C}=\omega\overline{\lambda_C}\tilde{P_C}$. So $\omega\overline{\lambda_C}=\mu\nu$. We have
	$$\langle\tilde{P_C},\tilde{P_B}\rangle=\langle\tilde{B}\tilde{P_C},\tilde{B}\tilde{P_B}\rangle=\nu\overline{\lambda_{B}}\langle\tilde{P_{BA}},\tilde{P_B}\rangle.$$
	Similarly, $\langle\tilde{P_{BA}},\tilde{P_A}\rangle=\mu\overline{\lambda_{A}}\langle\tilde{P_C},\tilde{P_A}\rangle$.
	We now compute the Hermitian cross-ratio:
	$$X(P_A,P_B,P_C,P_{BA})=\frac{\langle\tilde{P_C},\tilde{P_A}\rangle\langle\tilde{P_{BA}},\tilde{P_B}\rangle}{\nu\overline{\lambda_{B}}\langle\tilde{P_{BA}},\tilde{P_B}\rangle\mu\overline{\lambda_{A}}\langle\tilde{P_C},\tilde{P_A}\rangle}=\frac{\lambda_{A}\lambda_{B}}{\omega\overline{\lambda_C}}.$$
	This number is equal to $1$ if and only if $\lambda_A\lambda_B\lambda_C=\omega$, which concludes the proof using Proposition \ref{crossratio=1}.
	
	$ii)\Rightarrow i)$: The computations of the preceeding step show that 
	$$1=X(P_A,P_B,P_C,P_{BA})=\frac{\langle\tilde{P_A},\tilde{P_B}\rangle\langle\tilde{P_{B}},\tilde{P_C}\rangle\langle\tilde{P_C},\tilde{P_A}\rangle}{\langle\tilde{P_A},\tilde{P_B}\rangle\langle\tilde{P_{B}},\tilde{P_{BA}}\rangle\langle\tilde{P_{BA}},\tilde{P_A}\rangle}\cdot\frac{|\langle\tilde{P_{B}},\tilde{P_{BA}}\rangle|^2}{|\langle\tilde{P_{B}},\tilde{P_C}\rangle|^2}$$ 
	By Proposition \ref{reflexistence}, there exists a unique isometry sending $(P_A,P_B,P_C)$ to $(P_A,P_B,P_{BA})$. In particular, it fixes $P_A$ and $P_B$. Using Lemma \ref{fppairs}, we obtain that this isometry is a complex reflection $R_2$ in $(P_AP_B)$ such that $R_2(P_C)=P_{BA}$. In particular, $AR_2$ fixes $P_A$. Then, since $AP_{BA}=P_C$, $AR_2$ fixes $P_C$. Therefore according to Lemma \ref{fppairs}, $AR_2$ is a complex reflection in $(P_AP_C)$. We call it $R_1$. Since $AP_{BA}=P_C$, we have $BP_C=P_{BA}$, so $R_2^{-1}B$ fixes $P_B$ and $P_C$. By the same argument as before, it is a complex reflection in $(P_BP_C)$, which we denote by $R_3^{-1}$. By construction we have $A=R_1R_2^{-1}$ and $B=R_2R_3^{-1}$. Finally, $C=(AB)^{-1}=(R_1R_2^{-1}R_2R_3^{-1})^{-1}=R_3R_1^{-1}$, which concludes the proof of this implication.

\end{proof}

\subsection{Spherical and hyperbolic decomposable triples}\label{sphhyp}

Let us now describe geometrically what happens according to the positions of $P_A,P_B,P_C$.

\begin{enumerate}
	\item If $P_A,P_B,P_C$ are all inside $\mathbb{H}_\CC^2$ (i.e.\ the triple has type $(-,-,-)$), then we obtain a set of complex reflections whose mirrors intersect pairwise. We specify this situation by saying that $(A,B,C)$ is a \textit{spherical decomposable triple}.
	\item If $P_A,P_B,P_C$ are all outside $\mathbb{H}_\CC^2$ (i.e.\ the triple has type $(+,+,+)$), then:
	\begin{itemize}
		\item if all the mirrors intersect $\mathbb{H}_\CC^2$, we obtain a set of complex reflections whose mirrors are pairwise ultraparallel;
		\item if some mirrors do not intersect $\mathbb{H}_\CC^2$, we obtain a decomposition that includes complex reflections in points.
	\end{itemize}
	We specify this situation by saying that $(A,B,C)$ is a \textit{hyperbolic decomposable triple}.
	\item If one point among $P_A,P_B,P_C$ is inside $\mathbb{H}_\CC^2$ and two points are outside (i.e.\ the triple has type $(-,+,+)$ up to permutation), we obtain a set of reflections with two mirrors intersecting, and the third either ultraparallel to the other two or entirely outside $\mathbb{H}_\CC^2$.
	\item If one point among $P_A,P_B,P_C$ is outside $\mathbb{H}_\CC^2$ and two points are inside (i.e.\ the triple has type $(+,-,-)$ up to permutation), we obtain a set of complex reflections in lines with one mirror intersecting the two others, these last two being ultraparallel.
\end{enumerate}

The cases 3 and 4 are called \textit{mixed decomposable}, and we regroup cases 1 and 2 under the name of \textit{pure decomposable}.

We will focus on decompositions in products of complex reflections whose mirrors intersect pairwise, or are pairwise disjoint, i.e.\ pure decomposable triples.

\subsection{Traces and decompositions}\label{tracesanddecomp}

To construct explicit decompositions, we consider traces of representatives in $\operatorname{SU}(2,1)$. Similar computations appear in \cite{Fra} for other purposes.

\begin{Defi}
	\cite{Gol} We call \textit{deltoid} the subset of $\CC$ defined by: $$\Delta=\{2e^{i\theta}+e^{-2i\theta},\theta\in[-\pi,\pi]\}.$$
	We say that a point $z\in\CC$ is inside $\Delta$ if it is in the bounded component of $\CC\backslash\Delta$. See Figure \ref{deltoid}.
\end{Defi}

\begin{figure}
	\centering
	\scalebox{0.3}{\includegraphics{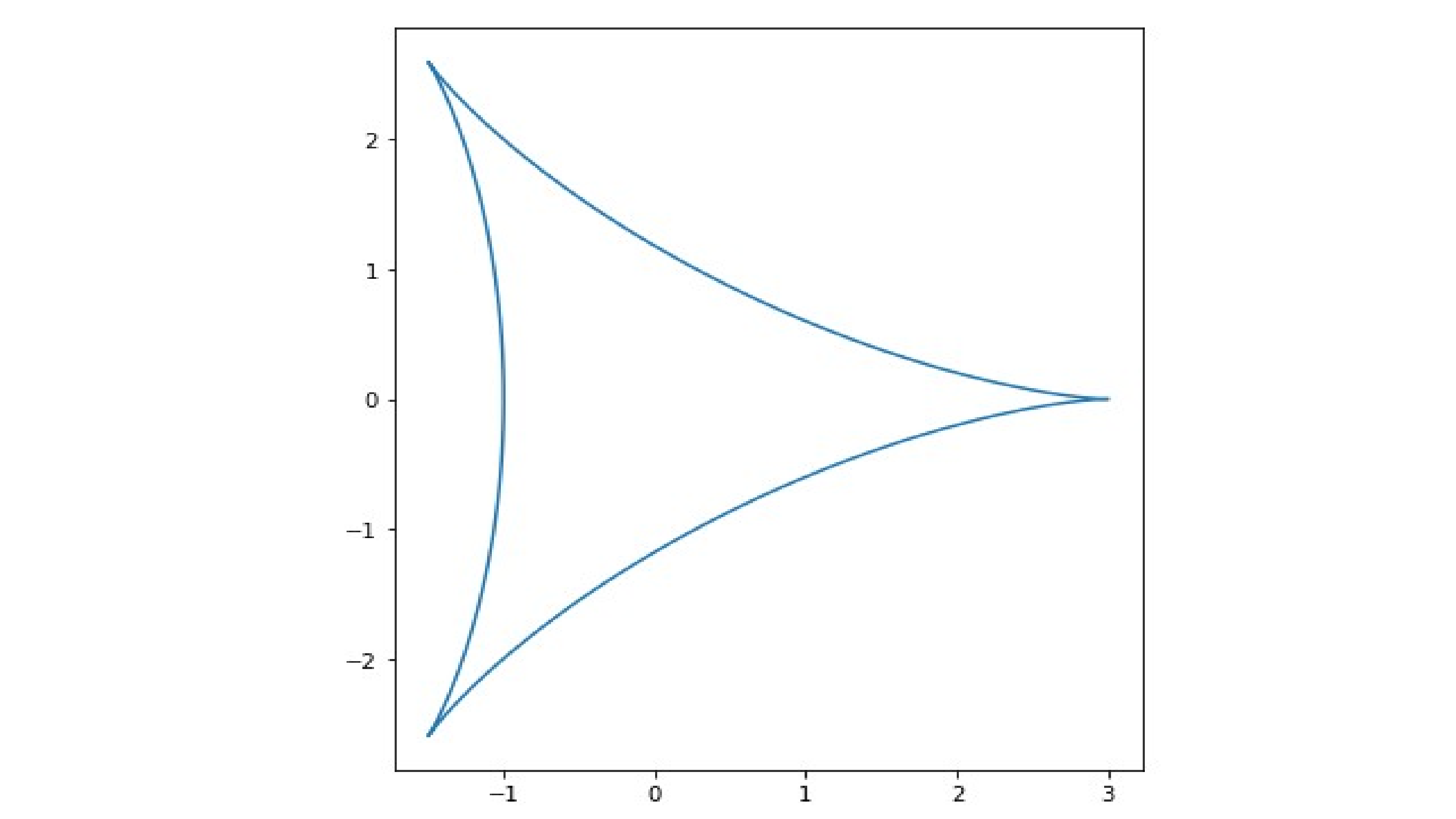}}
	\caption{The deltoid $\Delta$. \label{deltoid}}
\end{figure}

The following result is part of Theorem 6.2.4 in \cite{Gol}.

\begin{Prop}
	An element of $\operatorname{SU}(2,1)$ is regular elliptic if and only if its trace is inside $\Delta$, whereas the trace of a special elliptic element is on $\Delta$.
\end{Prop}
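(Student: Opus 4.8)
The plan is to reduce the statement to the eigenvalues and to recognise the quantity cutting out the deltoid as a discriminant. Every elliptic element is diagonalisable (Remark \ref{ellip}), so an elliptic $A\in\operatorname{SU}(2,1)$ has three unit-modulus eigenvalues $u_1,u_2,u_3$, and $\det A=1$ gives $u_1u_2u_3=1$; hence $\tau:=\operatorname{tr}(A)=u_1+u_2+u_3$. The special elliptic case is then immediate by a direct substitution: two eigenvalues coincide, say $u_1=u_2=e^{i\theta}$, the determinant forces $u_3=e^{-2i\theta}$, and so $\tau=2e^{i\theta}+e^{-2i\theta}$ is by definition a point of $\Delta$.

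For the regular case I would pass through the characteristic polynomial. Since $A^{-1}=JA^\ast J$ one has $\operatorname{tr}(A^{-1})=\overline{\tau}$ by cyclicity of the trace, so the elementary symmetric functions of the eigenvalues are $e_1=\tau,\ e_2=\operatorname{tr}(A^{-1})=\overline{\tau},\ e_3=\det A=1$, and the characteristic polynomial is $\chi(x)=x^3-\tau x^2+\overline{\tau}x-1$. Its discriminant, expressed through $e_1,e_2,e_3$, equals exactly $f(\tau):=|\tau|^4-8\operatorname{Re}(\tau^3)+18|\tau|^2-27$. Substituting $\tau=2e^{i\theta}+e^{-2i\theta}$ shows $f$ vanishes on the parametrisation of $\Delta$; since $f$ is a real quartic in $(\operatorname{Re}\tau,\operatorname{Im}\tau)$ and $\Delta$ is a quartic curve, one checks $\{f=0\}=\Delta$, and evaluating $f(0)=-27<0$ identifies the bounded component of $\CC\setminus\Delta$ with $\{f<0\}$ and the unbounded one with $\{f>0\}$.

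It then remains to read off the sign of the discriminant $f(\tau)=\prod_{i<j}(u_i-u_j)^2$ on each isometry class, and here the key computation is a clean factorisation. For eigenvalues on the unit circle with $u_1u_2u_3=1$, writing $u_i-u_j=2i\sin\!\big(\tfrac{\phi_i-\phi_j}{2}\big)e^{i(\phi_i+\phi_j)/2}$ and using $\phi_1+\phi_2+\phi_3\equiv0$ makes the phases cancel, so $\prod_{i<j}(u_i-u_j)$ is purely imaginary and $f(\tau)=-64\big[\sin\tfrac{\phi_1-\phi_2}{2}\sin\tfrac{\phi_1-\phi_3}{2}\sin\tfrac{\phi_2-\phi_3}{2}\big]^2\le0$, with equality exactly when two eigenvalues coincide. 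Thus a regular elliptic element has $f(\tau)<0$, i.e.\ $\tau$ lies strictly inside $\Delta$. For the converse I would rule out loxodromics: such an element has eigenvalues $re^{i\psi},r^{-1}e^{i\psi},e^{-2i\psi}$ with $r\neq1$ (by the reciprocal symmetry $\chi(x)=-x^3\overline{\chi(1/\overline{x})}$), and the same product now factors as the real number $(r-r^{-1})\big(2\cos3\psi-(r+r^{-1})\big)$, so $f(\tau)=(r-r^{-1})^2\big(2\cos3\psi-(r+r^{-1})\big)^2>0$ because $r+r^{-1}>2\ge2\cos3\psi$. Since parabolics have a repeated eigenvalue and hence $f(\tau)=0$, an element with $\tau$ inside $\Delta$ can only be regular elliptic, which closes both implications.

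The step I expect to be the main obstacle is not the algebra of the factorisations, which is routine once set up, but the planar identification $\{f=0\}=\Delta$ together with the correct placement of the two sign regions: this is where one must argue that the real zero locus of the quartic $f$ is precisely the tricuspid curve (with no spurious components or isolated points) and that $\{f<0\}$ is exactly its bounded complementary region, matching the paper's topological definition of ``inside $\Delta$.'' Everything else follows mechanically from the discriminant identity and the two sign computations above.
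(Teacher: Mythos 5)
The paper does not actually prove this proposition: it is quoted verbatim from Theorem 6.2.4 of \cite{Gol}, so there is no internal proof to compare against. Your argument is correct, and it is in substance Goldman's own: the quantity $f(\tau)=|\tau|^4-8\operatorname{Re}(\tau^3)+18|\tau|^2-27$ is precisely the discriminant of $x^3-\tau x^2+\overline{\tau}x-1$ that he uses, and your two factorisations of $\prod_{i<j}(u_i-u_j)$ (purely imaginary when all eigenvalues are unit modulus with product $1$, real in the loxodromic case) yield exactly the sign dichotomy that drives the proof. The only place where you are lighter than you should be is the step you yourself single out, namely $\{f=0\}=\Delta$: the degree count ``a real quartic vanishing on a quartic curve'' does not by itself rule out isolated real points or extra components of the zero locus. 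But you already hold the tool that closes this: the reciprocal symmetry $\chi(x)=-x^{3}\overline{\chi(1/\overline{x})}$ forces the roots of $x^3-\tau x^2+\overline{\tau}x-1$, for \emph{any} $\tau\in\CC$, to be either all on the unit circle or of the form $e^{i\phi},\,re^{i\psi},\,r^{-1}e^{i\psi}$ with $r\neq1$; in the latter configuration no two roots can coincide, so $f(\tau)=0$ forces a repeated unit-modulus root and hence $\tau=2e^{i\theta}+e^{-2i\theta}\in\Delta$. Once $\{f=0\}=\Delta$ is established, the constancy of the sign of $f$ on each of the two components of $\CC\setminus\Delta$, together with $f(0)=-27<0$ and $f(\tau)\to+\infty$ as $|\tau|\to\infty$, identifies $\{f<0\}$ with the bounded component, and the remainder of your argument goes through as written.
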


A computation of the trace of a product of two complex reflections leads to the following result, in the spirit of \cite{Pra}:

\begin{Lemm}\label{traceformula}
	Let $R_1,R_2$ be two complex reflections with rotation factors $\eta_1=e^{i\theta_1},\eta_2=e^{i\theta_2}$. Then:
	\begin{enumerate}[i)]
		\item If their mirrors intersect with angle $\phi$,
		\begin{equation}
		\begin{aligned}
		\operatorname{Tr}(R_1R_2^{-1})= & \ (2e^{i\frac{\theta_2-\theta_1}{3}}+e^{-2i\frac{\theta_2-\theta_1}{3}}) \\
		&-4\sin^2(\phi)\sin(\theta_1/2)\sin(\theta_2/2)e^{-i\frac{\theta_2-\theta_1}{6}};
		\end{aligned}
		\end{equation}
		\item If their mirrors are disjoint and at distance $l$ from each other,
		\begin{equation}
		\begin{aligned}
		\operatorname{Tr}(R_1R_2^{-1})= & \ (2e^{i\frac{\theta_2-\theta_1}{3}}+e^{-2i\frac{\theta_2-\theta_1}{3}}) \\
		&+4\sinh^2(l/2)\sin(\theta_1/2)\sin(\theta_2/2)e^{-i\frac{\theta_2-\theta_1}{6}}.
		\end{aligned}
		\end{equation}
	\end{enumerate}
\end{Lemm}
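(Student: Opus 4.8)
The plan is to compute $\operatorname{Tr}(R_1 R_2^{-1})$ directly from the explicit matrix model set up in Section~\ref{reflnorm}. The normalization there gives lifts $\tilde{R_1}, \tilde{R_2}$ (Lemma~\ref{reflnormlemma}) in the basis $(c_1,c_2,c_3)$ adapted to the Hermitian form $H$, with the off-diagonal entries encoding the angle/distance parameters $r_k$ and the phase $u = e^{i\alpha/3}$. Since the trace is conjugation-invariant and independent of the basis, I am free to use these $\operatorname{SU}(2,1)$ representatives. The first step is therefore to write down the lift $\tilde{R_2}^{-1}$ explicitly: because $R_2$ is a complex reflection of rotation factor $\eta_2$, its inverse is the complex reflection with the same mirror and rotation factor $\eta_2^{-1}$, so $\tilde{R_2}^{-1}$ is obtained from the $\tilde{R_2}$ of Lemma~\ref{reflnormlemma} by replacing $\eta_2$ with $\eta_2^{-1}$ (up to the global $\operatorname{SU}(2,1)$ cube-root normalization, which I must track carefully).

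Next I would carry out the matrix multiplication $\tilde{R_1}\tilde{R_2}^{-1}$ and extract only the three diagonal entries — the full product is not needed. The point of the normalization is that the mirror data of $R_1$ and $R_2$ intersect only through the single parameter $r_3 = |\langle c_1, c_2\rangle|$, which is $\cos\phi$ in the intersecting case and $\cosh(l/2)$ in the ultraparallel case. So I expect the diagonal entries to depend on $\eta_1, \eta_2$ and on $r_3$ alone, and the trace to split into a term independent of $r_3$ plus a term quadratic in the relevant off-diagonal entries (which carry a factor $r_3 u$ and $r_3 u^{-1}$, whose product is $r_3^2$, eliminating $u$). The $r_3$-independent part should collapse, after factoring out the $\operatorname{SU}(2,1)$ normalization $\eta_1^{1/3}\eta_2^{-1/3}$-type phases, to the deltoid expression $2e^{i(\theta_2-\theta_1)/3} + e^{-2i(\theta_2-\theta_1)/3}$, which is exactly the trace of the regular elliptic $R_1 R_2^{-1}$ when the mirrors coincide (angle zero / distance zero).

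The remaining $r_3$-dependent correction is where the two cases of the lemma diverge. The cleanest way to present both at once is to write the correction term, after simplification, in the form $c\,(\eta_1-1)(\eta_2^{-1}-1)\,r_3^2 \cdot(\text{phase})$, and then rewrite $(\eta_1 - 1)(\eta_2^{-1}-1)$ using the half-angle identity $\eta - 1 = 2i\sin(\theta/2)e^{i\theta/2}$. This produces the common factor $-4\sin(\theta_1/2)\sin(\theta_2/2)$ together with a residual phase that I would check equals $e^{-i(\theta_2-\theta_1)/6}$ after absorbing the cube-root normalizations. The sign and the substitution $r_3^2 = \cos^2\phi$ versus $r_3^2 = \cosh^2(l/2)$ then distinguish the two formulas: in the intersecting case I use $\cos^2\phi = 1 - \sin^2\phi$, and the constant part absorbs the ``$1$'', leaving $-4\sin^2\phi\cdots$; in the ultraparallel case $\cosh^2(l/2) = 1 + \sinh^2(l/2)$ similarly leaves $+4\sinh^2(l/2)\cdots$.

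The main obstacle I anticipate is bookkeeping of the $\operatorname{SU}(2,1)$ normalization phases: the lifts $\tilde{R_k}$ in $\operatorname{SU}(2,1)$ involve cube roots $\eta_k^{\pm 1/3}$, and making the final phase come out as exactly $e^{-i(\theta_2-\theta_1)/6}$ (rather than off by a stray cube root of unity) requires consistently choosing the determinant-one representatives and verifying that the product $\tilde{R_1}\tilde{R_2}^{-1}$ indeed lies in $\operatorname{SU}(2,1)$ with the intended lift. A secondary subtlety is confirming that the $u = e^{i\alpha/3}$ factors cancel in the trace, so that the formula depends only on $\theta_1, \theta_2$ and the single geometric invariant $\phi$ or $l$, as the statement claims; this should follow because each off-diagonal entry of $\tilde{R_1}$ pairs with a conjugate-phase entry of $\tilde{R_2}^{-1}$ in the diagonal of the product, but it is worth checking explicitly before stating the result.
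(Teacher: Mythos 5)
Your proposal is correct and follows exactly the paper's own (very terse) proof: compute $\tilde{R_1}\tilde{R_2}^{-1}$ from the normalized lifts of Lemma \ref{reflnormlemma} and simplify. Your anticipated structure checks out in detail --- the diagonal of the product is $\bigl(\eta_1+r_3^2(\eta_1-1)(\eta_2^{-1}-1),\ \eta_2^{-1},\ 1\bigr)$, the $u$'s cancel as you predicted, and after multiplying by the $\operatorname{SU}(2,1)$ normalization $\eta_1^{-1/3}\eta_2^{1/3}$ and substituting $r_3^2=1\mp(\cdot)$, the constant part becomes $\eta_1\eta_2^{-1}+2$ times that phase, i.e.\ the deltoid point, while the half-angle identity yields the correction term with phase $e^{-i(\theta_2-\theta_1)/6}$.
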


\begin{proof}
		We use the normalization of Section \ref{reflnorm} and Lemma \ref{reflnormlemma}. We compute the matrix products and the formulas follow from trigonometric manipulations.
\end{proof}

\begin{Rema}
	These traces are only defined up to multiplication by $\omega$.
\end{Rema}

A geometrical consequence of the previous formulas may be given using the curve $\Delta$.

\begin{Prop}\label{tangent}
	Let $R_1,R_2$ be two complex reflections with rotation factors $\eta_1=e^{i\theta_1},\eta_2=e^{i\theta_2}$. Then $\operatorname{Tr}(R_1R_2^{-1})$ is on the tangent line to the deltoid $\Delta$, at the point of parameter $(\theta_2-\theta_1)/3$.
\end{Prop}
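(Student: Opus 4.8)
The plan is to read the result directly off Lemma \ref{traceformula}, once the deltoid is parameterized and its tangent direction is computed. Write $\delta(\theta)=2e^{i\theta}+e^{-2i\theta}$ for the standard parameterization of $\Delta$, and set $\theta_0=(\theta_2-\theta_1)/3$. The first summand appearing in both formulas of Lemma \ref{traceformula} is exactly $2e^{i\theta_0}+e^{-2i\theta_0}=\delta(\theta_0)$, i.e.\ the point of $\Delta$ of parameter $\theta_0$. So it suffices to show that the remaining correction term is a real scalar multiple of the tangent vector $\delta'(\theta_0)$; the trace will then automatically lie on the tangent line at $\theta_0$.

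First I would compute the tangent direction. Differentiating gives $\delta'(\theta)=2i(e^{i\theta}-e^{-2i\theta})$, and factoring out $e^{-i\theta/2}$ yields $e^{i\theta}-e^{-2i\theta}=e^{-i\theta/2}(e^{3i\theta/2}-e^{-3i\theta/2})=2i\sin(3\theta/2)\,e^{-i\theta/2}$, hence $\delta'(\theta)=-4\sin(3\theta/2)\,e^{-i\theta/2}$. In particular, since the scalar $-4\sin(3\theta_0/2)$ is real, the tangent line to $\Delta$ at parameter $\theta_0$ is the real affine line $\{\delta(\theta_0)+s\,e^{-i\theta_0/2}:s\in\RR\}$, whose direction is governed by the phase $e^{-i\theta_0/2}$.

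Now I would match this with the trace formula. The decisive observation is that the phase of the correction term is $e^{-i(\theta_2-\theta_1)/6}=e^{-i\theta_0/2}$, exactly the tangent direction just computed. Its coefficient, namely $-4\sin^2(\phi)\sin(\theta_1/2)\sin(\theta_2/2)$ in the intersecting case and $+4\sinh^2(l/2)\sin(\theta_1/2)\sin(\theta_2/2)$ in the disjoint case, is a real number in both instances. Therefore $\operatorname{Tr}(R_1R_2^{-1})=\delta(\theta_0)+s\,e^{-i\theta_0/2}$ for some $s\in\RR$, which places it on the tangent line to $\Delta$ at parameter $\theta_0$, as claimed.

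The computation is essentially routine, so the proposition is really a geometric repackaging of Lemma \ref{traceformula}; the only point requiring care is the degenerate situation $\sin(3\theta_0/2)=0$, where $\theta_0$ is one of the three cusps of $\Delta$ and $\delta'(\theta_0)$ vanishes. There the ``tangent line'' must be read as the limiting tangent direction, still spanned by $e^{-i\theta_0/2}$, and the statement persists by continuity. I would include a short remark to keep the claim meaningful at the cusps.
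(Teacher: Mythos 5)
Your proof is correct and follows essentially the same route as the paper: compute $\delta'(\theta)=-4\sin(3\theta/2)e^{-i\theta/2}$, identify the tangent line at parameter $\theta_0=(\theta_2-\theta_1)/3$ as $\{\delta(\theta_0)+s e^{-i\theta_0/2}: s\in\RR\}$, and observe that the correction term in Lemma \ref{traceformula} is a real multiple of $e^{-i\theta_0/2}$. Your added care about the cusps $\sin(3\theta_0/2)=0$ is a reasonable refinement that the paper glosses over, but it does not change the argument.
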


\begin{proof}
	Let us compute the tangent vector to $\Delta$ at the point of parameter $\theta$.
	$$\frac{d}{d\theta}(2e^{i\theta}+e^{-2i\theta})=2i(e^{i\theta}-e^{-2i\theta})=-4\sin(3\theta/2)e^{-i\theta/2}.$$
	Therefore, the tangent line to $\Delta$ at the point of parameter $\theta$ is the line $\{2e^{i\theta}+e^{-2i\theta}+te^{-i\theta/2},t\in\RR\}$. The proposition follows from Lemma \ref{traceformula}.
\end{proof}

\begin{Rema}
	We notice that the two numbers $-4\sin^2(\phi)\sin(\theta_1/2)\sin(\theta_2/2)$ and $4\sinh^2(l/2)\sin(\theta_1/2)\sin(\theta_2/2)$ have opposite signs and therefore parameterize points that lie on the two different halves of the tangent line. Consequently, the fact that the mirrors of $R_1$ and $R_2$ intersect or not in $\mathbb{H}_\CC^2$ imposes the half-tangent line on which this trace is.
\end{Rema}

\begin{Lemm}
	Any point inside $\Delta$ is at the intersection of exactly three lines tangent to $\Delta$. Besides, the three angles parameterizing the three tangency points have sum $0$.
\end{Lemm}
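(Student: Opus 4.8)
The plan is to convert the tangency condition into a single cubic equation whose unit-circle roots are exactly the sought tangency parameters, and then to read off both assertions from the structure of that cubic. Fix $z\in\CC$, and recall from the proof of Proposition \ref{tangent} that the tangent line to $\Delta$ at parameter $\theta$ is $\{P(\theta)+t\,e^{-i\theta/2}:t\in\RR\}$ with $P(\theta)=2e^{i\theta}+e^{-2i\theta}$. Thus $z$ lies on it precisely when $z-P(\theta)$ is a real multiple of $e^{-i\theta/2}$, i.e. when $\operatorname{Im}\big((z-P(\theta))e^{i\theta/2}\big)=0$. Expanding gives the real equation $z\,e^{i\theta/2}-\bar z\,e^{-i\theta/2}-e^{3i\theta/2}+e^{-3i\theta/2}=0$; multiplying by the nonzero factor $e^{3i\theta/2}$ and setting $W=e^{i\theta}$ turns it into the cubic
\[
Q_z(W):=W^3-z\,W^2+\bar z\,W-1=0 .
\]
Since $\theta\mapsto e^{i\theta}$ is a bijection $\RR/2\pi\ZZ\to\{|W|=1\}$, the tangency points of lines through $z$ correspond exactly to the roots of $Q_z$ on the unit circle, and I must show there are precisely three when $z$ is inside $\Delta$.

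Next I would exploit the symmetry of $Q_z$. A direct computation gives $\overline{Q_z(1/\bar W)}\,W^3=-Q_z(W)$, so inversion in the unit circle, $W\mapsto 1/\bar W$, permutes the roots, its fixed points being exactly the unit-circle roots. As this is an involution of a three-element root set, the number of unit-circle roots is odd, hence $1$ or $3$. To separate the cases I locate the locus of a double root on the circle: solving $Q_z(W)=0$ and $Q_z'(W)=0$ simultaneously eliminates to $z=2W+W^{-2}$, which on $|W|=1$ is exactly the deltoid parametrization $z=2e^{i\theta}+e^{-2i\theta}$. Hence $Q_z$ has a double root on the circle iff $z\in\Delta$; and for $z\notin\Delta$ all three roots are simple, since a double root off the circle would, via the involution (which preserves multiplicities), produce a second distinct double root, forcing degree $\geq 4$.

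Finally I would run a continuity argument. For $z\in\CC\setminus\Delta$ the three simple roots depend continuously on $z$, and a root can reach the unit circle only by colliding with its partner $1/\bar W$, i.e. at a double root on the circle, which forces $z\in\Delta$; hence the count of unit-circle roots is constant on each component of $\CC\setminus\Delta$. At the interior point $z=0$ one gets $W^3=1$ with all three roots $1,e^{2i\pi/3},e^{4i\pi/3}$ on the circle, so the bounded component (which contains $0$, since $|P(\theta)|^2=5+4\cos 3\theta\geq 1$ keeps $\Delta$ off the origin) gives three lines; at a large real $z=R$ the factorization $(W-1)(W^2+(1-R)W+1)$ has a quadratic factor with two positive reciprocal real roots off the circle, leaving one unit-circle root, so the unbounded component gives one. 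Three distinct parameters yield tangent directions $e^{-i\theta/2}$ that are pairwise distinct, hence three distinct tangent lines. For the angle sum, the product of the roots of the monic $Q_z$ equals $-(-1)=1$; writing the unit-circle roots as $e^{i\theta_1},e^{i\theta_2},e^{i\theta_3}$ gives $e^{i(\theta_1+\theta_2+\theta_3)}=1$, i.e. $\theta_1+\theta_2+\theta_3\equiv 0\pmod{2\pi}$.

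The main obstacle is the continuity/invariance step: one must argue rigorously that, away from $\Delta$, no root can even touch the unit circle, so the count stays locally constant. The pairing symmetry is precisely what makes this work, since any unit-circle contact of a simple root forces a collision with its partner, hence a double root, which occurs only on $\Delta$; the two sample points then fix the constant values $3$ and $1$ on the two components. A minor point to treat is the behaviour at the three cusps of $\Delta$ (where $e^{-i\theta/2}$ degenerates): there the relevant tangent line is the cusp tangent, and the computation above remains valid with $\theta$ read as the cusp parameter.
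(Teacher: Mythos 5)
Your proof is correct, but it follows a genuinely different route from the paper's. The paper writes the given point in tangent-line coordinates as $e^{i\theta}+te^{-i\theta/2}$ and solves directly for the other tangency parameters $\alpha$: the resulting trigonometric equation factors as $\sin\left(\tfrac{\alpha-\theta}{2}\right)\left(t-2\cos\left(\tfrac{\theta}{2}+\alpha\right)\right)=0$, giving the three explicit solutions $\theta$ and $\pm\gamma-\theta/2$ with $\gamma=\arccos(t/2)$, whose sum is visibly $0$. You instead encode all tangency parameters at once as the unit-circle roots of the self-inversive cubic $Q_z(W)=W^3-zW^2+\bar zW-1$, use the inversion symmetry $W\mapsto 1/\bar W$ to force an odd count, identify the double-root-on-circle locus with $\Delta$ itself, and pin down the counts ($3$ inside, $1$ outside) by continuity from the sample points $z=0$ and $z=R\gg 0$; the angle sum then falls out of the constant term of $Q_z$ by Vieta. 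What your version buys: it is intrinsic in $z$ rather than presupposing a $(\theta,t)$-presentation of the point; it makes rigorous the inside/outside dichotomy that the paper leaves implicit (the paper's second family of solutions exists only when $|t|\le 2$, and the distinctness of the three parameters is not checked there); and it yields the complementary fact that points outside $\Delta$ lie on exactly one tangent line. What the paper's version buys: it is much shorter, requires no topological step, and produces closed-form expressions for the three angles. Your flagged continuity step is indeed the delicate point, but your pairing argument closes it: for $z\notin\Delta$ all roots are simple, so the three root branches vary continuously and the action of the involution on them is locally constant; a transposed pair must stay off the circle (a circle root is a fixed point of the involution) while a fixed branch stays on it, so the count is constant on each component of $\CC\setminus\Delta$. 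No gap remains.
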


\begin{proof}
	In the proof of Proposition \ref{tangent}, we showed that the line tangent to $\Delta$ at the point of parameter $\theta$ is $\{2e^{i\theta}+e^{-2i\theta}+te^{-i\theta/2},t\in\RR\}$. Note that this set is equal to $\{e^{i\theta}+te^{-i\theta/2},t\in\RR\}$; indeed,
	$$2e^{i\theta}+e^{-2i\theta}+te^{-i\theta/2}=e^{i\theta}+e^{-i\theta/2}(2\cos(3\theta/2)+t).$$ 
	Let us now consider a point $e^{i\theta}+te^{-i\theta/2}$ on this tangent line. We are searching for which angles $\alpha$ this point is also on the tangent line at the point of parameter $\alpha$, i.e.\ for which values of $\alpha$ there exists a real number $s$ verifying $e^{i\theta}+te^{-i\theta/2}=e^{i\alpha}+se^{-i\alpha/2}$. We are therefore looking for the values of $\alpha$ for which $$e^{i(\theta+\alpha/2)}+te^{i(\alpha-\theta)/2}-e^{3i\alpha/2}\in\RR,$$ which leads us to solving the equation 
	$$\sin\left(\theta+\frac{\alpha}{2}\right) +t\sin\left(\frac{\alpha-\theta}{2}\right) -\sin\left(\frac{3\alpha}{2}\right) =0,$$
	or equivalently
	$$\sin\left(\frac{\alpha-\theta}{2}\right)\left(t-2\cos\left(\frac{\theta}{2}+\alpha\right)\right)=0.$$
	The solutions are the numbers $\alpha$ such that $\alpha=\theta$ or $\cos\left(\frac{\theta}{2}+\alpha\right)=\frac{t}{2}$. The three possible values for $\alpha$ are
	$$\alpha_1=\theta, \ \alpha_2=\gamma-\frac{\theta}{2}, \ \alpha_3=-\gamma-\frac{\theta}{2}$$
	with $\gamma=\arccos(t/2)\in[0,\pi]$. They satisfy $\alpha_1+\alpha_2+\alpha_3=0$.
\end{proof}

If we know the value of $\operatorname{Tr}(R_1R_2^{-1})$, then there are exactly three lines through it that are tangent to $\Delta$, so three possible values for $(\theta_2-\theta_1)/3$. See Figure \ref{tangents}.

\begin{figure}
	\centering
	\scalebox{0.3}{\includegraphics{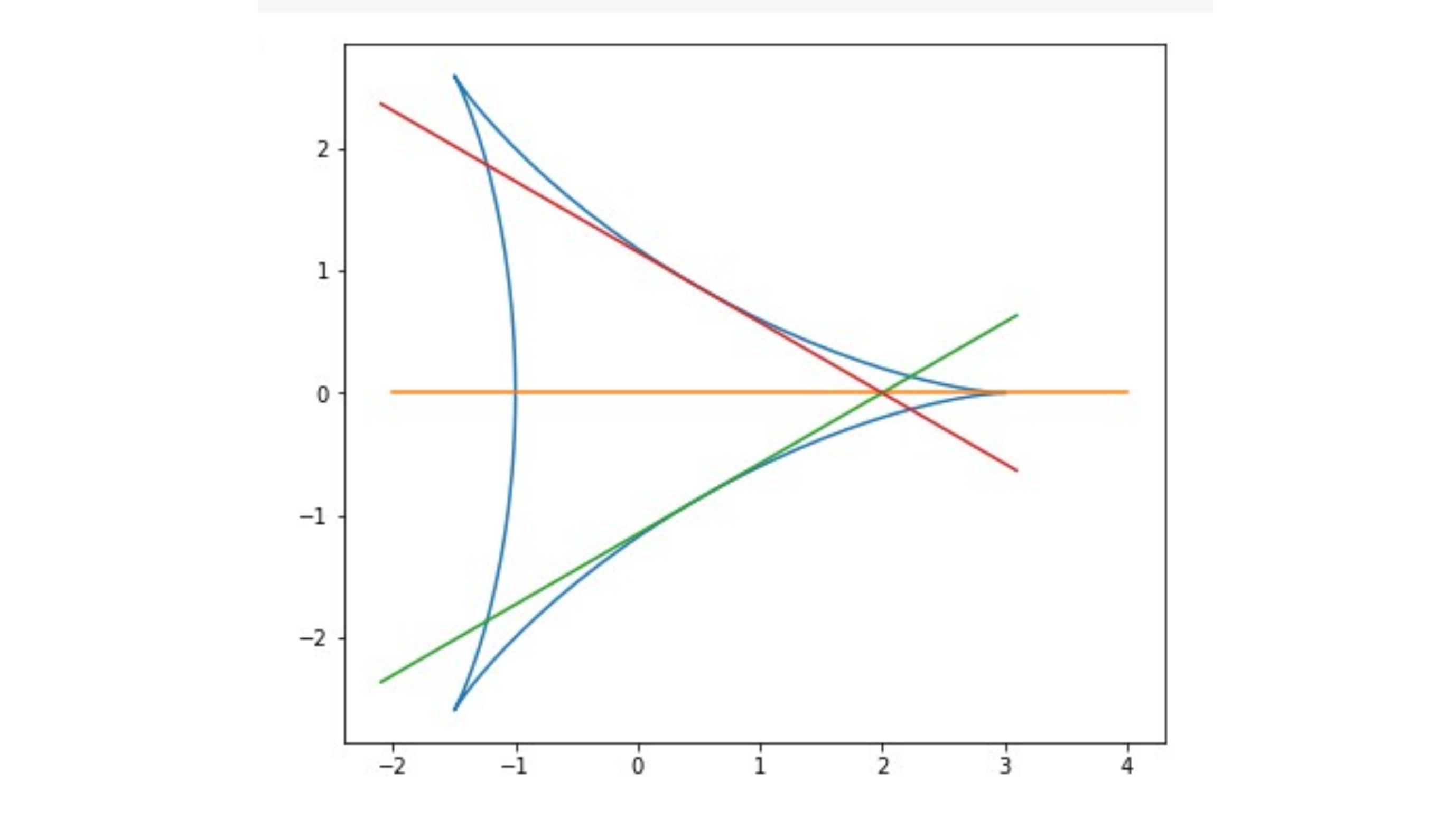}}
	\caption{The three tangent lines through a point. \label{tangents}}
\end{figure}

We will now interpret this result in terms of conjugacy classes in $\operatorname{SU}(2,1)$.

\begin{Coro}\label{tgfoot}
	Let $T$ be the trace of a regular elliptic element, and $\lambda_-$ its eigenvalue of negative type. Then, $T$ lies on a tangent line to $\Delta$ whose foot is the number $2\lambda_-+\lambda_-^{-2}$.
\end{Coro}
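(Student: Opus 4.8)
The plan is to reduce everything to the normal form of a regular elliptic lift in $\operatorname{SU}(2,1)$ and then to recognize the foot $2\lambda_-+\lambda_-^{-2}$ as the point of $\Delta$ whose parameter is exactly the argument of $\lambda_-$. First I would diagonalize: by Remark \ref{ellip} a regular elliptic element has a $\operatorname{U}(2,1)$-representative $E(\alpha_1,\alpha_2)=\operatorname{diag}(e^{i\alpha_1},e^{i\alpha_2},1)$, in which the negative-type eigenvector is the last basis vector, with eigenvalue $1$. Passing to $\operatorname{SU}(2,1)$ multiplies all three eigenvalues by a fixed cube root of $e^{-i(\alpha_1+\alpha_2)}$; choosing $\zeta=e^{-i(\alpha_1+\alpha_2)/3}$ gives the three $\operatorname{SU}(2,1)$-eigenvalues
$$\lambda_1=e^{i(2\alpha_1-\alpha_2)/3},\quad \lambda_2=e^{i(2\alpha_2-\alpha_1)/3},\quad \lambda_-=e^{-i(\alpha_1+\alpha_2)/3},$$
with $\lambda_-$ the negative-type one, so that $T=\lambda_1+\lambda_2+\lambda_-$.

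Next I would identify the foot. Writing $\theta:=-(\alpha_1+\alpha_2)/3$, so that $\lambda_-=e^{i\theta}$, one has $2\lambda_-+\lambda_-^{-2}=2e^{i\theta}+e^{-2i\theta}$, which is by definition the point of $\Delta$ of parameter $\theta$. Thus the claimed foot indeed lies on $\Delta$, and the problem reduces to showing that $T$ lies on the tangent line to $\Delta$ at that point.

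For the final step I would invoke the description of the tangent line obtained in the proof of Proposition \ref{tangent}: the tangent to $\Delta$ at parameter $\theta$ is $\{2e^{i\theta}+e^{-2i\theta}+te^{-i\theta/2}:t\in\RR\}$. Hence it suffices to check that $\bigl(T-(2\lambda_-+\lambda_-^{-2})\bigr)e^{i\theta/2}$ is real. Multiplying through by $e^{i\theta/2}=e^{-i(\alpha_1+\alpha_2)/6}$, the two regular eigenvalues contribute $e^{i(\alpha_1-\alpha_2)/2}+e^{-i(\alpha_1-\alpha_2)/2}=2\cos\tfrac{\alpha_1-\alpha_2}{2}$, while the $-\lambda_--\lambda_-^{-2}$ terms contribute $-e^{-i(\alpha_1+\alpha_2)/2}-e^{i(\alpha_1+\alpha_2)/2}=-2\cos\tfrac{\alpha_1+\alpha_2}{2}$; both sums are real, so $T$ sits on the tangent line, as claimed.

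The computation is elementary trigonometry, so the only genuine subtlety — the point I would watch most carefully — is the cube-root ambiguity: the trace $T$ and the eigenvalue $\lambda_-$ must be read off the \emph{same} $\operatorname{SU}(2,1)$-lift. Changing the lift by a cube root of unity $\omega$ sends $T\mapsto\omega T$ and $\lambda_-\mapsto\omega\lambda_-$, whence the foot $2\lambda_-+\lambda_-^{-2}\mapsto\omega(2\lambda_-+\lambda_-^{-2})$ (using $\omega^{-2}=\omega$); the whole configuration rotates consistently by $\omega$, so the statement is well posed independently of the choice. The conceptual content, rather than the algebra, is the observation that the natural tangency parameter of the foot is precisely $\arg\lambda_-$.
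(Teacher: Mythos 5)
Your proof is correct, but it takes a different route from the paper's. The paper proves the corollary in two lines by invoking machinery it has just built: it writes the regular elliptic element as a product $R_1R_2^{-1}$ of two complex reflections whose mirrors are the two stable complex lines through the fixed point, reads off $\lambda_-=e^{i(\theta_2-\theta_1)/3}$, and then quotes the trace formula of Lemma \ref{traceformula} together with Proposition \ref{tangent}, which already locate the trace on the tangent line at parameter $(\theta_2-\theta_1)/3$. You instead bypass the reflection decomposition entirely: you diagonalize, normalize into $\operatorname{SU}(2,1)$, and verify by direct trigonometry that $\bigl(T-(2\lambda_-+\lambda_-^{-2})\bigr)e^{i\theta/2}$ is real, using only the explicit parameterization of the tangent line from the proof of Proposition \ref{tangent}. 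Your computation checks out (the two regular eigenvalues contribute $2\cos\tfrac{\alpha_1-\alpha_2}{2}$ and the remaining terms $-2\cos\tfrac{\alpha_1+\alpha_2}{2}$), and your closing remark on the cube-root ambiguity is a worthwhile point the paper leaves implicit. What your version buys is self-containedness and the conceptual punchline that the tangency parameter is $\arg\lambda_-$; what the paper's version buys is the explicit identification $\lambda_-=e^{i(\theta_2-\theta_1)/3}$ in terms of the rotation factors of a decomposition, which is precisely what Remark \ref{vapneg} and the subsequent parameterization of decompositions rely on, so your route would leave that identification still to be established separately.
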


\begin{proof}
	Let $A\in\operatorname{SU}(2,1)$ be a regular elliptic element with trace $T$. We may assume, up to conjugation in $\operatorname{SU}(2,1)$, that $A$ fixes the origin. We know that there exist two complex reflections $R_1,R_2$ with rotation factors $\eta_1=e^{i\theta_1},\eta_2=e^{i\theta_2}$ such that $A=R_1R_2^{-1}$ (their mirrors are the two stable complex lines of $A$). Consequently, we have $\lambda_-=e^{i\frac{\theta_2-\theta_1}{3}}$. We conclude thanks to the formula in Lemma \ref{traceformula} and Proposition \ref{tangent}.
\end{proof}

\begin{Rema}\label{vapneg}
	\begin{itemize}
		\item We showed that if $A=R_1R_2^{-1}$ with $R_1,R_2$ of rotation factors $\eta_1=e^{i\theta_1},\eta_2=e^{i\theta_2}$, then the negative type eigenvalue of $A$ is equal to $e^{i\frac{\theta_2-\theta_1}{3}}$.
		\item The trace $T$ of an element in $\operatorname{SU}(2,1)$ is enough to compute its eigenvalues; however, determining its conjugacy class amounts to choosing which one is of negative type. Consider the three tangent lines meeting in $T$. Corollary \ref{tgfoot} tells us that choosing the conjugacy class is equivalent to choosing one tangent line among the three.
	\end{itemize}
\end{Rema}

We can now describe more precisely the triples $(R_1,R_2,R_3)$ that decompose a given spherical decomposable triple $(A,B,C)$. Indeed, we can obtain the possible angles $(\theta_1,\theta_2,\theta_3)$ of the complex reflections, and the angles $(\phi_1,\phi_2,\phi_3)$ at which the mirrors meet. Let us start with three regular elliptics $A,B,C$ whose product is the identity. We assume that the three conjugacy classes are known; we call $\lambda_A,\lambda_B,\lambda_C$ the three eigenvalues of negative type.
\begin{itemize}
	\item According to Theorem \ref{caractdecomp}, a spherical decomposition exists if and only if we have $(\lambda_A\lambda_B\lambda_C)^3=1$.
	\item Using Remark \ref{vapneg}, we obtain $\lambda_A=e^{i\frac{\theta_2-\theta_1}{3}}$, $\lambda_B=e^{i\frac{\theta_3-\theta_2}{3}}$, $\lambda_C=e^{i\frac{\theta_1-\theta_3}{3}}$ which gives us $(\theta_1,\theta_2,\theta_3)=(\theta,\theta+\beta,\theta+\gamma)$ with $\beta,\gamma$ fixed by $\lambda_A,\lambda_B,\lambda_C$ and $\theta$ a free parameter. 
	\item We now have
	$$\operatorname{Tr}(A)=(2e^{i\frac{\beta}{3}}+e^{-2i\frac{\beta}{3}})-4\sin^2(\phi_3)\sin(\theta/2)\sin((\theta+\beta)/2)e^{-i\frac{\beta}{6}}.$$
	Writing the analogous formulas for the traces of $B$ and $C$, then taking the real part in these equations, leads to the following expressions of the complex angles in terms of $\theta$:
	$$\sin^2(\phi_1)=\frac{2\cos((\beta-\gamma)/3)+\cos(2(\beta-\gamma)/3)-\operatorname{Re}(\operatorname{Tr}(B))}{2(\cos((\beta-\gamma)/2)-\cos(\theta+(\beta-\gamma)/2))\cos((\beta-\gamma)/6)},$$
	$$\sin^2(\phi_2)=\frac{2\cos(\gamma/3)+\cos(2\gamma/3)-\operatorname{Re}(\operatorname{Tr}(C))}{2(\cos(\gamma/2)-\cos(\theta+\gamma/2))\cos(\gamma/6)},$$
	$$\sin^2(\phi_3)=\frac{2\cos(\beta/3)+\cos(2\beta/3)-\operatorname{Re}(\operatorname{Tr}(A))}{2(\cos(\beta/2)-\cos(\theta+\beta/2))\cos(\beta/6)}.$$
	In particular, the right hand sides must belong to $]0,1]$, which gives us three domains of definition for $\theta$. The three domains have a common intersection, since the decomposability is assumed in the first point.
	\item At the end, we obtain a domain $D\subset[0,2\pi]$ for $\theta$, and the choice of such a $\theta$ defines $\theta_1=\theta,\theta_2=\theta+\beta,\theta_3=\theta+\gamma$ and $\phi_1,\phi_2,\phi_3$.
\end{itemize}

\section{Character varieties and parameterizations}\label{param}

In this section, we relate the decomposability property to the character variety of $\mathbb{F}_2$ in $\operatorname{SU}(2,1)$. A general description is given in \cite{Aco}. In order to get a complete parameterization, we shall describe the triangle defined by the mirrors of $R_1,R_2,R_3$.

\subsection{Complex triangles and their reflection group}\label{reflgrp}

We study triangles formed by triples of complex lines (see Brehm \cite{Bre} and Pratoussevitch \cite{Pra}). We use the conventions and notations of the latter.

\begin{Defi}\label{angles}
	Let $C_1,C_2,C_3$ be three complex lines intersecting pairwise, with polar vectors $c_1,c_2,c_3$. Let $\phi_k$ be the complex angle between $C_{k-1}$ and $C_{k+1}$, and $r_k=\cos(\phi_k)$, $s_k=\sin(\phi_k)$. 
	\begin{enumerate} [i)]
		\item The triple $(C_1,C_2,C_3)$ is called a $(\phi_1,\phi_2,\phi_3)$-triangle;
		\item The \textit{angular invariant} of the complex triangle is the number
		$$\alpha=\arg\left(\prod_{k=1}^{3}\langle c_{k-1},c_{k+1}\rangle\right),$$
		which does not depend on the choices of $c_1,c_2,c_3$.
	\end{enumerate}
\end{Defi}

The angular invariant describes the set of $(\phi_1,\phi_2,\phi_3)$-triangles up to $\operatorname{PU}(2,1)$.

\begin{Prop}\label{triangleclassif}
	\emph{\cite{Pra}} A $(\phi_1,\phi_2,\phi_3)$-triangle is determined uniquely up to holomorphic isometry by its angular invariant $\alpha$.
	For $\alpha\in[0,2\pi[$, there exists a $(\phi_1,\phi_2,\phi_3)$-triangle with angular invariant $\alpha$ if and only if $\alpha$ belongs to the interval defined by:
	$$\cos(\alpha)<\frac{r_1^2+r_2^2+r_3^2-1}{2r_1r_2r_3}.$$
\end{Prop}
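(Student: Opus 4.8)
The uniqueness part is essentially already available to us: the normalization of Section \ref{reflnorm} shows that a triple of pairwise-intersecting complex lines $(C_1,C_2,C_3)$ with prescribed angles $\phi_k$ (so that $r_k=\cos(\phi_k)=|\langle c_{k-1},c_{k+1}\rangle|$) is determined, up to the action of $\operatorname{PU}(2,1)$, by the single extra scalar datum $u=e^{i\alpha/3}$, i.e.\ by the angular invariant $\alpha$. More precisely, the plan is to invoke Proposition \ref{reflexistence}: having normalized the polar vectors $c_k$ so that $\langle c_k,c_k\rangle=1$ and $|\langle c_{k-1},c_{k+1}\rangle|=r_k$, the only remaining invariant of the configuration under $\operatorname{PU}(2,1)$ is the common argument $\operatorname{arg}\bigl(\prod_k\langle c_{k-1},c_{k+1}\rangle\bigr)=\alpha$. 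Two triangles with the same $(\phi_1,\phi_2,\phi_3)$ and the same $\alpha$ therefore satisfy all three conditions of Proposition \ref{reflexistence} and are holomorphically isometric, which gives uniqueness.

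For the existence part, the key point is a Gram-matrix positivity argument. The plan is to reverse the construction: given $(\phi_1,\phi_2,\phi_3)$ and a candidate $\alpha$, I write down the Hermitian matrix $H$ of Section \ref{reflnorm},
\begin{equation*}
H=\begin{pmatrix}
1 & r_3u & r_2u^{-1} \\
r_3u^{-1} & 1 & r_1u \\
r_2u & r_1u^{-1} & 1
\end{pmatrix},\qquad u=e^{i\alpha/3},
\end{equation*}
and observe that such a triangle of \emph{intersecting} complex lines exists precisely when $H$ is the Gram matrix of a basis of polar vectors of positive type spanning a Hermitian form of signature $(2,1)$ in which each $c_k$ has $\langle c_k,c_k\rangle=1>0$. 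Concretely, a triple of pairwise-intersecting complex lines with these data exists if and only if $H$ has signature $(2,1)$: the three polar vectors are of positive type, and the ambient form must have one negative direction so that the $\mathbb{P}(W_k)$ actually meet $V^-$ and the pairwise intersection points lie in $\mathbb{H}_\CC^2$ (this is where the hypothesis $r_k<1$, i.e.\ genuine angles rather than distances, enters). So existence reduces to the statement that $H$ has signature $(2,1)$.

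The computational heart, and the step I expect to be the main obstacle, is then to translate the signature condition on $H$ into the stated inequality. I would compute $\det H$ and check the sign of the leading principal minors. Each $2\times 2$ leading minor equals $1-r_k^2=s_k^2>0$, so the top-left $2\times2$ block is positive definite; hence $H$ has signature $(2,1)$ if and only if $\det H<0$. A direct expansion gives
$$\det H = 1-(r_1^2+r_2^2+r_3^2)+2r_1r_2r_3\cos(\alpha),$$
the only $\alpha$-dependence entering through $u^3+u^{-3}=2\cos\alpha$ in the off-diagonal product terms (the phases $u$ attached to each factor are arranged exactly so that the two order-three cyclic products contribute $r_1r_2r_3(u^3+u^{-3})$). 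Requiring $\det H<0$ is then literally
$$2r_1r_2r_3\cos(\alpha)<r_1^2+r_2^2+r_3^2-1,$$
i.e.\ $\cos(\alpha)<\frac{r_1^2+r_2^2+r_3^2-1}{2r_1r_2r_3}$, as claimed. The delicate bookkeeping is making sure the off-diagonal phases combine into $2\cos\alpha$ with no residual phase, and confirming that signature $(2,1)$ (rather than, say, $(3,0)$) is exactly the geometric condition for three \emph{mutually intersecting} complex lines inside $\mathbb{H}_\CC^2$; once those two points are pinned down, the inequality follows from the determinant computation.
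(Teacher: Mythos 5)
Your argument is correct, and there is nothing in the paper to compare it against: the proposition is quoted from Pratoussevitch \cite{Pra} without proof, so you have in effect supplied the missing argument, and it is the standard one. Both halves check out against the toolkit the paper does provide. For uniqueness, Proposition \ref{reflexistence} applied to the normalized polar vectors $c_1,c_2,c_3$ does exactly what you want, since an isometry matching the polar points matches the complex lines. For existence, the signature criterion is the right one (three linearly independent vectors of $\CC^{2,1}$ realize a prescribed Hermitian Gram matrix $H$ if and only if $H$ has signature $(2,1)$, and a dependent triple forces $\det H=0$); the leading principal minors $1$ and $1-r_3^2=s_3^2$ are positive, so the condition reduces to $\det H<0$; and the expansion
$$\det H=1-(r_1^2+r_2^2+r_3^2)+r_1r_2r_3\left(u^3+u^{-3}\right)=1-(r_1^2+r_2^2+r_3^2)+2r_1r_2r_3\cos\alpha$$
is as you claim, the phases combining cleanly because each of the two $3$-cycles in the Leibniz expansion contributes $u^{\pm3}$. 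The hypothesis $r_k<1$ then gives $\langle c_{k-1}\boxtimes c_{k+1},c_{k-1}\boxtimes c_{k+1}\rangle=r_k^2-1<0$, so the pairwise intersection points genuinely lie in $\mathbb{H}_\CC^2$; and your use of the strict inequality correctly excludes the boundary case $\det H=0$, where the $c_k$ become dependent and the three vertices collapse to a single point (consistent with the remark following the proposition in the paper).
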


An explicit formula for the matrices of the three complex reflections associated to a $(\phi_1,\phi_2,\phi_3)$-triangle of invariant $\alpha$ is given by Lemma \ref{reflnormlemma}. From now on, we fix angles $\phi_1,\phi_2,\phi_3$, with $r_i=\cos(\phi_i),s_i=\sin(\phi_i)$, and we suppose that the condition of Proposition \ref{triangleclassif} is satisfied. We use the normalization of Section \ref{reflnorm}. Note that the triple $(R_1,R_2,R_3)$ is irreducible by construction.

\begin{Rema}
	\cite{Pra} When $\alpha=\pi$, the three vertices of the triangle lie in a copy of $\mathbb{H}_{\RR}^2$. The corresponding reflection group is then embedded in $\operatorname{PO}(2,1)$, as described in the introduction. When $\cos(\alpha)=\frac{r_1^2+r_2^2+r_3^2-1}{2r_1r_2r_3}$, the three vertices of the triangle coincide.
\end{Rema}

The reflexion matrices will be useful to give explicit parameterizations of subsets of character varieties for $\mathbb{F}_2$ or triangle groups.

\subsection{Character variety of $\mathbb{F}_2$}\label{charvarsection}

Following \cite{Aco}, we review a few notions about $\operatorname{SL}_n(\CC)$-character varieties, in order to transform our results into descriptions of subsets of the character variety of $\mathbb{F}_2$ into $\operatorname{PU}(2,1)$. Let $\Gamma$ be a finitely generated group. Recall that the $\operatorname{SL}_n(\CC)$-character variety of $\Gamma$ is the algebraic quotient
$$\chi_{\operatorname{SL}_n(\CC)}(\Gamma)=\operatorname{Hom}(\Gamma,\operatorname{SL}_n(\CC))//\operatorname{SL}_n(\CC),$$ 
where the quotient is with respect to the action  by conjugation of $\operatorname{SL}_n(\CC)$ on $\operatorname{Hom}(\Gamma,\operatorname{SL}_n(\CC))$.

It is a classical result (see for instance Theorems 1.3 and 3.3 of \cite{Pro}) that the ring of invariant functions of $\operatorname{Hom}(\Gamma,\operatorname{SL}_n(\CC))$ is generated by the trace functions $\tau_\gamma:\rho\mapsto\operatorname{Tr}(\rho(\gamma))$, for $\gamma$ in a finite subset $\{\gamma_1,\ldots,\gamma_k\}$ of $\Gamma$. Consequently, $\chi_{\operatorname{SL}_n(\CC)}(\Gamma)$ is isomorphic to the image of $(\tau_{\gamma_1},\ldots,\tau_{\gamma_k}):\operatorname{Hom}(\Gamma,\operatorname{SL}_n(\CC))\to\CC^k$.

We recall that the character of $\rho\in\operatorname{Hom}(\Gamma,\operatorname{SL}_n(\CC))$ is the function $\chi_\rho:\Gamma\to\CC$ given by $\chi_\rho(\gamma)=\operatorname{Tr}(\rho(\gamma))$. Now, let $\rho,\rho'\in\operatorname{Hom}(\Gamma,\operatorname{SL}_n(\CC))$ be two semi-simple representations. Then $\chi_\rho=\chi_\rho'$ if and only if $\rho$ and $\rho'$ are conjugate (see for example Theorem 1.28 of \cite{LubMag}). This result enables us to identify a semi-simple representation in the character variety with its character.

We now focus on the $\operatorname{SL}_3(\CC)$-character variety of $\mathbb{F}_2=\langle a,b\rangle$. It has already been parameterized, as stated in the following result, which is proven in \cite{Law}:

\begin{Theo}\label{charvar}
	The character variety $\chi_{\operatorname{SL}_3(\CC)}(\mathbb{F}_2)$ is isomorphic to the algebraic set $V$ of $\CC^9$ which is the image of $\operatorname{Hom}(\mathbb{F}_2,\operatorname{SL}_3(\CC))$ by the trace functions of the elements $a,b,ab,ab^ {-1},a^{-1},b^{-1},b^{-1}a^{-1},ba^ {-1},[a,b]$. Furthermore, there exist two polynomials $P,Q\in\CC[X_1,\ldots,X_8]$ such that $(x_1,\ldots,x_8)\in V$ if and only if $x_9^2-Q(x_1,\ldots,x_8)x_9+P(x_1,\ldots,x_8)=0$.
\end{Theo}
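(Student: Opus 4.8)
The plan is to realize $\chi_{\operatorname{SL}_3(\CC)}(\mathbb{F}_2)$ as the image of a simultaneous trace map and then to identify that image concretely as a hypersurface. Since $\mathbb{F}_2=\langle a,b\rangle$, we have $\operatorname{Hom}(\mathbb{F}_2,\operatorname{SL}_3(\CC))=\operatorname{SL}_3(\CC)\times\operatorname{SL}_3(\CC)$ via $\rho\mapsto(A,B)$, and the character variety is the spectrum of the ring of regular functions invariant under simultaneous conjugation. By the first fundamental theorem for matrix invariants (Procesi, Razmyslov), this invariant ring is generated by the trace functions $\operatorname{Tr}(w(A,B))$ as $w$ ranges over words. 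The first task is therefore to show that the nine specific words listed already generate the ring; once this is done, the tuple of trace functions is a closed embedding of $\chi_{\operatorname{SL}_3(\CC)}(\mathbb{F}_2)$ into $\CC^9$, and its image is $V$ by construction.

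For the generation step I would reduce arbitrary word traces to polynomials in the nine generators using the Cayley--Hamilton theorem and its polarizations. For $A\in\operatorname{SL}_3(\CC)$ one has $A^3=\operatorname{Tr}(A)A^2-\operatorname{Tr}(A^{-1})A+I$, since the second elementary symmetric function of the eigenvalues equals $\operatorname{Tr}(A^{-1})$ when $\det A=1$; this expresses every power $A^k$, and hence $\operatorname{Tr}(A^k)$, in terms of $\operatorname{Tr}(A)$ and $\operatorname{Tr}(A^{-1})$. The fully polarized identity for three matrices furnishes the symmetric sum $\operatorname{Tr}(XYZ)+\operatorname{Tr}(XZY)$ as a polynomial in traces of proper subwords. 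Iterating these reductions collapses every word trace onto a polynomial in $\operatorname{Tr}(A),\operatorname{Tr}(B),\operatorname{Tr}(AB),\operatorname{Tr}(AB^{-1})$, their inverse counterparts $\operatorname{Tr}(A^{-1}),\operatorname{Tr}(B^{-1}),\operatorname{Tr}(B^{-1}A^{-1}),\operatorname{Tr}(BA^{-1})$, and a single antisymmetric remainder that resists further reduction, namely the commutator trace $\operatorname{Tr}([a,b])$. This yields exactly the nine generators, so that $R:=\CC[\chi_{\operatorname{SL}_3(\CC)}(\mathbb{F}_2)]$ is spanned over $R_0:=\CC[X_1,\dots,X_8]$ by $1$ and $x_9$.

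To obtain the relation, set $M=[A,B]$, $t=\operatorname{Tr}(M)=x_9$ and $\bar t=\operatorname{Tr}(M^{-1})=\operatorname{Tr}([b,a])$. The transpose--inverse automorphism $\phi(X)=(X^{\mathsf T})^{-1}$ of $\operatorname{SL}_3(\CC)$ satisfies $\operatorname{Tr}(\phi(w))=\operatorname{Tr}(w^{-1})$; the induced involution on the character variety permutes the eight basic coordinates in pairs ($x_1\leftrightarrow x_5$, $x_2\leftrightarrow x_6$, $x_3\leftrightarrow x_7$, $x_4\leftrightarrow x_8$) while interchanging $t$ and $\bar t$. Consequently $t+\bar t$ and $t\bar t$ are invariant under this symmetry, and using the polarized trace identities I would express them as explicit polynomials $Q,P\in R_0$ (alternatively, reduce the length-eight word trace $\operatorname{Tr}(M^2)$ to $R_0+R_0\,t$ and invoke Cayley--Hamilton for $M$ in the form $\bar t=\tfrac12(t^2-\operatorname{Tr}(M^2))$). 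Since $\bar t=Q-t$, substituting gives $t^2-Qt+P=0$, which is the asserted relation; thus every point of $V$ lies on the hypersurface $\{x_9^2-Qx_9+P=0\}$, and $R\cong R_0[t]/(t^2-Qt+P)$ is a rank-two free $R_0$-module.

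The hard part will be the converse inclusion: showing that every $9$-tuple satisfying the quadratic is genuinely realized by some pair $(A,B)$, so that $V$ is the \emph{whole} hypersurface and not a proper closed subvariety of it. For this I would set up an explicit normal form, fixing $A$ in a standard diagonal or companion shape prescribed by $x_1,x_5$, parameterizing $B$, and solving the polynomial system imposed by matching $x_3,x_4,x_7,x_8$, then checking that the two resulting values of $\operatorname{Tr}([a,b])$ are precisely the two roots of $X^2-QX+P$. The delicate point is uniform control of the fibers of the trace map across the reducible and otherwise non-generic loci, where distinct orbits degenerate and the fiber dimension can jump; the natural route is to treat irreducible pairs first, where the construction is transparent and the two characters are conjugate up to the $\phi$-twist, and then to argue that $V$ is closed and already contains the dense generic stratum of the hypersurface, forcing the two to coincide. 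It is this surjectivity, rather than the trace identities producing $P$ and $Q$, where the substantive work lies.
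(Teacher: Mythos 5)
The paper does not prove this statement at all: it is imported verbatim from Lawton's work, cited as \cite{Law} in the sentence immediately preceding the theorem, so there is no internal argument to compare yours against. On its own terms, your outline is the standard (indeed Lawton's) strategy -- first fundamental theorem for matrix invariants, Cayley--Hamilton in the form $A^3=\operatorname{Tr}(A)A^2-\operatorname{Tr}(A^{-1})A+I$ and its polarization $\operatorname{Tr}(XYZ)+\operatorname{Tr}(XZY)=(\text{traces of subwords})$ to reduce word traces, $\operatorname{Tr}([a,b])$ as the one generator not in $R_0=\CC[x_1,\dots,x_8]$, and the quadratic relation from $t+\bar t$ and $t\bar t$ with $\bar t=\operatorname{Tr}([b,a])$ -- and you correctly locate the two places where the substance lies. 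But both of those places remain assertions in your write-up. The generation step is not a routine iteration: it is not obvious a priori that the reductions terminate with exactly one generator beyond the eight, nor that it can be taken to be $\operatorname{Tr}([a,b])$ rather than, say, $\operatorname{Tr}(A^2B^2AB)$ (which is what appears in the minimal generating set for two generic $3\times3$ matrices); this is the content of explicit computations due to Teranishi, Nakamoto and Lawton, and you would need to reproduce them or cite them.

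Two further specific cautions. First, the inference ``$t\bar t$ is invariant under the transpose-inverse involution $\sigma$, hence lies in $R_0$'' is not valid: the $\sigma$-invariants of $R=R_0\oplus R_0t$ are not contained in $R_0$ (indeed $t+\bar t$ is $\sigma$-invariant, and its membership in $R_0$ is precisely what you are trying to prove, not a consequence of invariance). The legitimate route is the one you give only parenthetically: reduce $\operatorname{Tr}([A,B]^2)$ into $R_0+R_0t$ and apply Cayley--Hamilton to $M=[A,B]$ to get $\bar t=\tfrac12(t^2-\operatorname{Tr}(M^2))\in R_0+R_0t$, which yields $Q$ and $P$ explicitly. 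Second, for the converse inclusion ($V$ is the whole hypersurface), the normal-form, fiber-by-fiber construction you propose first will founder on exactly the degenerate loci you mention; the clean argument is the dimension count you gesture at last: $V$ is closed and irreducible of dimension $2\cdot 8-8=8$ (once generation is known, the trace map factors through a closed embedding of the GIT quotient), the hypersurface $\{x_9^2-Qx_9+P=0\}$ is irreducible of dimension $8$ provided $Q^2-4P$ is not a square in $R_0$ (a fact that must itself be checked), and a closed irreducible $8$-dimensional subvariety of an irreducible $8$-dimensional variety is the whole thing. As it stands your text is a correct road map of \cite{Law}, not a proof.
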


Remaining in the spirit of \cite{Aco}, we define the $\operatorname{SU}(2,1)$-character variety of $\mathbb{F}_2$ as the subset of $\chi_{\operatorname{SL}_3(\CC)}(\mathbb{F}_2)$ given by:
$$\chi_{\operatorname{SU}(2,1)}(\mathbb{F}_2):=\{\chi\in\chi_{\operatorname{SL}_3(\CC)}(\mathbb{F}_2) \ | \ \exists\rho\in\operatorname{Hom}(\mathbb{F}_2,\operatorname{SU}(2,1)),\chi=\chi_\rho\}.$$

Note that if $\rho,\rho'\in\operatorname{Hom}(\mathbb{F}_2,\operatorname{SU}(2,1))$ are conjugate in $\operatorname{SL}_3(\CC)$ and Zariski-dense, they are in fact conjugate in $\operatorname{SU}(2,1)$. We may therefore adapt Theorem \ref{charvar} to $\operatorname{SU}(2,1)$, using that for $A\in\operatorname{SU}(2,1)$, $\operatorname{Tr}(A^{-1})=\overline{\operatorname{Tr}(A)}$.

\begin{Coro}\label{charvarcoro}
	A Zariski-dense representation of $\mathbb{F}_2$ into $\operatorname{SU}(2,1)$ is uniquely determined up to conjugacy by the following traces: $\operatorname{Tr}(A),\operatorname{Tr}(B),$ $\operatorname{Tr}(AB),$ $\operatorname{Tr}(A^{-1}B),\operatorname{Tr}([A,B])$. Besides, $\operatorname{Tr}([A,B])$ is a root of a degree 2 polynomial whose coefficients are polynomials in the first four traces (the one that appears in Theorem \ref{charvar}).
\end{Coro}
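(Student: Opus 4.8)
The plan is to deduce the Corollary directly from Lawton's parameterization (Theorem \ref{charvar}) together with the two reduction principles recalled just above: that for a semi-simple representation the character determines the $\operatorname{SL}_3(\CC)$-conjugacy class, and that two Zariski-dense representations into $\operatorname{SU}(2,1)$ which are $\operatorname{SL}_3(\CC)$-conjugate are already $\operatorname{SU}(2,1)$-conjugate.

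First I would observe that a Zariski-dense representation is irreducible, hence semi-simple, so its $\operatorname{SL}_3(\CC)$-conjugacy class is determined by its character; by Theorem \ref{charvar} this character is recorded by the nine coordinates $x_1,\dots,x_9$. The key step is to exploit the identity $\operatorname{Tr}(A^{-1})=\overline{\operatorname{Tr}(A)}$, valid for any $A\in\operatorname{SU}(2,1)$ because the relation $A^*JA=J$ gives $A^{-1}=JA^*J$, so $A^{-1}$ is conjugate to $A^*=\overline{A}^{\,T}$. Applying this to the words defining $x_5,\dots,x_8$, I would match $\operatorname{Tr}(a^{-1})=\overline{\operatorname{Tr}(a)}$ and $\operatorname{Tr}(b^{-1})=\overline{\operatorname{Tr}(b)}$, and — using $b^{-1}a^{-1}=(ab)^{-1}$ and $ba^{-1}=(ab^{-1})^{-1}$ — the identities $\operatorname{Tr}(b^{-1}a^{-1})=\overline{\operatorname{Tr}(ab)}$ and $\operatorname{Tr}(ba^{-1})=\overline{\operatorname{Tr}(ab^{-1})}$. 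Thus the eight coordinates $x_1,\dots,x_8$ are all determined by the four complex numbers $\operatorname{Tr}(A),\operatorname{Tr}(B),\operatorname{Tr}(AB),\operatorname{Tr}(AB^{-1})$ together with their complex conjugates.

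With this reduction in hand I would conclude the first assertion: the character of a Zariski-dense representation is determined by the five traces $\operatorname{Tr}(A),\operatorname{Tr}(B),\operatorname{Tr}(AB),\operatorname{Tr}(AB^{-1}),\operatorname{Tr}([A,B])$, so two such representations sharing these five values have the same character, are $\operatorname{SL}_3(\CC)$-conjugate by semi-simplicity, and are finally $\operatorname{SU}(2,1)$-conjugate by the second reduction principle. For the second assertion I would transport the defining equation $x_9^2-Q(x_1,\dots,x_8)x_9+P(x_1,\dots,x_8)=0$ of Theorem \ref{charvar}: since $x_1,\dots,x_8$ are polynomial (indeed linear) in the four traces and their conjugates, $\operatorname{Tr}([A,B])=x_9$ is a root of a quadratic whose coefficients $Q,P$ are polynomials in $\operatorname{Tr}(A),\operatorname{Tr}(B),\operatorname{Tr}(AB),\operatorname{Tr}(AB^{-1})$ and their conjugates.

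I expect the only genuine subtlety to be bookkeeping: verifying that the specific words listed in Theorem \ref{charvar} for $x_5,\dots,x_8$ are exactly the inverses of the words for $x_1,\dots,x_4$, so that the conjugation identity $\operatorname{Tr}(W^{-1})=\overline{\operatorname{Tr}(W)}$ applies verbatim. There is also a minor point of phrasing — the coefficients $P,Q$ depend on the four traces \emph{and their complex conjugates}, not on the four traces alone — which I would state explicitly to avoid ambiguity.
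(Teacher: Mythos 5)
Your proposal is correct and follows essentially the same route as the paper, which proves the corollary by the one-line remark preceding it: combine Lawton's nine-trace parameterization with the facts that Zariski-dense $\operatorname{SL}_3(\CC)$-conjugate representations into $\operatorname{SU}(2,1)$ are $\operatorname{SU}(2,1)$-conjugate and that $\operatorname{Tr}(A^{-1})=\overline{\operatorname{Tr}(A)}$ collapses $x_5,\dots,x_8$ onto $\overline{x_1},\dots,\overline{x_4}$. Your expanded bookkeeping (checking that the words for $x_5,\dots,x_8$ are exactly the inverses of those for $x_1,\dots,x_4$, and noting that $P,Q$ then depend on the four traces \emph{and} their conjugates) is exactly the verification the paper leaves implicit.
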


\begin{Defi}
	\begin{itemize}
		\item We denote by $\chi^{sd}$, respectively $\chi^{pd}$, the subset of the character variety $\chi_{\operatorname{SU}(2,1)}(\mathbb{F}_2)$ constituted by the characters of spherical, respectively pure, decomposable representations.
		\item Let $\cC_1,\cC_2,\cC_3$ be fixed conjugacy classes in $\operatorname{SU}(2,1)(\mathbb{F}_2)$. We call \textit{relative component of} $\chi_{\operatorname{SU}(2,1)}$ \textit{with respect to} $\cC_1,\cC_2,\cC_3$ the subset constituted by the characters of representations $\rho$ that verify $\rho(a)\in\cC_1,\rho(b)\in\cC_2,\rho(ab)\in\cC_3$. We denote it by $\chi_{\cC_1,\cC_2,\cC_3}$.
	\end{itemize}
\end{Defi}

The discussions of the previous sections now enable us to parameterize the relative components of $\chi^{sd}$. We recall that $\theta$ is the rotation angle of the complex reflection $R_1$, which belongs to a domain $D_{\cC_1,\cC_2,\cC_3}$ entirely determined by the data of $\cC_1,\cC_2,\cC_3$ (see the end of Section \ref{tracesanddecomp}). The number $\alpha$ is the angular invariant of the complex triangle, which belongs for each $\theta\in D_{\cC_1,\cC_2,\cC_3}$ to the interval $I(\theta)=\{\alpha\in[0,2\pi] \ | \ \cos(\alpha)<\delta(\theta)\}$, where $\delta(\theta)$ is the bound appearing in Proposition \ref{triangleclassif}. The notion of (ir)reducibility is in the sense of Definition \ref{irred}.

\begin{Theo}
	Let $\cC_1,\cC_2,\cC_3$ be fixed elliptic conjugacy classes in $\operatorname{SU}(2,1)$. We have a homeomorphism between the relative component $\chi^{sd}_{\cC_1,\cC_2,\cC_3}$ and the following subset of $[0,2\pi]^2$:
	$$S_{\cC_1,\cC_2,\cC_3}:=\{(\theta,\alpha)\in[0,2\pi]^2 \ | \ \theta\in D_{\cC_1,\cC_2,\cC_3}, \ \alpha\in I(\theta)\}.$$ Besides, $\chi^{sd}_{\cC_1,\cC_2,\cC_3}$ contains reducible representations in its closure.
\end{Theo}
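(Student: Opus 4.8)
The plan is to construct an explicit map from the parameter set $S_{\cC_1,\cC_2,\cC_3}$ to characters and verify it is a homeomorphism onto $\chi^{sd}_{\cC_1,\cC_2,\cC_3}$. Given $(\theta,\alpha)\in S_{\cC_1,\cC_2,\cC_3}$, the data of $\theta$ and $\cC_1,\cC_2,\cC_3$ fix the rotation factors $\eta_1,\eta_2,\eta_3$ (via $\theta_1=\theta,\theta_2=\theta+\beta,\theta_3=\theta+\gamma$) and the complex angles $\phi_1,\phi_2,\phi_3$ of the mirrors, exactly as computed at the end of Section \ref{tracesanddecomp}; then $\alpha\in I(\theta)$ guarantees by Proposition \ref{triangleclassif} that a $(\phi_1,\phi_2,\phi_3)$-triangle with angular invariant $\alpha$ exists and is unique up to $\operatorname{PU}(2,1)$. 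Feeding these into the normalization of Section \ref{reflnorm} and Lemma \ref{reflnormlemma} yields explicit reflections $R_1,R_2,R_3$, hence a triple $(A,B,C)=(R_1R_2^{-1},R_2R_3^{-1},R_3R_1^{-1})$ and a character. By Theorem \ref{caractdecomp} every spherical decomposable character in the relative component arises this way, so the map is surjective.

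The key steps I would carry out in order are: first, define the map $\Phi:S_{\cC_1,\cC_2,\cC_3}\to\chi^{sd}_{\cC_1,\cC_2,\cC_3}$ via the reflection matrices above, and check it is well-defined (the resulting character lands in the prescribed conjugacy classes and is spherical decomposable). Second, establish surjectivity: given a spherical decomposable representation with $\rho(a)\in\cC_1$ etc., Theorem \ref{caractdecomp} produces reflections $R_1,R_2,R_3$ with intersecting mirrors; Remark \ref{vapneg} and the trace formulas of Lemma \ref{traceformula} recover a $\theta\in D_{\cC_1,\cC_2,\cC_3}$ and the angles $\phi_i$, while the angular invariant of the mirror triangle gives $\alpha\in I(\theta)$. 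Third, establish injectivity: two parameter pairs giving conjugate triples must agree, because $\theta$ is recovered from the negative-type eigenvalue of $A$ (Corollary \ref{tgfoot}, Remark \ref{vapneg}) and $\alpha$ is the $\operatorname{PU}(2,1)$-invariant of the triangle (Proposition \ref{triangleclassif}). Fourth, check bicontinuity, which follows from the explicit algebraic dependence of the traces $\operatorname{Tr}(A),\operatorname{Tr}(B),\operatorname{Tr}(AB),\operatorname{Tr}(A^{-1}B),\operatorname{Tr}([A,B])$ of Corollary \ref{charvarcoro} on $(\theta,\alpha)$ and conversely.

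For the final assertion, that $\chi^{sd}_{\cC_1,\cC_2,\cC_3}$ contains reducible representations in its closure, I would examine the boundary of $S_{\cC_1,\cC_2,\cC_3}$. The condition $\cos(\alpha)=\delta(\theta)$ in Proposition \ref{triangleclassif} is precisely the degenerate case where the three vertices of the complex triangle coincide, as noted in the Remark following that proposition; in the limit the three polar vectors $c_1,c_2,c_3$ cease to form a basis of $\CC^3$, the triple $(R_1,R_2,R_3)$ loses irreducibility, and the associated triple $(A,B,C)$ becomes reducible in the sense of Definition \ref{irred}. I would make this precise by taking a sequence $\alpha_n\to\alpha$ with $\cos(\alpha)=\delta(\theta)$, showing via the matrices of Lemma \ref{reflnormlemma} that the characters converge while the limiting eigenspaces collide.

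The main obstacle I anticipate is the bicontinuity and especially the behaviour at the boundary: the parameter region $S_{\cC_1,\cC_2,\cC_3}$ is cut out by strict inequalities ($\phi_i$-domains give $\theta\in D_{\cC_1,\cC_2,\cC_3}$, and $\cos(\alpha)<\delta(\theta)$ gives the interior of $I(\theta)$), so one must be careful that the map $\Phi$ extends continuously and that the reducible characters it limits to genuinely lie in the closure of the decomposable locus rather than being spurious artifacts of the normalization. Controlling the normalization as the polar vectors degenerate, and verifying that the trace coordinates of Corollary \ref{charvarcoro} remain finite and separate points up to the boundary, is where the real care is needed.
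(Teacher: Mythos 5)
Your proposal takes essentially the same route as the paper: the forward map is the explicit reflection construction of Lemma \ref{reflnormlemma} driven by the formulas at the end of Section \ref{tracesanddecomp}, the inverse recovers $(\theta,\alpha)$ from the (unique) decomposing triangle furnished by Theorem \ref{caractdecomp}, and the reducible characters in the closure are reached by degenerating $\alpha$ to the boundary $\cos(\alpha)=\delta(\theta)$ exactly as in the paper's Lemma \ref{deform}. One minor imprecision: in your injectivity step, $\theta$ is not recovered from the negative-type eigenvalue of $A$ (that only determines $\theta_2-\theta_1$); it is recovered because the mirrors $(P_AP_C),(P_AP_B),(P_BP_C)$ and hence the reflections themselves are uniquely determined by the triple, so the rotation angle of $R_1$ is a well-defined conjugation invariant.
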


\begin{proof}
	Start with a triple $(A,B,C)$ corresponding to a representation in $\chi^{sd}_{\cC_1,\cC_2,\cC_3}$. We have a continuous map sending $(A,B,C)$ to the triple of fixed points $(P_A,P_B,P_C)\in(\mathbb{H}_{\CC}^2)^3$. This triple and Theorem \ref{caractdecomp} give us a unique triangle of complex lines, together with a decomposition, and thereby a pair $(\theta,\alpha)\in S_{\cC_1,\cC_2,\cC_3}$ (see the discussion at the end of Section \ref{tracesanddecomp}).
	
	Conversely, if $(\theta,\alpha)\in S_{\cC_1,\cC_2,\cC_3}$, then the formulas at the end of Section \ref{tracesanddecomp} yield continuously a unique triple of complex angles, and therefore a triangle of complex lines realizing the decomposition of a triple in $\chi^{sd}_{\cC_1,\cC_2,\cC_3}$.
\end{proof}

The last part of the theorem is given by the more precise following result.

\begin{Lemm}\label{deform}
	Let $\rho:\mathbb{F}_2\to\operatorname{SU}(2,1)$ be an irreducible spherical decomposable representation. Then there exists a spherical decomposable deformation $(\rho_t)_{0\leq t\leq1}$ that preserves the conjugacy classes, such that $\rho_0=\rho$ and $\rho_1$ is a reducible decomposable representation (one whose mirrors have a triple intersection point).
\end{Lemm}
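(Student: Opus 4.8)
The plan is to use the explicit parameterization developed in Section \ref{tracesanddecomp}, where a spherical decomposable representation in a fixed relative component $\chi^{sd}_{\cC_1,\cC_2,\cC_3}$ is determined by the pair $(\theta,\alpha)$, with $\theta$ ranging over the domain $D_{\cC_1,\cC_2,\cC_3}$ and $\alpha$ the angular invariant of the associated complex triangle. Since the conjugacy classes $\cC_1,\cC_2,\cC_3$ are fixed, the angles $\theta_1,\theta_2,\theta_3$ of the reflections are rigidly tied together (by $\theta_i=\theta,\theta+\beta,\theta+\gamma$ with $\beta,\gamma$ fixed by $\lambda_A,\lambda_B,\lambda_C$), and the complex angles $\phi_1,\phi_2,\phi_3$ between the mirrors are determined by $\theta$ through the explicit formulas for $\sin^2(\phi_k)$. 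Thus to deform while preserving conjugacy classes it suffices to move the single angular parameter $\alpha$, keeping $\theta$ (and hence all the $\phi_k$ and $\theta_k$) constant. I would therefore set $\rho_t$ to be the representation associated to the pair $(\theta,\alpha(t))$, where $\alpha(t)$ is a path in $[0,2\pi]$ starting at $\alpha(0)=\alpha$, the angular invariant of $\rho$.

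The key point is to identify the correct target value of $\alpha$. A reducible decomposable representation — one where the three mirrors $C_1,C_2,C_3$ share a common intersection point — occurs precisely at the boundary of the admissibility interval in Proposition \ref{triangleclassif}, namely when $\cos(\alpha)=\frac{r_1^2+r_2^2+r_3^2-1}{2r_1r_2r_3}$, as noted in the Remark following that proposition (``the three vertices of the triangle coincide''). So I would let $\alpha(t)$ be a path in $I(\theta)=\{\alpha\mid\cos(\alpha)<\delta(\theta)\}$ running from $\alpha(0)=\alpha$ to an endpoint $\alpha(1)$ realizing $\cos(\alpha(1))=\delta(\theta)$. Because $I(\theta)$ is an interval (an arc), such a monotone path exists and stays inside $I(\theta)$ except at its endpoint. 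By Lemma \ref{reflnormlemma} the reflection matrices $\tilde R_1,\tilde R_2,\tilde R_3$ depend continuously (indeed analytically) on $u=e^{i\alpha/3}$ while $r_k,\eta_k$ stay fixed, so the products $A=R_1R_2^{-1}$, $B=R_2R_3^{-1}$, $C=R_3R_1^{-1}$ vary continuously, giving a continuous family $\rho_t$. Throughout the deformation the rotation factors $\eta_k$ and the complex angles $\phi_k$ are unchanged, so by Lemma \ref{traceformula} the traces of $A,B,C$ are constant, and the conjugacy classes are preserved.

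The remaining steps are to verify that $\rho_1$ is genuinely reducible in the sense of Definition \ref{irred}, and to confirm it is still decomposable. At $\alpha(1)$ the three complex lines meet in a single point, so the three mirrors have a common intersection; one then checks that this forces the fixed points (eigenspaces) of $A,B,C$ to no longer be pairwise distinct, which is exactly reducibility. Decomposability at $t=1$ is automatic since $\rho_1$ is by construction $R_1R_2^{-1},R_2R_3^{-1},R_3R_1^{-1}$ for the degenerate triangle; one may invoke the observation following Theorem \ref{caractdecomp} that the numerical condition $(\lambda_A\lambda_B\lambda_C)^3=1$ is an equality of eigenvalues independent of $\alpha$, so it persists in the limit.

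The main obstacle I anticipate is the endpoint analysis: one must show that the limiting configuration at $\cos(\alpha(1))=\delta(\theta)$ is a bona fide representation (the vectors $c_1,c_2,c_3$ remain a basis, or the degeneration is controlled) and that the triple intersection point genuinely yields a reducible triple rather than some more degenerate collapse. This requires examining the behaviour of the Hermitian matrix $H$ from Section \ref{reflnorm} as $\alpha\to\alpha(1)$: its determinant vanishes exactly when the vertices coincide, signalling that one vertex exits $\overline{\mathbb{H}_\CC^2}$ or the three become collinear, and I would need to argue that the eigenvectors of $A,B,C$ align appropriately so that two of them share an eigenspace. Handling this limit carefully — ensuring the deformation reaches an honest reducible point in the closure rather than leaving the space of representations — is the delicate part; the continuity on the open part $t<1$ is routine given Lemma \ref{reflnormlemma}.
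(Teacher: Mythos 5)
Your proposal is correct and follows essentially the same route as the paper: fix $\theta_1,\theta_2,\theta_3$ and $\phi_1,\phi_2,\phi_3$ (hence the traces, by Lemma \ref{traceformula}) and slide $\alpha$ to the boundary value $\cos(\alpha)=\delta$ where the three mirrors acquire a common point. The endpoint subtlety you flag (the polar vectors $c_1,c_2,c_3$ become linearly dependent exactly when $\det H=1-r_1^2-r_2^2-r_3^2+2r_1r_2r_3\cos\alpha$ vanishes, so the normalization of Section \ref{reflnorm} degenerates there) is real but is not addressed in the paper's proof either, which simply asserts the path reaches the triple-intersection configuration.
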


\begin{proof}
	Let us denote $$\delta=\frac{r_1^2+r_2^2+r_3^2-1}{2r_1r_2r_3}.$$
	Our representation corresponds to the data of $\theta_1$, $\theta_2$, $\theta_3$, $\alpha$, $\phi_1$, $\phi_2$, $\phi_3$, with $\cos(\alpha)<\delta.$
	Fixing the conjugacy classes of $A,B,C$ amounts to fixing $\operatorname{Tr}(A)$, $\operatorname{Tr}(B)$, $\operatorname{Tr}(C)$ as well as the values of $(\theta_i-\theta_j)/3$. Besides, the mirrors have a triple intersection point if and only if 
	$\cos(\alpha)=\delta.$
	We may therefore keep the values of $\phi_1$, $\phi_2$, $\phi_3$, $\theta_1$, $\theta_2$, $\theta_3$ fixed, and consider a path $(\alpha_t)_t$ with $\alpha_0=\alpha$ and $\cos(\alpha_1)=\delta$. The corresponding path of representations takes our initial representation to a decomposable representation whose mirrors have a triple intersection point, without changing the conjugacy classes, which concludes the proof.
\end{proof}

This result also holds for any hyperbolic decomposable configuration, replacing the angles $\phi_i$ by distances between complex lines $l_i$, and $\cos(\phi_i),\sin(\phi_i)$ by $\cosh(l_i),\sinh(l_i)$:

\begin{Lemm}\label{deform'}
	Let $\rho:\mathbb{F}_2\to\operatorname{SU}(2,1)$ be an irreducible hyperbolic decomposable representation. Then there exists a hyperbolic decomposable deformation $(\rho_t)_{0\leq t\leq1}$ that preserves the conjugacy classes, such that $\rho_0=\rho$ and $\rho_1$ is a reducible decomposable representation (one whose three mirrors have a common orthogonal complex line).
\end{Lemm}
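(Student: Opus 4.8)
The plan is to mirror the proof of Lemma \ref{deform} almost verbatim, making the three structural substitutions dictated by the statement: replace complex angles $\phi_i$ by half-distances $l_i$, replace $\cos(\phi_i),\sin(\phi_i)$ by $\cosh(l_i),\sinh(l_i)$, and accordingly set $r_i=\cosh(l_i)$ in the matrix $H$ of Section \ref{reflnorm}. First I would record that a hyperbolic decomposable representation corresponds, via Theorem \ref{caractdecomp} and the normalization of Section \ref{reflnorm}, to the data $\theta_1,\theta_2,\theta_3,\alpha,l_1,l_2,l_3$, where the $r_k=|\langle c_{k-1},c_{k+1}\rangle|$ now all satisfy $r_k>1$ (ultraparallel mirrors). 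The Hermitian form is still given by the matrix $H$, and the angular invariant $\alpha$ is again a free parameter subject to an admissibility inequality governing the relative position of the three mirrors.

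Next I would identify, as in the previous lemma, exactly which quantities are fixed by the conjugacy-class constraints and which remain free to deform. Fixing the conjugacy classes of $A,B,C$ amounts to fixing $\operatorname{Tr}(A),\operatorname{Tr}(B),\operatorname{Tr}(C)$ together with the differences $(\theta_i-\theta_j)/3$ (by Remark \ref{vapneg}, these differences are precisely the negative-type eigenvalues, hence the conjugacy classes). The hyperbolic analogues of the trace formulas of Lemma \ref{traceformula}, item ii), then express each $\operatorname{Re}(\operatorname{Tr}(\cdot))$ in terms of $\theta$ and the $\sinh^2(l_i/2)$, so that fixing the classes pins down the half-distances $l_1,l_2,l_3$ and the angles $\theta_1,\theta_2,\theta_3$, leaving only $\alpha$ free. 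The role played previously by $\delta=\frac{r_1^2+r_2^2+r_3^2-1}{2r_1r_2r_3}$ with $r_i=\cos(\phi_i)$ is now played by the same expression with $r_i=\cosh(l_i)$.

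The deformation itself is then immediate: keeping $l_1,l_2,l_3,\theta_1,\theta_2,\theta_3$ fixed, I would choose a path $(\alpha_t)_{0\le t\le 1}$ with $\alpha_0=\alpha$ and with $\alpha_1$ reaching the boundary value of the admissibility interval from Proposition \ref{triangleclassif} (now read with $r_i=\cosh(l_i)$), at which the three vertices—equivalently the three polar configurations—degenerate. This produces a continuous family of representations that preserves all conjugacy classes and ends at a reducible decomposable representation. The only genuine point requiring care, and the step I expect to be the main obstacle, is to verify that the degenerate boundary value of $\alpha$ corresponds geometrically to the three ultraparallel mirrors acquiring a \emph{common orthogonal complex line} (the hyperbolic counterpart of a triple intersection point in the spherical case), rather than to some other degeneration; this is a statement about the geometry of ultraparallel complex lines and their common perpendicular, and it is what makes the concluding clause of the lemma correct. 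Once that geometric interpretation of the limiting configuration is established, the reducibility of $\rho_1$ follows, completing the proof.
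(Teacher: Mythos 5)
Your proposal coincides with the paper's treatment: the paper gives no separate proof of this lemma, stating only that the argument of Lemma \ref{deform} carries over under the substitutions $\phi_i\rightsquigarrow l_i$ and $\cos,\sin\rightsquigarrow\cosh,\sinh$, which is exactly the deformation (fix $\theta_1,\theta_2,\theta_3$ and the $l_i$, push $\alpha$ to the boundary $\cos\alpha=\delta$) that you describe. The one point you single out as needing verification --- that the degenerate value $\cos\alpha=\delta$, where the Gram matrix of the polar vectors $c_1,c_2,c_3$ becomes singular so that they span a plane whose orthogonal complement is a positive vector fixed by all three reflections, is precisely the configuration in which the three ultraparallel mirrors acquire a common orthogonal complex line --- is indeed the crux of the final clause, and the paper leaves it exactly as implicit as you do.
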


\section{The momentum map}\label{momentummap}

\subsection{Momentum map and reducible skeleton}

The \textit{multiplicative Horn problem} is the following question: given two fixed conjugacy classes $\cC_1,\cC_2$ in $\operatorname{PU}(2,1)$, which conjugacy classes are obtained by computing the products of two matrices taken in $\cC_1,\cC_2$? In \cite{Pau}, Paupert proposes a method to describe precisely the image of the following map:

\begin{Defi}
	Let $\cG$ be the set of conjugacy classes in $\operatorname{PU}(2,1)$ and $\cC_1,\cC_2$ two conjugacy classes in $\operatorname{PU}(2,1)$. We denote by $[g]$ the conjugacy class of an element $g$. The \textit{momentum map for $\cC_1,\cC_2$} is defined as
	$$\begin{array}{ccccc}
	\mu_{\cC_1,\cC_2} & : & \cC_1\times \cC_2 & \to & \cG \\
	& & (A,B) & \mapsto & [AB]. \\
	\end{array}$$
\end{Defi}

We first review some of Paupert's results in \cite{Pau}. Using the notations of Definition \ref{anglepair}, he describes the image of the restriction $\mu_{\cC_1,\cC_2}\big|_{\mu^{-1}_{\cC_1,\cC_2}(\mbox{elliptics})}$. When there is no ambiguity regarding the choice of the two conjugacy classes, we simply call this restriction $\mu$. 

\begin{Rema}
	Notice that the fiber of $\mu$ above $\cC_3$ modulo conjugation is identified with the relative character variety $\chi_{\cC_1,\cC_2,\cC_3}$.
\end{Rema}

If $A$ is conjugate to $E(\alpha_1,\alpha_2)$ (see Remark \ref{ellip}) and $B$ is conjugate to $E(\beta_1,\beta_2)$, the angle pair of the product $AB$ (which is elliptic) may be drawn in a half-square of size $2\pi$ whose sides of length $2\pi$ are identified, forming a Möbius band. From now on, we fix two regular elliptic conjugacy classes $\cC_1,\cC_2$. We describe the image of $\mu$ when the source is restricted to \textit{reducible pairs}.

\begin{Defi}
	\cite{Pau} Let $(A,B)\in\cC_1\times \cC_2$ be a pair of regular elliptic elements. We say that this pair is
	\begin{itemize}
		\item \textit{totally reducible} when $A$ and $B$ are diagonalizable in the same basis (i.e.\ they commute). This means that they have the same fixed point and the same stable complex lines.
		\item \textit{spherical reducible} when $A$ and $B$ have one common eigenvector of negative type. This means that they have the same fixed point in $\mathbb{H}_\CC^2$.
		\item \textit{hyperbolic reducible} when $A$ and $B$ have one common eigenvector of positive type. This means that they have one stable complex line in common.
	\end{itemize}
\end{Defi}

The following theorem summarizes Paupert's results concerning the image of reducible pairs.

\begin{Theo}
	\emph{\cite{Pau}} Let $\cC_1,\cC_2$ be a pair of regular elliptic conjugacy classes, represented by the angle pairs $(\alpha_1,\alpha_2)$ and $(\beta_1,\beta_2)$.
	\begin{enumerate} [i)]
		\item The image of $\mu$ restricted to the totally reducible pairs is represented by the following two unordered angle pairs: 
		$$\{\alpha_1+\beta_1,\alpha_2+\beta_2\}, \ \{\alpha_1+\beta_2,\alpha_2+\beta_1\}.$$
		These two points are called the \textit{totally reducible vertices}.
		\item The image of $\mu$ restricted to the spherical reducible pairs is represented by the segment of slope $-1$ that connects the totally reducible vertices. We call it the \textit{spherical reducible segment}.
		\item The image of $\mu$ restricted to the hyperbolic reducible pairs is represented by segments of slope $2$ or $1/2$ issued from the totally reducible vertices. We call them the \textit{hyperbolic reducible segments}.
	\end{enumerate}
\end{Theo}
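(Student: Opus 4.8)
The statement to prove is Paupert's theorem describing the image of the momentum map $\mu$ restricted to the three types of reducible pairs. The overall strategy is to reduce to explicit computations in each of the three reducibility cases by normalizing, via conjugation in $\operatorname{PU}(2,1)$, the common eigenvector(s) shared by $A$ and $B$. Since the momentum map is defined on conjugacy classes, we are free to put the shared data in a standard position, and then the angle pair of $AB$ can be read off directly. I would organize the proof as three lemmas, one per item, each handled by a direct matrix calculation.

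\textbf{Totally reducible pairs.}
Here $A$ and $B$ commute, so by Theorem \ref{isomclassif} (and the diagonalizability of elliptics from Remark \ref{ellip}) they are simultaneously diagonalizable in a common basis adapted to the Hermitian form $J$. The subtlety is that the common basis consists of one negative-type and two positive-type eigenvectors, and there is a choice of how to pair the eigenvalues of $A$ with those of $B$ on the two positive-type axes. If $A\sim E(\alpha_1,\alpha_2)$ and $B\sim E(\beta_1,\beta_2)$, the two admissible pairings produce the products $E(\alpha_1+\beta_1,\alpha_2+\beta_2)$ and $E(\alpha_1+\beta_2,\alpha_2+\beta_1)$ (after normalizing the negative-type eigenvalue to $1$, which is exactly what the angle-pair convention in Remark \ref{ellip} does). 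These are the two totally reducible vertices; I would simply multiply the two diagonal matrices and normalize to confirm the stated angle pairs.

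\textbf{Spherical and hyperbolic reducible pairs.}
For the spherical case, $A$ and $B$ share a common negative-type eigenvector $v_-$; I would conjugate so that $v_-=(0,0,1)$, making $A$ and $B$ block-diagonal of the form $\mathbb{P}(\operatorname{U}(2)\times\operatorname{U}(1))$, i.e.\ acting as a $\operatorname{U}(2)$ rotation on the positive-definite $2$-plane $\{z_3=0\}$ together with a phase on $v_-$. The product $AB$ then restricts to a product of two elements of $\operatorname{U}(2)$ with \emph{prescribed} pairs of rotation angles but a \emph{free} relative orientation of the two invariant axes in $\CC^2$. Tracking the angle pair of the $\operatorname{U}(2)$-part as this relative angle varies, and taking into account the fixed phase on $v_-$, traces out precisely a segment of slope $-1$ in the $(\gamma_1,\gamma_2)$-plane connecting the two totally reducible vertices (the endpoints being the two commuting configurations). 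The hyperbolic case is dual: the common eigenvector is of positive type, so $A$ and $B$ share a stable complex line, and I would conjugate this line to $C_0$, putting both in $\mathbb{P}(\operatorname{U}(1,1)\times\operatorname{U}(1))$. The product now has a $\operatorname{U}(1,1)$-part with a free boost/relative-angle parameter and a fixed rotation factor on the polar axis; computing its angle pair as a function of this parameter yields the slope-$2$ and slope-$1/2$ segments issuing from the vertices (the two slopes arising from the two ways the positive-type eigenvalue can combine, analogous to the pairing ambiguity above).

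\textbf{The main obstacle.}
The genuinely delicate point is not the endpoint computations but verifying that, as the free relative-orientation parameter varies continuously over its whole range, the angle pair of the product sweeps out \emph{exactly} the claimed segment—with the correct slope and the correct endpoints—and nothing more. This requires care with the angle-pair normalization (the identification of the half-square's sides into a Möbius band, where the ordering convention $0\le\alpha_2\le\alpha_1<2\pi$ can force a relabeling as the parameter moves), and with confirming monotonicity so that the image is a full segment rather than a proper subarc. The slopes $-1$, $2$, and $1/2$ emerge from how a single rotation parameter in $\operatorname{U}(2)$ or $\operatorname{U}(1,1)$ distributes across the two coordinates of the angle pair after normalizing the distinguished eigenvalue; pinning down these linear relations precisely is where the real work lies.
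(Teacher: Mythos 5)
The paper gives no proof of this statement---it is quoted from Paupert \cite{Pau}---and the argument there is exactly the normalize-the-common-eigenvector-and-compute strategy you outline, so your proposal follows essentially the same route as the source. Your sketch is sound: the slopes $-1$, $2$, $1/2$ do come from the fixed determinant of the $\operatorname{U}(2)$ (resp.\ $\operatorname{U}(1,1)$) block together with the fixed eigenvalue on the distinguished axis, and the one piece you leave open (that the angle pair sweeps the \emph{entire} segment between, resp.\ issued from, the vertices) is correctly identified and reduces to observing that the trace of the product block is an affine function of $\cos^2\phi$ (resp.\ $\cosh^2$ of the half-distance), hence moves monotonically along a real segment in the trace plane.
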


\begin{Rema}
	\begin{itemize}
		\item The totally reducible vertices are given by the matrix products $E(\alpha_1,\alpha_2)E(\beta_1,\beta_2)$ and $E(\alpha_1,\alpha_2)E(\beta_2,\beta_1)$.
		\item The reducible segments may be disconnected in the half-square.
	\end{itemize}
\end{Rema}

\begin{Defi}
	The union of the totally reducible vertices and the reducible segments is called the \textit{reducible skeleton} of $\mu$ for $\cC_1,\cC_2$. A fiber above the spherical, respectively hyperbolic, reducible skeleton is called a \textit{spherical}, respectively \textit{hyperbolic}, \textit{reducible fiber}. See Figures \ref{moment} and \ref{moment2} for examples.
\end{Defi}

\begin{Rema}
	In his work, Paupert describes $\operatorname{Im}(\mu)$ as the union of the reducible skeleton and some connected components of its complement. We will not use those results here, and we refer the reader to \cite{Pau} for more details concerning this aspect.
\end{Rema}

\begin{minipage}{0.4\textwidth}
	\begin{figure}[H]
		\includegraphics[scale=0.28]{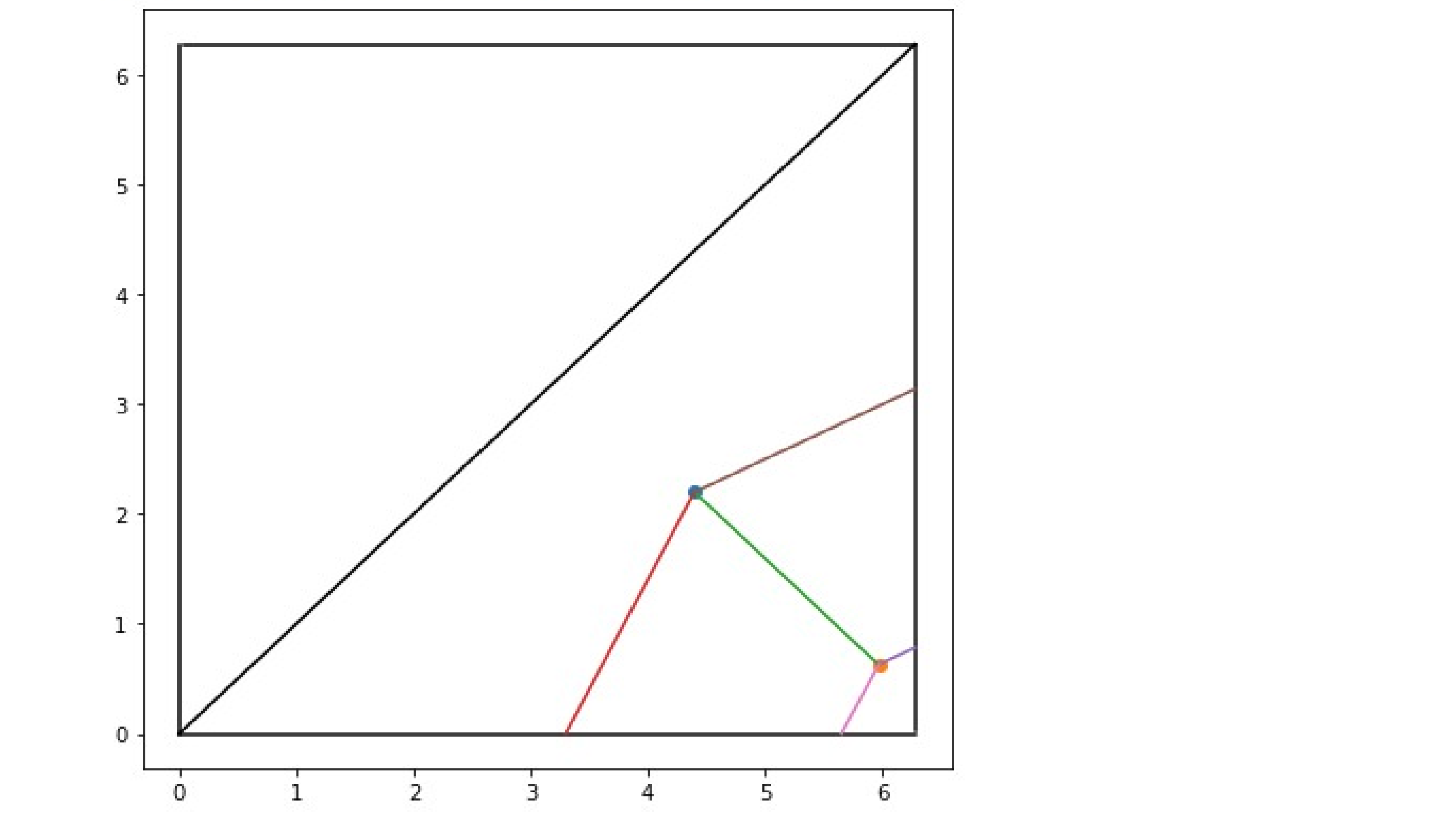}
		\caption{The reducible skeleton for $A\sim(3\pi/2,\pi)$ and $B\sim(6\pi/5,2\pi/5)$.}
		\label{moment}
	\end{figure}
\end{minipage}
\hspace{4ex} 
\begin{minipage}{0.4\textwidth}
	\begin{figure}[H]
		\includegraphics[scale=0.28]{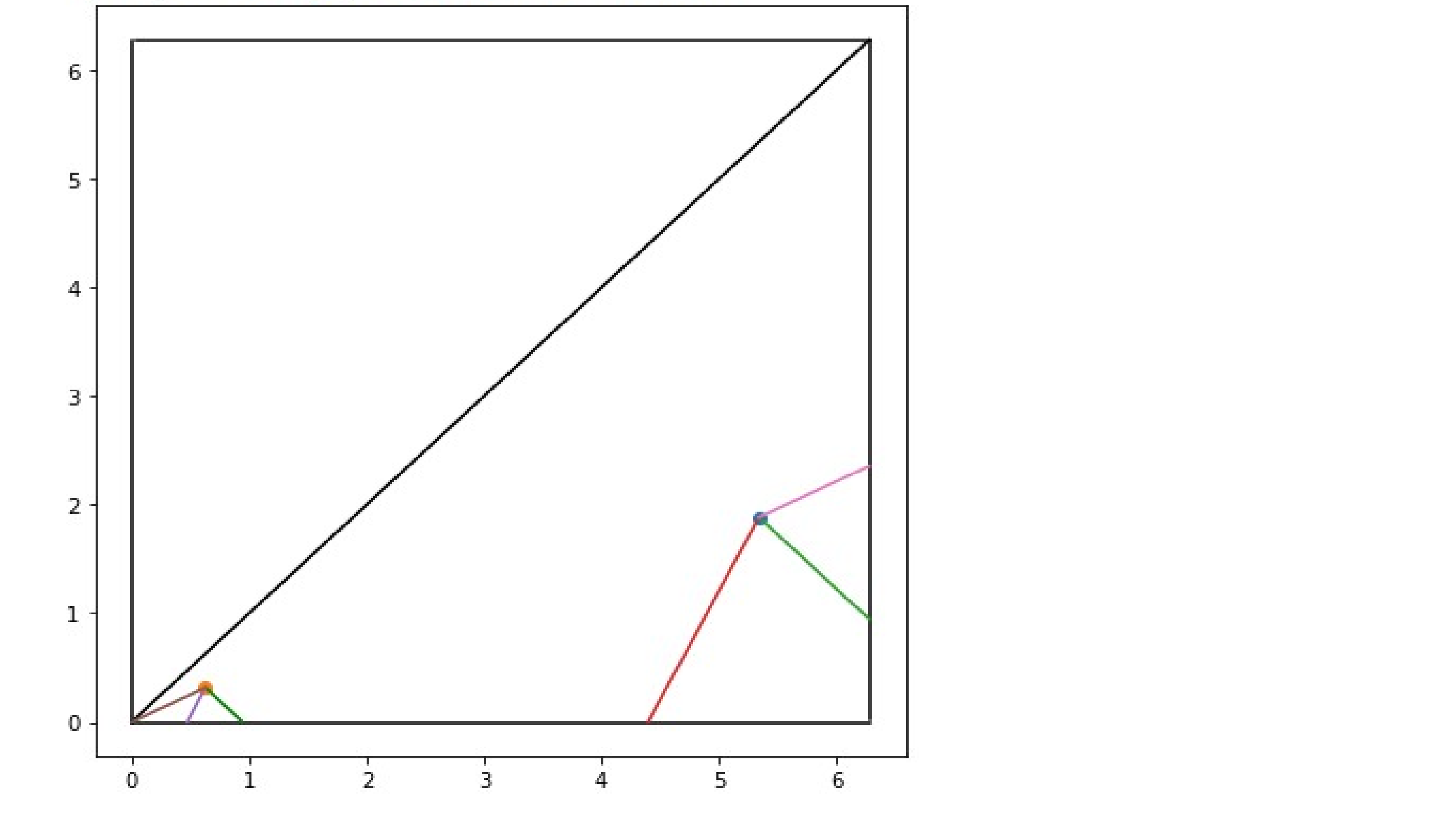}
		\caption{The reducible skeleton for $A\sim(8\pi/5,6\pi/5)$ and $B\sim(\pi,\pi/2)$.}
		\label{moment2}
	\end{figure}
\end{minipage}

\subsection{Decomposable representations and reducible fibers}

We now relate decomposable pairs and reducible fibers.

\begin{Theo}\label{fiber}
	Consider a spherical, respectively hyperbolic, reducible fiber of $\mu$ that contains an irreducible pair. Then this fiber is connected and contains only spherical, respectively hyperbolic, decomposable pairs. Besides, it contains a unique reducible pair up to conjugation, which is also decomposable.
\end{Theo}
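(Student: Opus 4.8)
The statement has three assertions about a reducible fiber $\mu^{-1}(\cC_3)$ containing an irreducible pair: that it consists entirely of (spherical or hyperbolic) decomposable pairs, that it is connected, and that it contains a unique reducible pair up to conjugation, itself decomposable. My plan is to exploit the numerical characterization of decomposability from Theorem~\ref{caractdecomp} together with the geometry of the reducible skeleton.

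First I would set up the dictionary between fibers and triples. A pair $(A,B)\in\cC_1\times\cC_2$ determines the triple $(A,B,(AB)^{-1})$ with product equal to the identity (up to the central cube root of unity in $\operatorname{SU}(2,1)$), and by the Remark in Section~\ref{momentummap} the fiber over $\cC_3$ modulo conjugation is exactly the relative character variety $\chi_{\cC_1,\cC_2,\cC_3}$. The key observation is that the condition $(\lambda_A\lambda_B\lambda_C)^3=1$ of Theorem~\ref{caractdecomp}, where $\lambda_A,\lambda_B,\lambda_C$ are the negative-type eigenvalues of $A,B,(AB)^{-1}$, depends only on the three conjugacy classes $\cC_1,\cC_2,\cC_3$ and not on the particular irreducible pair in the fiber. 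Thus I would first show that the defining equation of the reducible skeleton is precisely this numerical condition. Concretely, I would use the explicit description of the reducible skeleton: the totally reducible vertices are the angle pairs $\{\alpha_i+\beta_j\}$, and the spherical (resp.\ hyperbolic) reducible segments have slope $-1$ (resp.\ $2$ or $1/2$). Writing the angle pair of $AB$ in terms of the eigenvalue data and translating $(\lambda_A\lambda_B\lambda_C)^3=1$ into a linear relation among the angles $\alpha_i,\beta_j$ and the angles of $\cC_3$, I expect to recover exactly the equation of a reducible segment. This gives the first assertion via Theorem~\ref{caractdecomp}: \emph{every} irreducible pair in a reducible fiber satisfies the numerical condition, hence is decomposable, and the spherical/hyperbolic dichotomy matches the type of the eigenvector in common for the reducible pairs on that segment.

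For the remaining two assertions I would combine the parameterization from Section~\ref{param} with the deformation Lemmas~\ref{deform} and~\ref{deform'}. Having identified the fiber with a set of decomposable triples, I would use the homeomorphism with $S_{\cC_1,\cC_2,\cC_3}$, under which the free parameters are the reflection angle $\theta$ (constrained to $D_{\cC_1,\cC_2,\cC_3}$) and the angular invariant $\alpha$ of the associated complex triangle. Since the conjugacy classes $\cC_1,\cC_2,\cC_3$ are fixed, the angles $\phi_i$ between the mirrors are determined by $\theta$ through the formulas at the end of Section~\ref{tracesanddecomp}, so only $\alpha$ genuinely moves the triangle. Connectedness would then follow from the connectedness of the parameter domain: Lemma~\ref{deform} (resp.\ Lemma~\ref{deform'}) produces an explicit path $(\alpha_t)$ from any irreducible point to the reducible locus $\cos(\alpha)=\delta$, where the three mirrors share a triple intersection point (resp.\ a common orthogonal complex line). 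This degenerate configuration is exactly a reducible pair: the common intersection point is a common eigenvector. Uniqueness up to conjugation would come from the fact that, with $\theta$ and the $\phi_i$ fixed, the equation $\cos(\alpha)=\delta$ has a single solution, so there is exactly one reducible triangle in the fiber, and it is decomposable by construction.

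The main obstacle I anticipate is the careful bookkeeping in the first step, namely showing that the reducible-skeleton equations coincide with $(\lambda_A\lambda_B\lambda_C)^3=1$ across all three pieces (the two totally reducible vertices and the two slopes of segments), while correctly matching the spherical versus hyperbolic cases to the sign of the common eigenvector's type. One must keep track of the ambiguity of traces and eigenvalues up to the central cube root $\omega$ and ensure the cubing in the condition absorbs it cleanly (as noted in the Remark after Theorem~\ref{caractdecomp}). A secondary delicate point is confirming that the degenerate limit reached in Lemmas~\ref{deform} and~\ref{deform'} genuinely lands on the reducible pair of the \emph{same} fiber, i.e.\ that the conjugacy class of the product is preserved in the limit and that the limiting configuration is reducible rather than merely non-generic; this requires checking that the triple intersection point (or common orthogonal line) is indeed a shared eigenvector of $A$ and $B$, which follows from the decomposition $A=R_1R_2^{-1}$, $B=R_2R_3^{-1}$ once the three mirrors meet.
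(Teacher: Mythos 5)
Your first step---identifying the equation of the reducible skeleton with the numerical condition $(\lambda_A\lambda_B\lambda_C)^3=1$ of Theorem~\ref{caractdecomp} and deducing that every irreducible pair in the fiber is decomposable---is exactly the paper's Lemma~\ref{squelette--decomp}, and your use of Lemmas~\ref{deform} and~\ref{deform'} to reach a reducible configuration is also how the paper proceeds. The genuine gap is in your uniqueness argument. You assert that ``only $\alpha$ genuinely moves the triangle'' and that $\cos(\alpha)=\delta$ having a single solution gives a single reducible triangle in the fiber. This is false on two counts. First, $\theta$ is itself a free parameter ranging over $D_{\cC_1,\cC_2,\cC_3}$ (the fiber is two-dimensional, parameterized by $(\theta,\alpha)$), so the degenerate locus $\{\cos(\alpha)=\delta(\theta)\}$ is a one-parameter family of configurations, not a point. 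Second, even if the degenerate locus were a single point in parameter space, identifying distinct reducible pairs up to conjugation requires an actual conjugacy argument: the paper's Lemma~\ref{redunique} puts two reducible pairs of the fiber in block-diagonal form, reduces the question to pairs in $\operatorname{SU}(2)$ with equal traces of $A$, $B$, $AB$, and invokes trace rigidity for $\operatorname{SU}(2)$ pairs. Nothing in your proposal supplies this.

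This missing ingredient propagates to your other two assertions. For connectedness, deforming each irreducible point to \emph{some} reducible point only yields a connected fiber once all reducible points are known to be conjugate (or once $S_{\cC_1,\cC_2,\cC_3}$ itself is shown connected, which you have not done: $D_{\cC_1,\cC_2,\cC_3}$ is cut out by three inequalities and its connectedness is not established anywhere). For the claim that the fiber contains \emph{only} decomposable pairs, note that Theorem~\ref{caractdecomp} applies only to irreducible configurations, so the reducible pairs of the fiber are not covered by your first step; the paper decomposes one reducible pair via the limit of Lemmas~\ref{deform}--\ref{deform'} and then transports that decomposition to all other reducible pairs by the conjugation of Lemma~\ref{redunique}. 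You need to state and prove that conjugacy lemma to close the argument.
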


To prove this theorem, we need a couple of lemmas concerning irreducible and reducible pairs.

\begin{Lemm}\label{squelette--decomp}
	Let $\cC_1,\cC_2$ be two regular elliptic conjugacy classes and $(A,B)\in\cC_1\times\cC_2$ be an irreducible pair. Then, the angle pair of the product $AB$ lies on the spherical, respectively hyperbolic, reducible skeleton if and only if the triple $(A,B,(AB)^{-1})$ is spherical, respectively hyperbolic, decomposable.
\end{Lemm}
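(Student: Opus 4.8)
The plan is to translate each side of the equivalence into a condition on the angle pair of $AB$ and to match the two. On the decomposability side I will use the eigenvalue criterion of Theorem \ref{caractdecomp}: for an irreducible triple, decomposability realized with fixed points of a \emph{prescribed type} is equivalent to $(\lambda_A\lambda_B\lambda_C)^3=1$ for the associated preferred eigenvalues. Choosing the three fixed points of negative type gives the spherical case (type $(-,-,-)$ of Section \ref{sphhyp}), and choosing them of positive type gives the hyperbolic case $(+,+,+)$. On the skeleton side I use Paupert's description: writing $\cC_1\sim(\alpha_1,\alpha_2)$, $\cC_2\sim(\beta_1,\beta_2)$ and $AB\sim(\gamma_1,\gamma_2)$, the two totally reducible vertices are the unordered pairs $\{\alpha_1+\beta_1,\alpha_2+\beta_2\}$ and $\{\alpha_1+\beta_2,\alpha_2+\beta_1\}$, the spherical segment is the slope $-1$ segment joining them, and the hyperbolic segments have slope $2$ or $1/2$ issued from them.

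The computational core is to express the relevant eigenvalue cubes through the angle pair. Scaling $E(\alpha_1,\alpha_2)$ into $\operatorname{SU}(2,1)$, the negative-type eigenvalue cubes are $(\lambda_A^-)^3=e^{-i(\alpha_1+\alpha_2)}$ and $(\lambda_B^-)^3=e^{-i(\beta_1+\beta_2)}$, while $C=(AB)^{-1}$ gives $(\lambda_C^-)^3=e^{i(\gamma_1+\gamma_2)}$. Hence $(\lambda_A^-\lambda_B^-\lambda_C^-)^3=1$ is equivalent to $\gamma_1+\gamma_2\equiv\alpha_1+\alpha_2+\beta_1+\beta_2\pmod{2\pi}$, which is exactly the slope $-1$ line through the two totally reducible vertices. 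For positive-type fixed points the eigenvalue cubes are $e^{i(2\alpha_1-\alpha_2)}$ or $e^{i(2\alpha_2-\alpha_1)}$ for $A$ (and analogously for $B$ and $C=(AB)^{-1}$, the latter contributing $e^{i(\gamma_2-2\gamma_1)}$ or $e^{i(\gamma_1-2\gamma_2)}$), and a short check shows the admissible relations $(\lambda_A^+\lambda_B^+\lambda_C^+)^3=1$ take the form $2\gamma_1-\gamma_2=\mathrm{const}$ or $2\gamma_2-\gamma_1=\mathrm{const}$, i.e. the slope $2$ and slope $1/2$ lines through the vertices. This identifies the supporting lines of the reducible skeleton with the decomposability loci.

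It remains to upgrade ``line'' to ``segment'', one direction being immediate. If the angle pair of $AB$ lies on the spherical (resp. hyperbolic) segment, it lies on the corresponding line, so the eigenvalue relation holds; since the pair is irreducible, Theorem \ref{caractdecomp} applied to the negative- (resp. positive-) type fixed points produces a spherical (resp. hyperbolic) decomposition. For the converse I argue geometrically: a spherical decomposition gives $A=R_1R_2^{-1}$ and $B=R_2R_3^{-1}$, hence $AB=R_1R_3^{-1}$, a product of two complex reflections whose mirrors $C_1,C_3$ intersect inside $\mathbb{H}_\CC^2$. By Remark \ref{vapneg} the negative eigenvalue of $AB$ is $e^{i(\theta_3-\theta_1)/3}=\lambda_A^-\lambda_B^-$, placing $AB$ on the slope $-1$ line; moreover the sign of the $\sin^2(\phi)$ term in Lemma \ref{traceformula}(i) forces $\operatorname{Tr}(AB)$ onto the half-tangent line to $\Delta$ selected by \emph{intersecting} mirrors (the remark after Proposition \ref{tangent}), which is the half carrying the spherical segment. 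The hyperbolic case is identical with ultraparallel mirrors, the opposite sign in Lemma \ref{traceformula}(ii) selecting the complementary half-tangent and hence the slope $2$/$1/2$ segments.

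The main obstacle is precisely this segment-versus-line matching: the half-tangent chord inside $\Delta$ is strictly longer than the reducible segment, so I must show that the decomposable products never run past the totally reducible vertices. The resolution is to track the range of the configuration parameters. With $\cC_1,\cC_2$ fixed the differences $\theta_i-\theta_j$ are pinned, so $\operatorname{Tr}(AB)$ moves along a single half-tangent with foot $2\lambda_A^-\lambda_B^-+(\lambda_A^-\lambda_B^-)^{-2}$ (Corollary \ref{tgfoot}) as the complex angles vary over their admissible intervals; the limiting (degenerate) configurations are exactly the reducible-decomposable ones, where by Lemma \ref{deform} the three mirrors acquire a common point and $(A,B)$ becomes totally reducible, i.e. reaches a vertex. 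One then checks, using the explicit formulas of Section \ref{tracesanddecomp} together with the vertex coordinates above, that the endpoints coincide; connectedness of the parameter interval gives that the decomposable locus is exactly the segment. In the hyperbolic case this step also absorbs the selection problem among positive-eigenvalue choices, confirming that only the four combinations compatible with the ultraparallel (or point-mirror) configuration occur, so that one lands on the four genuine reducible segments and not on the spurious lines produced by the mismatched choices.
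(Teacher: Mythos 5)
Your main computation is the same as the paper's: translate the eigenvalue condition $(\lambda_A\lambda_B\lambda_C)^3=1$ of Theorem \ref{caractdecomp}, taken at negative-type (resp.\ positive-type) fixed points, into a linear congruence on the angle pair of $AB$, and identify the resulting loci with the slope $-1$ (resp.\ slope $2$ and $1/2$) lines through the totally reducible vertices. The paper's proof is exactly this, written with the third eigenvalue argument $\gamma_3\equiv-(\gamma_1+\gamma_2)$ instead of the coordinate sum, and it likewise concludes via Theorem \ref{caractdecomp} using irreducibility. You are also right that this only identifies the supporting \emph{lines}, and that the direction ``decomposable $\Rightarrow$ on the skeleton'' requires recovering the \emph{segment}; the paper delegates that direction to Lemmas \ref{deform} and \ref{deform'} and, in its direct proof, silently treats the skeleton as the whole line.

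However, your argument for the segment step contains an error. When the deformation of Lemma \ref{deform} brings the three mirrors to a common point, the pair $(A,B)$ becomes \emph{spherical reducible} (a common fixed point in $\mathbb{H}_\CC^2$), not totally reducible: $A$ and $B$ need not commute there, so the degenerate configuration lands somewhere on the reducible segment and in general not at a vertex. The claim that ``the endpoints coincide'' with the totally reducible vertices is therefore false as stated, and the connectedness-of-the-parameter-interval argument built on it does not go through. The fix is simpler than what you attempt: Lemma \ref{deform} (resp.\ Lemma \ref{deform'}) produces, from any irreducible spherical (resp.\ hyperbolic) decomposable triple, a spherical (resp.\ hyperbolic) reducible pair in $\cC_1\times\cC_2$ whose product lies in the same conjugacy class; hence $[AB]$ belongs to the image of $\mu$ restricted to such reducible pairs, which is by definition the corresponding reducible segment. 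This one-line deduction also disposes of the spurious positive-eigenvalue combinations in the hyperbolic case without any endpoint bookkeeping, and it is precisely how the paper handles the ``if'' direction.
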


Lemmas \ref{deform} and \ref{deform'} prove the "if" part of the statement, but we give here a direct proof of the equivalence.

\begin{proof}
	Let us write the diagonal representatives of the conjugacy classes of $A,B,AB$ in $\operatorname{SU}(2,1)$:
	$$A\sim\begin{pmatrix}
	e^{i\alpha_1} & 0 & 0 \\
	0 & e^{i\alpha_2} & 0 \\
	0 & 0 & e^{i\alpha_3}
	\end{pmatrix}, B\sim\begin{pmatrix}
	e^{i\beta_1} & 0 & 0 \\
	0 & e^{i\beta_2} & 0 \\
	0 & 0 & e^{i\beta_3}
	\end{pmatrix},AB\sim\begin{pmatrix}
	e^{i\gamma_1} & 0 & 0 \\
	0 & e^{i\gamma_2} & 0 \\
	0 & 0 & e^{i\gamma_3}
	\end{pmatrix},$$
	with $\alpha_1+\alpha_2+\alpha_3\equiv\beta_1+\beta_2+\beta_3\equiv\gamma_1+\gamma_2+\gamma_3\equiv0 \ [2\pi]$. The conjugacy classes of the three elements are represented in $\operatorname{PU}(2,1)$ by the following angle pairs:
	$$A\sim\{\alpha_1-\alpha_3,\alpha_2-\alpha_3\},B\sim\{\beta_1-\beta_3,\beta_2-\beta_3\},AB\sim\{\gamma_1-\gamma_3,\gamma_2-\gamma_3\}.$$
	The two totally reducible configurations associated to the pair $(A,B)$ are represented by the two angle pairs
	$$\{\alpha_1-\alpha_3+\beta_1-\beta_3,\alpha_2-\alpha_3+\beta_2-\beta_3\},\{\alpha_1-\alpha_3+\beta_2-\beta_3,\alpha_2-\alpha_3+\beta_1-\beta_3\}.$$
	We first examine the case where the angle pair of $AB$ lies on the spherical reducible skeleton. This means that it is on the line of slope $-1$ that connects the two totally reducible vertices. In other words, there exist two integers $k,k'$ such that
	$$\alpha_1-\alpha_3+\beta_1-\beta_3-(\gamma_1-\gamma_3)+2k\pi=-(\alpha_2-\alpha_3+\beta_2-\beta_3-(\gamma_2-\gamma_3)+2k'\pi)$$ or $$\alpha_1-\alpha_3+\beta_1-\beta_3-(\gamma_2-\gamma_3)+2k\pi=-(\alpha_2-\alpha_3+\beta_2-\beta_3-(\gamma_1-\gamma_3)+2k'\pi).$$
	Using $\alpha_1+\alpha_2+\alpha_3\equiv\beta_1+\beta_2+\beta_3\equiv\gamma_1+\gamma_2+\gamma_3\equiv0 \ [2\pi]$, both equations are equivalent to $3(\alpha_3+\beta_3-\gamma_3)\equiv0 \ [2\pi]$. This is equivalent to $$(e^{i\alpha_3}e^{i\beta_3}e^{-i\gamma_3})^3=1.$$ Since the triple $(A,B,(AB)^{-1})$ is irreducible, we may use Theorem \ref{caractdecomp}, which tells us that this equality is equivalent to the spherical decomposability of $(A,B,(AB)^{-1})$.
	
	The hyperbolic reducible case is similar.
\end{proof}

Now, each reducible fiber contains a unique conjugacy class of reducible pairs.

\begin{Lemm}\label{redunique}
	Let $\cC_1,\cC_2,\cC_3$ be three regular elliptic conjugacy classes. Let $(A_1,B_1)$ and $(A_2,B_2)$ be two hyperbolic or spherical reducible pairs in the fiber above $\cC_3$ of $\mu_{\cC_1,\cC_2}$. Then there exists $P\in\operatorname{PU}(2,1)$ such that $$(A_2,B_2)=(PA_1P^{-1},PB_1P^{-1}).$$
\end{Lemm}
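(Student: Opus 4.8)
The plan is to reduce the statement to a rigidity fact for pairs in $\operatorname{U}(2)$ (spherical case) or $\operatorname{U}(1,1)$ (hyperbolic case), where prescribing the three conjugacy classes leaves no moduli. Since the two pairs lie on the same (spherical or hyperbolic) part of the reducible skeleton they are of the same type, so I treat the spherical case in detail; the hyperbolic case is identical after replacing the positive definite form by the form $\operatorname{diag}(1,-1)$.

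First I would use transitivity. In the spherical case $A_i,B_i$ share a common fixed point $p_i\in\mathbb{H}_\CC^2$; since $\operatorname{PU}(2,1)$ acts transitively on $\mathbb{H}_\CC^2$ (Theorem \ref{isomclassif}), I conjugate each pair so that $p_1=p_2$ is the origin. After this normalization both $A_i$ and $B_i$ lie in the stabilizer $\mathbb{P}(\operatorname{U}(2)\times\operatorname{U}(1))$; projectivizing the central $\operatorname{U}(1)$ factor to $1$, each of $A_i,B_i,A_iB_i$ is represented by a canonical matrix $M_{A_i},M_{B_i},M_{A_i}M_{B_i}\in\operatorname{U}(2)$ acting on the first two coordinates, whose eigenvalues are exactly the exponentials of the angle pairs prescribed by $\cC_1,\cC_2,\cC_3$. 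The residual conjugations allowed are precisely conjugations inside $\operatorname{U}(2)$. In the hyperbolic case I instead use transitivity on complex lines to send the common stable line to $C_0$, landing in $\mathbb{P}(\operatorname{U}(1,1)\times\operatorname{U}(1))$ and working with $M_{B_i}\in\operatorname{U}(1,1)$.

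The heart of the argument is the rigidity step. Since $A$ is regular elliptic, $M_A$ has two distinct eigenvalues (Remark \ref{ellip}), so I may simultaneously conjugate to arrange $M_{A_1}=M_{A_2}=D=\operatorname{diag}(e^{i\alpha_1},e^{i\alpha_2})$ with $\alpha_1\neq\alpha_2$; the remaining freedom is the centralizer of $D$, the diagonal torus $T$. Writing $M_{B_j}=\begin{pmatrix} p_j & q_j \\ r_j & s_j \end{pmatrix}$, the constraints $\operatorname{Tr}(M_{B_j})=\operatorname{Tr}(M_B)$ and $\operatorname{Tr}(DM_{B_j})=\operatorname{Tr}(M_AM_B)$ form a linear system in $(p_j,s_j)$ whose matrix $\left(\begin{smallmatrix} 1 & 1 \\ e^{i\alpha_1} & e^{i\alpha_2}\end{smallmatrix}\right)$ has nonzero (Vandermonde) determinant; hence the diagonals coincide, $p_1=p_2$ and $s_1=s_2$. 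The fixed determinant then gives $q_1r_1=q_2r_2$, and unitarity of the columns gives $|q_1|=|q_2|$ and $|r_1|=|r_2|$. If $q_jr_j=0$ then unitarity forces $q_j=r_j=0$ (the totally reducible case) and $M_{B_1}=M_{B_2}$ outright. Otherwise both off-diagonal entries are nonzero, and I conjugate by $t=\operatorname{diag}(e^{i\phi},1)\in T$ with $\phi=\arg(q_2)-\arg(q_1)$: this matches the $(1,2)$ entries, fixes the diagonal, and the identity $\arg(q_1)+\arg(r_1)=\arg(q_2)+\arg(r_2)$ coming from $q_1r_1=q_2r_2$ guarantees the $(2,1)$ entries match as well. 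Thus $tM_{B_1}t^{-1}=M_{B_2}$ while $t$ fixes $D$, producing the desired simultaneous conjugacy.

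I expect the points requiring care to be the canonical identification of stabilizer elements with honest $\operatorname{U}(2)$ (resp. $\operatorname{U}(1,1)$) matrices after normalizing the central $\operatorname{U}(1)$ factor, and the verification that the hyperbolic case goes through verbatim: there the column normalizations become $|p|^2-|r|^2=1$ and $|q|^2-|s|^2=-1$, but they still yield $|q_1|=|q_2|$, $|r_1|=|r_2|$ and the dichotomy $q_j=0\Leftrightarrow r_j=0$, and the centralizer of the elliptic regular element $D$ in $\operatorname{U}(1,1)$ is again a diagonal torus, so the phase-matching argument is unchanged.
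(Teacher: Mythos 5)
Your proof is correct and follows essentially the same route as the paper: normalize both pairs to block-diagonal form in the stabilizer of the common fixed point (resp.\ common stable complex line), observe that the traces and determinants of the $2\times 2$ blocks of $A_j$, $B_j$, $A_jB_j$ are forced by $\cC_1,\cC_2,\cC_3$, and deduce simultaneous conjugacy. The only difference is that where the paper invokes the classical fact that $(\operatorname{Tr}(A),\operatorname{Tr}(B),\operatorname{Tr}(AB))$ determines a pair in $\operatorname{SL}_2(\CC)$ up to conjugacy together with a lemma of Acosta to upgrade to unitary conjugacy, you verify this rigidity directly via the Vandermonde system and the diagonal-torus adjustment --- a self-contained substitute that also disposes of the totally reducible case explicitly.
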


\begin{proof}
	Let us start with the spherical reducible case. We may assume that $A_1,B_1,A_2,B_2$ are in block diagonal form:
	$$A_1=\begin{pmatrix}
	aA_1' & 0 \\
	0 & 1
	\end{pmatrix}, \ B_1=\begin{pmatrix}
	bB_1' & 0 \\
	0 & 1
	\end{pmatrix},A_2=\begin{pmatrix}
	aA_2' & 0 \\
	0 & 1
	\end{pmatrix}, \ B_2=\begin{pmatrix}
	bB_2' & 0 \\
	0 & 1
	\end{pmatrix}$$
	where $A_1',B_1',A_2',B_2'\in\operatorname{SU}(2)$. Now, $A_1,A_2\in\cC_1,B_1,B_2\in\cC_2,A_1B_1,A_2B_2\in\cC_3$ so 
	$$\operatorname{Tr}(A_1)=\operatorname{Tr}(A_2),\operatorname{Tr}(B_1)=\operatorname{Tr}(B_2),\operatorname{Tr}(A_1B_1)=\operatorname{Tr}(A_2B_2)$$
	and consequently
	$$\operatorname{Tr}(A_1')=\operatorname{Tr}(A_2'),\operatorname{Tr}(B_1')=\operatorname{Tr}(B_2'),\operatorname{Tr}(A_1'B_1')=\operatorname{Tr}(A_2'B_2').$$
	It is a classical fact that this implies that the pairs $(A_1',B_1')$ and $(A_2',B_2')$ are conjugate in $\operatorname{SL}_2(\CC)$. Now, this in turn implies that they are conjugate in $\operatorname{SU}(2)$ (see for example Lemma 3.6 in \cite{Aco}). As a consequence, the pairs $(A_1,B_1)$ and $(A_2,B_2)$ are conjugate in $\operatorname{PU}(2,1)$.
	
	The proof of the hyperbolic reducible case is similar.
\end{proof}

We are now able to prove Theorem \ref{fiber}.

\begin{proof}
	Let $F$ be a reducible fiber, and let $(A,B)\in F$ be an irreducible pair. According to Lemma \ref{squelette--decomp}, the triple $(A,B,(AB)^{-1})$ is decomposable.
	
	Now, Lemmas \ref{deform} and \ref{deform'} state that the decomposition of $(A,B,(AB)^{-1})$ may be deformed into the decomposition of a reducible triple with the same conjugacy classes; denote it $(A_0,B_0,(A_0B_0)^{-1})$. So we obtain the existence of the decomposition for the reducible pair $(A_0,B_0)\in F$. Besides, Lemma \ref{redunique} states that all the reducible pairs of the fiber are conjugate; therefore, the decomposition we just obtained for $(A_0,B_0,(A_0B_0)^{-1})$ may be globally conjugate in order to decompose any reducible pair of the fiber. This shows that the whole fiber is decomposable. In the process, we constructed a deformation path going from any triple to $(A_0,B_0,(A_0B_0)^{-1})$; this implies that the fiber is path-connected.
\end{proof}

\begin{Rema}
	We are now able to fully describe the reducible fibers of the momentum map. Two situations may occur:
	\begin{itemize}
		\item the fiber above a point of the reducible skeleton contains only reducible pairs: in this case, they are all conjugate, so the fiber is reduced to a point.
		\item the fiber above a point of the reducible skeleton contains an irreducible pair: in this case, this fiber is two-dimensional and fully decomposable.
	\end{itemize}
	In Proposition 2.5 of \cite{Pau}, Paupert proves that $\mu$ is locally surjective at an irreducible pair. Consequently, a reducible fiber may contain an irreducible pair only if its image is interior to the image of $\mu$, i.e.\ if the reducible skeleton does not bound $\operatorname{Im}(\mu)$.
\end{Rema}

\section{Triangle groups}\label{trigroups}

We now describe how to obtain parameterizations of components of character varieties for $(p,q,r)$-groups.

\subsection{Decomposing $(p,q,r)$-groups}

We first recall the following definitions.

\begin{Defi}
	Let $p,q,r\in\NN$ such that $1/p+1/q+1/r<1$. 
	\begin{itemize}
		\item We define the following group:
		$$\Gamma_{p,q,r}^{(2)}=\langle a,b,c \ | \ a^p=b^q=c^r=abc=1\rangle.$$
		\item We say that an elliptic conjugacy class $\cC$ of $\operatorname{PU}(2,1)$ has \textit{order $p$} when all its elements have order $p$.
	\end{itemize}
\end{Defi}

An elliptic conjugacy class of order $p$ is given by a choice of two $p$-roots of $1$. Since these roots are in finite number, there is a finite number of elliptic conjugacy classes of order $p$.

\begin{Rema}
	Each relative component of the character variety $\chi_{\operatorname{SU}(2,1)}(\Gamma_{p,q,r}^{(2)})$ is a fiber modulo global conjugation of some $\mu_{\cC_1,\cC_2}$ for $\cC_1,\cC_2$ elliptic conjugacy classes of order $p$ and $q$. We deduce the following result from the discussion of the previous section.
\end{Rema}

\begin{Coro}
	Let $p,q,r$ be fixed. Then $\chi^{pd}(\Gamma_{p,q,r}^{(2)})$ corresponds to the union of the fibers of $\mu_{\cC_1,\cC_2}$ for every $\cC_1$ and $\cC_2$ of order $p$ and $q$ above the points of the reducible skeleton whose conjugacy class is of order $r$.
\end{Coro}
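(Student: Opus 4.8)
The plan is to assemble the final corollary from pieces already in hand, treating it essentially as a bookkeeping statement that repackages the momentum-map description into the language of relative components of the character variety. The starting point is the observation, recorded in the preceding remark, that each relative component of $\chi_{\operatorname{SU}(2,1)}(\Gamma_{p,q,r}^{(2)})$ is a fiber of some $\mu_{\cC_1,\cC_2}$ modulo global conjugation, where $\cC_1,\cC_2$ range over elliptic conjugacy classes of order $p$ and $q$. So the object $\chi^{pd}(\Gamma_{p,q,r}^{(2)})$, the locus of pure decomposable representations, decomposes as a union over all admissible pairs $(\cC_1,\cC_2)$ of the pure decomposable parts of their fibers.

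First I would fix a pair $\cC_1,\cC_2$ of order $p,q$ and analyse a single relative component, i.e.\ a fiber of $\mu_{\cC_1,\cC_2}$ above some conjugacy class $\cC_3$. A representation $\rho$ of $\Gamma_{p,q,r}^{(2)}$ sends $a,b,c$ to elements $A,B,C=(AB)^{-1}$ with $A^p=B^q=C^r=1$, so $\cC_3$ is automatically of order $r$. The content to extract is then: the fiber above $\cC_3$ consists of pure decomposable triples precisely when $\cC_3$ lies on the reducible skeleton. For this I would invoke Lemma~\ref{squelette--decomp} at the level of irreducible pairs — the angle pair of $AB$ lies on the spherical (resp.\ hyperbolic) reducible skeleton if and only if $(A,B,(AB)^{-1})$ is spherical (resp.\ hyperbolic) decomposable — and then Theorem~\ref{fiber} to promote this from a single irreducible pair to the whole fiber, since any fiber containing one irreducible decomposable pair is entirely decomposable. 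Conversely, if the fiber contains no irreducible pair it is reduced to a single reducible point by the remark following Theorem~\ref{fiber}, and a reducible decomposable point still sits above the skeleton; so in all cases the pure decomposable fibers are exactly those over skeleton points of order $r$.

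Having handled one pair $(\cC_1,\cC_2)$, I would take the union over all elliptic conjugacy classes $\cC_1$ of order $p$ and $\cC_2$ of order $q$. Since by the earlier observation every relative component of the character variety arises this way, and since pure decomposability is detected fiberwise by the skeleton condition just established, the union of the pure decomposable fibers over skeleton points of order $r$ is exactly $\chi^{pd}(\Gamma_{p,q,r}^{(2)})$. The finiteness remark — that there are only finitely many elliptic conjugacy classes of each order $p$ — guarantees the union is finite and well-defined.

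The only genuinely delicate point, rather than the main obstacle, is matching the two notions of ``order $r$'': the order of the conjugacy class $\cC_3$ as an abstract element of $\operatorname{PU}(2,1)$, versus the requirement that a skeleton point of $\mu_{\cC_1,\cC_2}$ carry a representation of the relation $abc=1$. I would check that $C=(AB)^{-1}$ has order $r$ iff its conjugacy class, viewed as a point of the momentum half-square, is an order-$r$ elliptic class, which is immediate from the angle-pair description in Remark~\ref{ellip}. Modulo this verification the corollary is a direct repackaging, and the proof is short.
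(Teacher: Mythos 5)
Your argument is correct and takes essentially the same route as the paper, which offers no separate proof and simply deduces the corollary from the remark that each relative component of $\chi_{\operatorname{SU}(2,1)}(\Gamma_{p,q,r}^{(2)})$ is a fiber of some $\mu_{\cC_1,\cC_2}$ modulo conjugation, combined with Lemma \ref{squelette--decomp} and Theorem \ref{fiber} — exactly the assembly you carry out. The one point you gloss over (whether an isolated fiber consisting of a single reducible pair over a skeleton point of order $r$ is itself pure decomposable, which is what the inclusion of such fibers in $\chi^{pd}$ requires) is equally left implicit in the paper, so your write-up is faithful to the intended argument.
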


\subsection{The $\RR$-Fuchsian component}\label{334}

\begin{Defi}
	We call \textit{$\RR$-Fuchsian component} of $\chi(\Gamma_{p,q,r}^{(2)})$ the relative component with respect to the following three angle pairs: $\{\frac{\pi}{p},-\frac{\pi}{p}\},$ $\{\frac{\pi}{q},-\frac{\pi}{q}\},$ $\{\frac{\pi}{r},-\frac{\pi}{r}\}$.
\end{Defi}

The $\RR$-Fuchsian component of $\chi(\Gamma_{p,q,r}^{(2)})$ is the one that contains the $\RR$-Fuchsian representation (i.e.\ the embedding $\Gamma_{p,q,r}^{(2)}\to\operatorname{PO}(2,1)\subset\operatorname{PU}(2,1)$), as well as representations decomposable in products of reflections of order $2$ (representations of $\Gamma_{p,q,r}$ as described in the introduction). In this component, the negative type eigenvalues of $A,B,C$ are all equal to 1; this imposes that the three angles of the complex reflections are equal, i.e.\ $\theta_1=\theta_2=\theta_3=\theta$. The results of this paper also have the following consequence:

\begin{Theo}\label{RFuchs}
	The $\RR$-Fuchsian component of $\chi(\Gamma_{p,q,r}^{(2)})$ is entirely composed of spherical decomposable representations. Besides, it contains a unique reducible point and is topologically a sphere. In particular, it is compact.
\end{Theo}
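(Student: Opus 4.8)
The plan is to assemble the statement from the machinery developed in the previous sections. First I would establish that every representation in the $\RR$-Fuchsian component is spherical decomposable. By definition this component is the relative component with respect to the angle pairs $\{\pi/p,-\pi/p\}$, $\{\pi/q,-\pi/q\}$, $\{\pi/r,-\pi/r\}$, so the negative-type eigenvalues satisfy $\lambda_A=\lambda_B=\lambda_C=1$, whence $(\lambda_A\lambda_B\lambda_C)^3=1$. For the irreducible representations, Theorem \ref{caractdecomp} then gives decomposability, and since the fixed points $P_A,P_B,P_C$ all lie inside $\mathbb{H}_\CC^2$ (the elements preserve a common copy of $\mathbb{H}_\RR^2$ in the $\RR$-Fuchsian case, and persist as type $(-,-,-)$ under deformation), the decomposition is spherical in the sense of Section \ref{sphhyp}. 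Equivalently, one realizes this component as the fiber of $\mu_{\cC_1,\cC_2}$ above the order-$r$ conjugacy class $\cC_3$; by Theorem \ref{fiber} and Lemma \ref{squelette--decomp}, this class lies on the spherical reducible skeleton precisely because $(\lambda_A\lambda_B\lambda_C)^3=1$, and hence the entire fiber is spherical decomposable.

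Next I would pin down the topology using the parameterization of Section \ref{tracesanddecomp} together with Theorem \ref{charvar} (and its $\operatorname{SU}(2,1)$-corollary). Since $\theta_1=\theta_2=\theta_3=\theta$, the two free parameters degenerate: the formulas for $\sin^2(\phi_1),\sin^2(\phi_2),\sin^2(\phi_3)$ simplify (here $\beta=\gamma=0$), and the representation is governed by the single angle parameter $\theta$ together with the angular invariant $\alpha$. The idea is that the parameter domain collapses to a disk-like or spherical shape: for each admissible $\theta$ the invariant $\alpha$ ranges over the interval $I(\theta)=\{\alpha : \cos(\alpha)<\delta(\theta)\}$ from Proposition \ref{triangleclassif}, and the boundary behaviour of $\delta(\theta)$ glues the endpoints so that the resulting parameter space closes up. The unique reducible point comes from Lemma \ref{redunique}: the fiber contains exactly one conjugacy class of reducible pairs, realized when the three mirrors meet at a common point, i.e.\ when $\cos(\alpha)=\delta(\theta)$ and the triangle degenerates. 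I would argue that this reducible point is the single point compactifying the family, yielding a space homeomorphic to $S^2$.

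The main obstacle I anticipate is the topological identification: making precise \emph{how} the two-parameter family $(\theta,\alpha)$, with $\alpha$ confined to a $\theta$-dependent interval, assembles into a sphere rather than some other surface. Concretely I would need to analyze the degeneration loci carefully—where $\cos(\alpha)=\delta(\theta)$ (triple intersection, the reducible point) and where the domain $D_{\cC_1,\cC_2,\cC_3}$ for $\theta$ terminates (the angles $\phi_i$ degenerating to $0$ or $\pi/2$)—and verify that these boundary pieces collapse to points or identify in the correct way. The cleanest route is probably to use the $\RR$-Fuchsian representation itself as a base point with a genuine two-disk of deformations, show via the trace coordinates of Corollary \ref{charvarcoro} that the relative component is a closed $2$-manifold (the character variety being Hausdorff and the conjugacy classes fixed), and then invoke compactness of the space of such reflection triples (parameterized by the bounded invariant $\alpha$ and the order-$r$ constraint) to conclude it is a closed surface containing a single reducible point; matching this against the explicit degeneration at $\cos(\alpha)=\delta(\theta)$ identifies it as $S^2$.
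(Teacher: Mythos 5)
Your proposal is correct and follows essentially the same route as the paper: the negative-type eigenvalues all equal $1$, forcing a spherical decomposition with $\theta_1=\theta_2=\theta_3=\theta$, after which the trace formulas of Section \ref{tracesanddecomp} give the angles $\phi_i$ and the domain for $(\theta,\alpha)$. Your discussion of the degeneration loci (the reducible locus $\cos(\alpha)=\delta(\theta)$ collapsing to a single point and the $\alpha$-interval shrinking at the ends of the $\theta$-domain, yielding a disk with boundary collapsed, hence a sphere) is in fact more explicit about the topological claim than the paper's own proof, which leaves that step implicit.
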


\begin{proof}
	In the real Fuchsian component, $A,B,C$ are in the following conjugacy classes:
	$$A\sim\begin{pmatrix}
	e^{2i\pi/p} & 0 & 0 \\
	0 & e^{-2i\pi/p} & 0 \\
	0 & 0 & 1 \\
	\end{pmatrix}, \ B\sim\begin{pmatrix}
	e^{2i\pi/q} & 0 & 0 \\
	0 & e^{-2i\pi/q} & 0 \\
	0 & 0 & 1 \\
	\end{pmatrix},$$
	$$C\sim\begin{pmatrix}
	e^{2i\pi/r} & 0 & 0 \\
	0 & e^{-2i\pi/r} & 0 \\
	0 & 0 & 1 \\
	\end{pmatrix}.$$
	The fact that the three eigenvalues of negative type are equal to $1$ implies that a spherical decomposition exists, with $e^{i\frac{\theta_2-\theta_1}{3}}=e^{i\frac{\theta_3-\theta_2}{3}}=e^{i\frac{\theta_1-\theta_3}{3}}=1$. This imposes $\theta_1=\theta_2=\theta_3=\theta$.
	
	Since $\operatorname{Tr}(A)=1+2\cos(2\pi/p),\operatorname{Tr}(B)=1+2\cos(2\pi/q),\operatorname{Tr}(C)=1+2\cos(2\pi/r)$, the formulas of Section \ref{tracesanddecomp} yield
	$$\sin^2(\phi_1)=\frac{1-\cos(2\pi/q)}{2\sin^2(\theta/2)}, \ \sin^2(\phi_2)=\frac{1-\cos(2\pi/r)}{2\sin^2(\theta/2)},$$
	$$\sin^2(\phi_3)=\frac{1-\cos(2\pi/p)}{2\sin^2(\theta/2)}.$$
	Therefore if $m=\operatorname{min}(p,q,r)$ then $\theta\in]2\pi/m,2\pi-2\pi/m[$, and the angular invariant $\alpha$ varies in the interval of Proposition \ref{triangleclassif}.
\end{proof}

\begin{Rema}
	Any representation in this component can be obtained as a deformation of a representation decomposable in products of reflections of order $2$. Indeed, such a representation corresponds to $\theta=\pi$, and $\alpha$ fixed. So any representation corresponding to the same $\alpha$ is obtained by deforming $\theta$.
\end{Rema}

Applying the method of Section \ref{tracesanddecomp}, we can describe all $\RR$-Fuchsian components for $(p,q,r)$-groups. As an example, let us consider the $\RR$-Fuchsian component for the $(3,3,4)$-group. 

\begin{Prop}
	Consider the three angle pairs: $(4\pi/3,2\pi/3),$ $(4\pi/3,2\pi/3),$ $(3\pi/2,\pi/2)$. Then the relative component of $\chi(\Gamma_{3,3,4}^{(2)})$ with respect to these three conjugacy classes contains irreducible representations which are spherical decomposable. It is parameterized by $\alpha,\theta$ with $\theta\in]2\pi/3,4\pi/3[$ and $\alpha$ in the domain defined in Proposition \ref{triangleclassif}.
\end{Prop}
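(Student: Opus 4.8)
The plan is to verify that the specific $(3,3,4)$-data satisfies the decomposability condition, then invoke the general parameterization machinery developed in Sections \ref{tracesanddecomp} and \ref{reflgrp}. First I would identify the negative-type eigenvalues of the three conjugacy classes. The given angle pairs $(4\pi/3,2\pi/3)$ correspond to elements of order $3$, and $(3\pi/2,\pi/2)$ to an element of order $4$; these are precisely the $\RR$-Fuchsian classes, so the diagonal $\operatorname{SU}(2,1)$-representatives have negative-type eigenvalues all equal to $1$ (as computed in the proof of Theorem \ref{RFuchs}, where $A,B,C\sim\operatorname{diag}(e^{2i\pi/p},e^{-2i\pi/p},1)$). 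Hence $\lambda_A=\lambda_B=\lambda_C=1$ and trivially $(\lambda_A\lambda_B\lambda_C)^3=1$, so Theorem \ref{caractdecomp} guarantees that any irreducible triple in this relative component is spherical decomposable.

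Next I would pin down the range of $\theta$. By Remark \ref{vapneg}, the equalities $e^{i(\theta_2-\theta_1)/3}=e^{i(\theta_3-\theta_2)/3}=e^{i(\theta_1-\theta_3)/3}=1$ force $\theta_1=\theta_2=\theta_3=\theta$, so $\beta=\gamma=0$ in the notation of Section \ref{tracesanddecomp}. Substituting into the three formulas for $\sin^2(\phi_i)$ and using $\operatorname{Tr}(A)=\operatorname{Tr}(B)=1+2\cos(2\pi/3)$ and $\operatorname{Tr}(C)=1+2\cos(\pi/2)$ (equivalently, the traces of the $(3,3,4)$-classes), the conditions $\sin^2(\phi_i)\in\,]0,1]$ collapse, exactly as in the proof of Theorem \ref{RFuchs} with $m=\min(3,3,4)=3$, to the constraint $\theta\in\,]2\pi/3,4\pi/3[$. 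This is the domain $D_{\cC_1,\cC_2,\cC_3}$ specialized to the present data.

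Finally, for each admissible $\theta$ the angular invariant $\alpha$ is free to range over the interval $I(\theta)$ of Proposition \ref{triangleclassif}, i.e.\ $\cos(\alpha)<\delta(\theta)$ with $\delta=\frac{r_1^2+r_2^2+r_3^2-1}{2r_1r_2r_3}$, where $r_i=\cos(\phi_i)$ are determined by the formulas above. The existence of irreducible representations then follows because, for generic $\alpha$ in this interval, the three mirrors do not share a common intersection point, so the triple $(A,B,C)$ is irreducible in the sense of Definition \ref{irred}. I expect the only genuinely delicate point to be checking that $I(\theta)$ is nonempty for every $\theta\in\,]2\pi/3,4\pi/3[$ — that is, confirming $\delta(\theta)>-1$ throughout the interval so that the parameter domain $S_{\cC_1,\cC_2,\cC_3}$ is a genuine two-dimensional region rather than degenerating at the endpoints; this is a direct (if slightly tedious) verification using the explicit $\sin^2(\phi_i)$ formulas, and once it is in hand the proposition follows immediately from the parameterization theorem of Section \ref{charvarsection}.
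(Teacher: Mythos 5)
Your proposal is correct and follows essentially the same route as the paper: it specializes Theorem \ref{RFuchs} (negative-type eigenvalues all equal to $1$, hence spherical decomposability with $\theta_1=\theta_2=\theta_3=\theta$), plugs $(\operatorname{Tr}(A),\operatorname{Tr}(B),\operatorname{Tr}(C))=(0,0,1)$ into the $\sin^2(\phi_i)$ formulas of Section \ref{tracesanddecomp} to get $\theta\in\,]2\pi/3,4\pi/3[$, and lets $\alpha$ range over the interval of Proposition \ref{triangleclassif}. The extra remarks on irreducibility for generic $\alpha$ and on the nonemptiness of $I(\theta)$ are sensible additions but do not change the argument.
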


\begin{proof}
	Such a group may be represented by a deformation of the real Fuchsian representation, with $A,B,C$ in the following conjugacy classes:
	$$A,B\sim\begin{pmatrix}
	e^{2i\pi/3} & 0 & 0 \\
	0 & e^{-2i\pi/3} & 0 \\
	0 & 0 & 1 \\
	\end{pmatrix}, \ C\sim\begin{pmatrix}
	i & 0 & 0 \\
	0 & -i & 0 \\
	0 & 0 & 1 \\
	\end{pmatrix}.$$
	Using Theorem \ref{RFuchs}, we know that a spherical decomposition exists, with $\theta_1=\theta_2=\theta_3=\theta$.
	
	Since $(\operatorname{Tr}(A),\operatorname{Tr}(B),\operatorname{Tr}(C))=(0,0,1)$ we obtain
	$$\sin^2(\phi_3)=\sin^2(\phi_1)=\frac{3}{4\sin^2(\theta/2)}, \ \sin^2(\phi_2)=\frac{1}{2\sin^2(\theta/2)}.$$
	Therefore $\theta\in]2\pi/3,4\pi/3[$, and the angular invariant $\alpha$ varies in the interval of Proposition \ref{triangleclassif}.
\end{proof}

 The region depicted in Figures \ref{alphafixe} and \ref{thetafixe} is the set of possible values of $\operatorname{Tr}(A^{-1}B)$. We obtain the curves in Figure \ref{alphafixe} by fixing $\alpha$, and those in Figure \ref{thetafixe} by fixing $\theta$. These drawings illustrate the parameterization of this component of $\chi^{sd}(\Gamma_{3,3,4}^{(2)})$ by the couple $(\alpha,\theta)$. Recall that each value of $\operatorname{Tr}(A^{-1}B)$ corresponds to two distinct representations (see Theorem \ref{charvar} and Corollary \ref{charvarcoro}). The unique reducible point of this component is the one that appears as a singular point on the curve. The horizontal axis corresponds to $\theta=\pi$, i.e.\ to decompositions in products of involutions.

\begin{minipage}{0.4\textwidth}
	\begin{figure}[H]
		\includegraphics[scale=0.2]{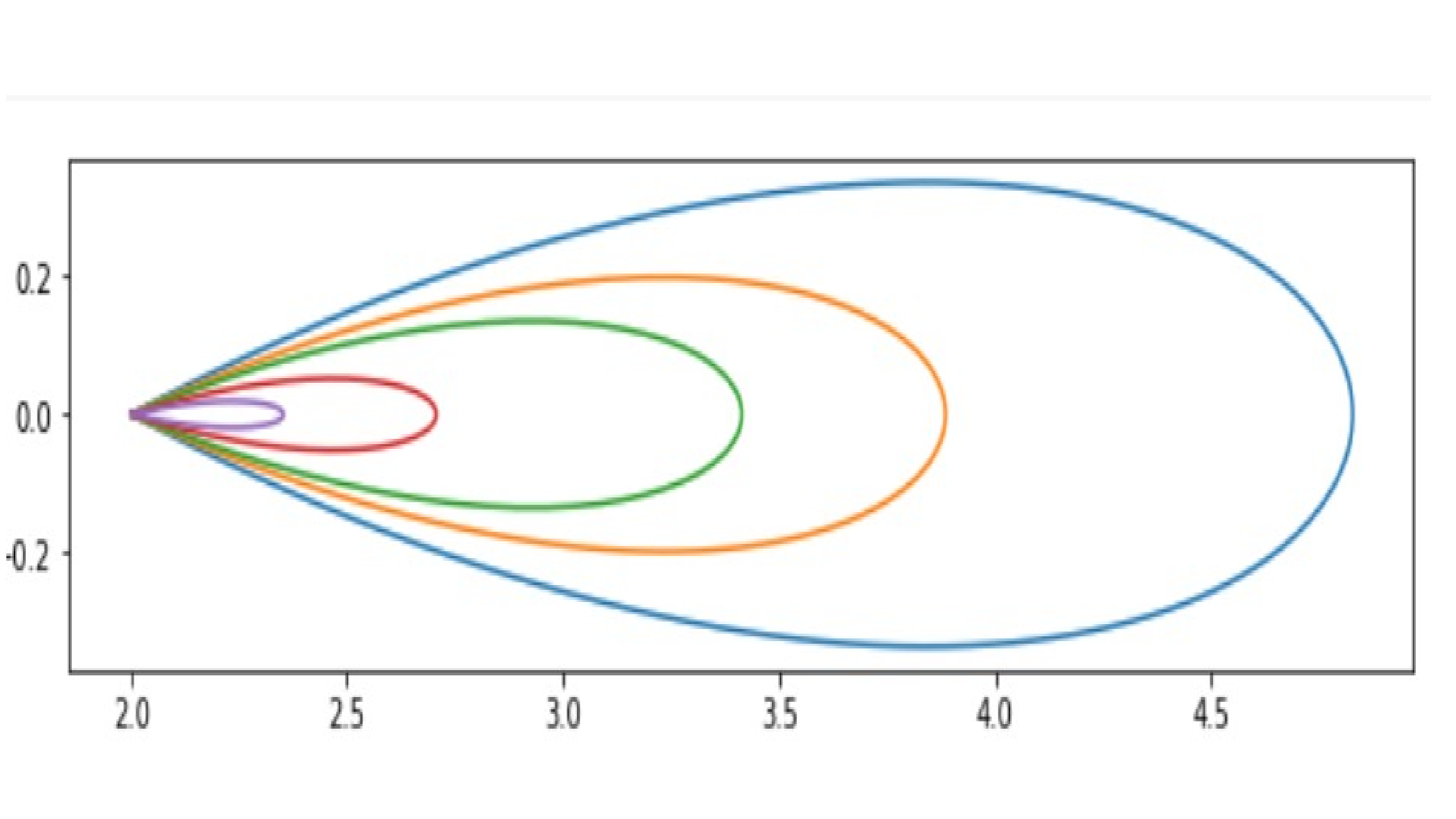}
		\caption{Values of $\operatorname{Tr}(A^{-1}B)$ along the level curves of $\alpha$.}
		\label{alphafixe}
	\end{figure}
\end{minipage}
\hspace{4ex} 
\begin{minipage}{0.4\textwidth}
	\begin{figure}[H]
		\includegraphics[scale=0.2]{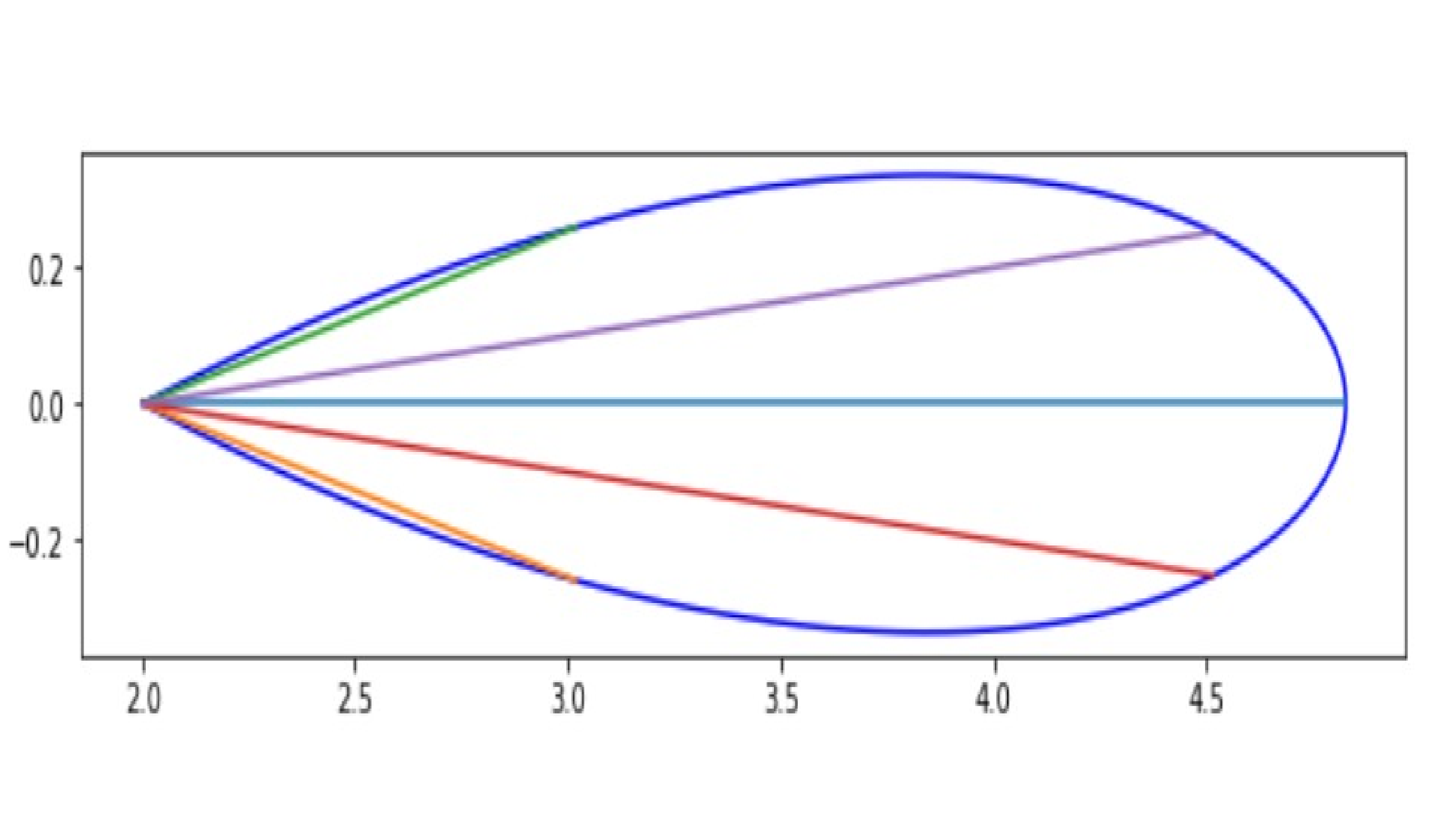}
		\caption{Values of $\operatorname{Tr}(A^{-1}B)$ along the level curves of $\theta$.}
		\label{thetafixe}
	\end{figure}
\end{minipage}

\subsection{An exotic component}

We conclude this paper by giving the parameterization of a non-$\RR$-Fuchsian component of the character variety of the $(4,5,20)$-group, that is, a component containing no $\RR$-Fuchsian representation.

\begin{Prop}
	Consider the three angle pairs $(3\pi/2,\pi),$ $(6\pi/5,2\pi/5),$ $(8\pi/5,3\pi/10)$. Then the relative component of $\chi(\Gamma_{4,5,20}^{(2)})$ with respect to these three conjugacy classes is composed of a unique spherical reducible representation and a family of irreducible spherical decomposable representations parameterized by $\alpha,\theta$ with $\theta\in]\pi,3\pi/2[$ and $\alpha$ in the domain defined in Proposition \ref{triangleclassif}.
\end{Prop}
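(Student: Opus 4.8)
The plan is to follow the template of the $\RR$-Fuchsian and $(3,3,4)$ computations, i.e. the trace method of Section~\ref{tracesanddecomp}, while invoking the structural results of Section~\ref{momentummap} to get the global picture cheaply. First I would fix diagonal $\operatorname{SU}(2,1)$-representatives of the three classes coming from the given angle pairs and read off the negative-type eigenvalues. By the normalization of Remark~\ref{ellip}, the negative-type eigenvalue attached to an angle pair $(\alpha_1,\alpha_2)$ is $e^{-i(\alpha_1+\alpha_2)/3}$ (up to a cube root of unity). Since $(3\pi/2+\pi)+(6\pi/5+2\pi/5)+(8\pi/5+3\pi/10)=6\pi$, dividing by $3$ gives $\lambda_A\lambda_B\lambda_C=e^{-2i\pi}=1$, so in particular $(\lambda_A\lambda_B\lambda_C)^3=1$. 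By Theorem~\ref{caractdecomp} every irreducible triple in this relative component is decomposable, and because the relevant fixed points $P_A,P_B,P_C$ are the negative-type ones the decomposition is spherical. Equivalently, by Lemma~\ref{squelette--decomp} the class of $AB$ lies on the spherical reducible skeleton of $\mu$, so Theorem~\ref{fiber} already yields that the fiber is connected, entirely spherical decomposable, and carries a unique spherical reducible pair up to conjugacy.

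It then remains to make the parameterization explicit. Using Remark~\ref{vapneg}, the three negative-type eigenvalues fix the differences $\theta_2-\theta_1$ and $\theta_3-\theta_2$ of the rotation angles of $R_1,R_2,R_3$, leaving $\theta:=\theta_1$ as a single free parameter, so that $(\theta_1,\theta_2,\theta_3)=(\theta,\theta+\beta,\theta+\gamma)$ for explicit constants $\beta,\gamma$. I would then compute $\operatorname{Tr}(A),\operatorname{Tr}(B),\operatorname{Tr}(C)$ from the eigenvalues and substitute into the three formulas for $\sin^2(\phi_1),\sin^2(\phi_2),\sin^2(\phi_3)$ at the end of Section~\ref{tracesanddecomp}. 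Imposing $\sin^2(\phi_k)\in\,]0,1]$ for $k=1,2,3$ cuts out the admissible range of $\theta$, which I expect to reduce to exactly $]\pi,3\pi/2[$; for each such $\theta$ the angular invariant $\alpha$ then ranges over the interval of Proposition~\ref{triangleclassif}, giving the claimed $(\alpha,\theta)$-parameterization and, via the homeomorphism of Section~\ref{charvarsection}, the identification of the irreducible locus of the component.

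For the unique reducible representation I would appeal to Lemma~\ref{redunique} (a reducible fiber contains a single conjugacy class of reducible pairs) together with Lemma~\ref{deform}, which exhibits it as the limit $\cos(\alpha)=\delta$ of the decomposable family, where the three mirrors acquire a common intersection point; this point is spherical reducible because the common fixed point lies in $\mathbb{H}_\CC^2$. Finally, to justify that the component is exotic, I would invoke Remark~\ref{ellip}: an isometry preserves a copy of $\mathbb{H}_\RR^2$ only if its angle pair has the form $(\alpha,2\pi-\alpha)$. Since $3\pi/2+\pi=5\pi/2\neq2\pi$, the class of $A$ contains no such isometry, hence no representation in this component is $\RR$-Fuchsian.

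The main obstacle is the explicit computation pinning down the $\theta$-range: one must verify that the three constraints $\sin^2(\phi_k)\in\,]0,1]$ have common solution set exactly $]\pi,3\pi/2[$ (the overlap being guaranteed a priori by the decomposability already established), and that the branch selected in Remark~\ref{vapneg} is the one realizing a genuinely spherical triangle (all three mirrors pairwise intersecting, fixed points of negative type) rather than a hyperbolic or mixed configuration. This last point is governed by the signs in the two trace formulas of Lemma~\ref{traceformula}. Checking that the $\alpha$-interval of Proposition~\ref{triangleclassif} is non-empty for each admissible $\theta$ is a routine but necessary final step.
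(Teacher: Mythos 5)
Your proposal is correct and follows essentially the same route as the paper: verify the decomposability condition $(\lambda_A\lambda_B\lambda_C)^3=1$ on the negative-type eigenvalues (equivalently, that the class of $AB$ lies on the spherical reducible segment), run the trace method of Section \ref{tracesanddecomp} to pin down $(\theta_1,\theta_2,\theta_3)=(\theta,\theta-\pi/2,\theta-21\pi/10)$ and the admissible range $\theta\in\,]\pi,3\pi/2[$, and invoke Theorem \ref{fiber} (hence Lemmas \ref{deform} and \ref{redunique}) for the connectedness and the uniqueness of the reducible point. The only step you defer --- the numerical check that the three constraints $\sin^2(\phi_k)\in\,]0,1]$ cut out exactly $]\pi,3\pi/2[$ and that the resulting products have orders $4,5,20$ --- is treated at the same level of detail in the paper itself.
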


\begin{proof}
	We represent $A,B,C$ in $\operatorname{SU}(2,1)$ by the following conjugacy classes:
	$$A\sim\begin{pmatrix}
	e^{4i\pi/3} & 0 & 0 \\
	0 & e^{5i\pi/6} & 0 \\
	0 & 0 & e^{-i\pi/6} \\
	\end{pmatrix}, \ B\sim\begin{pmatrix}
	e^{2i\pi/3} & 0 & 0 \\
	0 & e^{-2i\pi/15} & 0 \\
	0 & 0 & e^{-8i\pi/15} \\
	\end{pmatrix},$$
	$$C\sim\begin{pmatrix}
	-1 & 0 & 0 \\
	0 & e^{3i\pi/10} & 0 \\
	0 & 0 & e^{7i\pi/10} \\
	\end{pmatrix}.$$
	The fact that the angle pair of $C^{-1}$ lies on the spherical reducible segment of the image of $\mu$ for the pair $(A,B)$ (see Figure \ref{moment}) guarantees that these conjugacy classes give way to a spherical reducible representation. Thanks to the method of Section \ref{tracesanddecomp}, we now construct explicitely a two-dimensional family of irreducible spherical decomposable representations with respect to these three conjugacy classes.
	
	We first obtain $(\theta_1,\theta_2,\theta_3)=(\theta,\theta-\pi/2,\theta-21\pi/10)$. Then, the formulas giving the expressions of the complex angles in terms of $\theta$ impose $\theta\in]\pi,3\pi/2[.$ Finally, we let $\alpha$ vary in the domain of Proposition \ref{triangleclassif}, and we verify that the reflection group associated to these parameters (as constructed in Lemma \ref{reflnormlemma}) gives way to a $(4,5,20)$-group. Since the products $R_1R_2^{-1},R_2R_3^{-1},R_3R_1^{-1}$ have respective orders $4,5,20$, we have proven that this relative component of $\chi(\Gamma_{4,5,20}^{(2)})$ contains irreducible spherical decomposable representations. To conclude, using Theorem \ref{fiber}, this relative component of $\chi(\Gamma_{4,5,20}^{(2)})$ is composed of a unique spherical reducible representation, and a two dimensional family of irreducible spherical decomposable representations.
\end{proof}

 Just like in Section \ref{334}, we illustrate our parameterization of this component by drawing the values of $\operatorname{Tr}(A^{-1}B)$, for fixed values of $\alpha$ (Figure \ref{alphafixe2}) and fixed values of $\theta$ (Figure \ref{thetafixe2}).

\begin{Rema}
	Relative components containing hyperbolic decomposable representations are more complicated, because the decompositions may include complex reflections in points.
\end{Rema}

\begin{minipage}{0.4\textwidth}
	\begin{figure}[H]
		\includegraphics[scale=0.3]{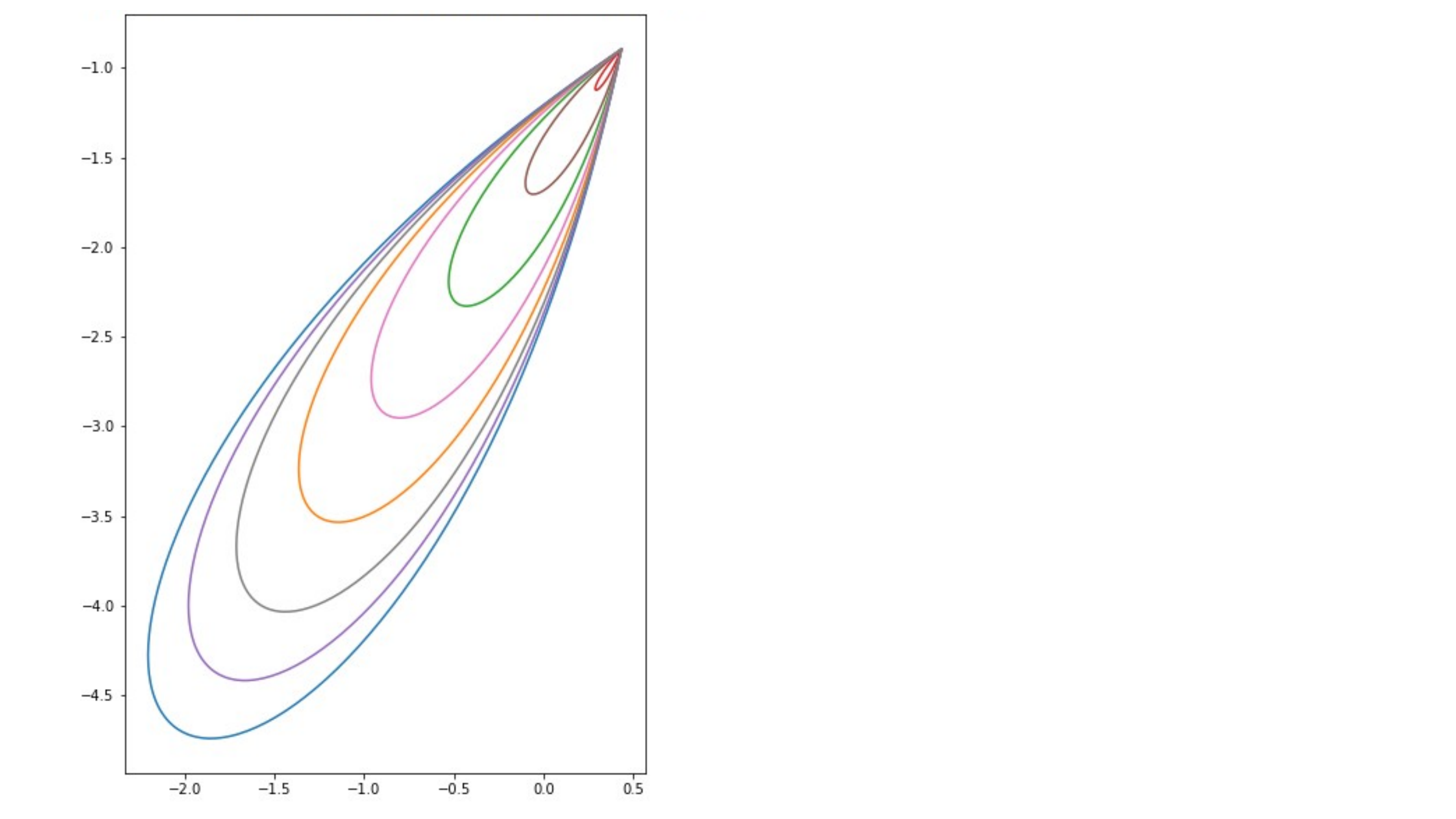}
		\caption{Values of $\operatorname{Tr}(A^{-1}B)$ along the level curves of $\alpha$.}
		\label{alphafixe2}
	\end{figure}
\end{minipage}
\hspace{4ex} 
\begin{minipage}{0.4\textwidth}
	\begin{figure}[H]
		\includegraphics[scale=0.3]{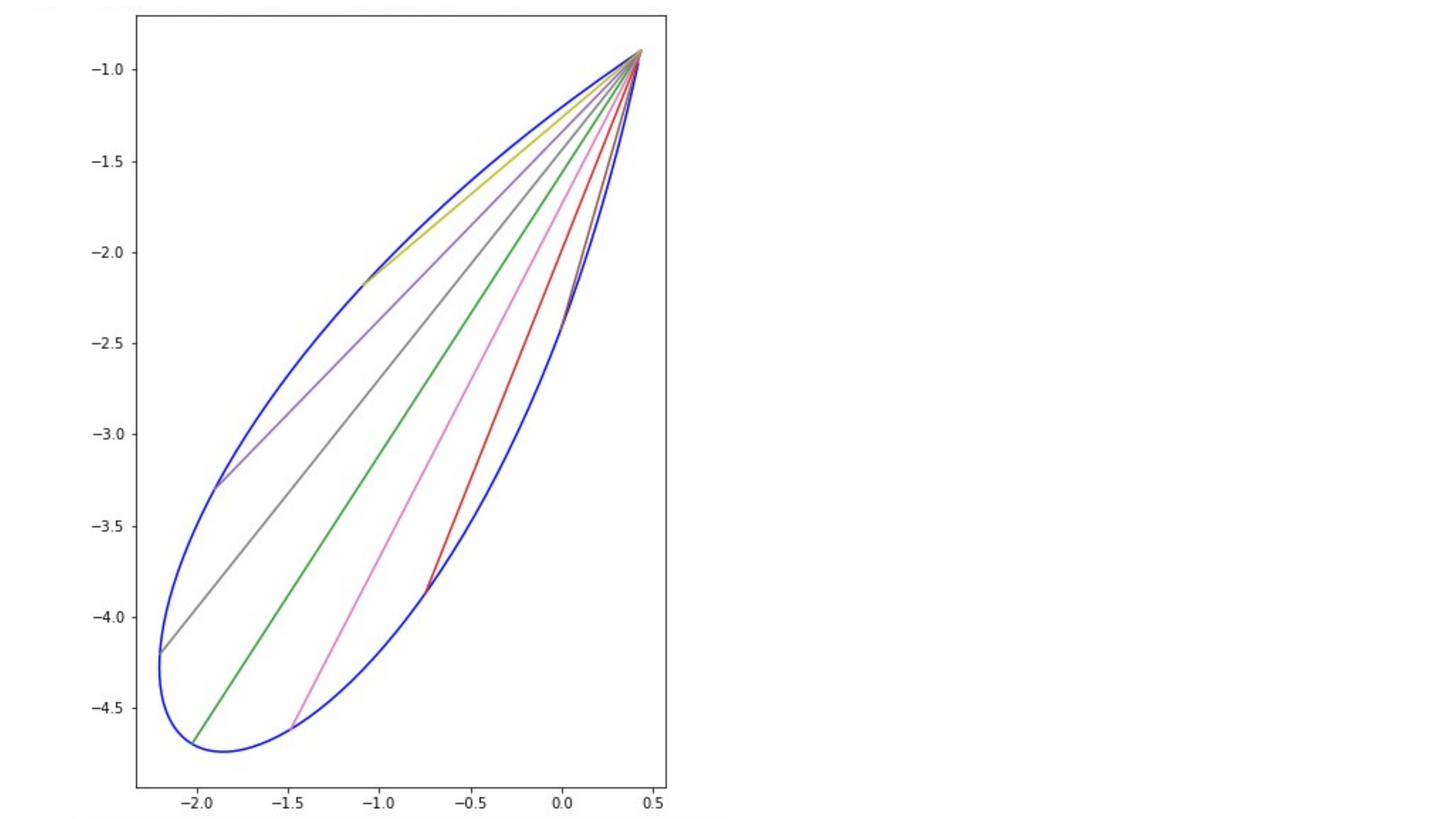}
		\caption{Values of $\operatorname{Tr}(A^{-1}B)$ along the level curves of $\theta$.}
		\label{thetafixe2}
	\end{figure}
\end{minipage}

\bibliography{biblio}

\begin{thebibliography}{10}

\bibitem{Aco}
Miguel Acosta.
\newblock Character varieties for real forms.
\newblock {\em Geom. Dedicata}, 203:257--277, 2019.

\bibitem{Bre}
Ulrich Brehm.
\newblock The shape invariant of triangles and trigonometry in two-point
  homogeneous spaces.
\newblock {\em Geom. Dedicata}, 33(1):59--76, 1990.

\bibitem{FalWen}
Elisha Falbel and Richard~A. Wentworth.
\newblock On products of isometries of hyperbolic space.
\newblock {\em Topology Appl.}, 156(13):2257--2263, 2009.

\bibitem{Fra}
Felipe~A. Franco.
\newblock The length of pu(2,1) relative to special elliptic isometries with
  fixed parameter, 2020.

\bibitem{GolPar}
W.~M. Goldman and J.~R. Parker.
\newblock Complex hyperbolic ideal triangle groups.
\newblock {\em J. Reine Angew. Math.}, 425:71--86, 1992.

\bibitem{Gol}
William~M. Goldman.
\newblock {\em Complex hyperbolic geometry}.
\newblock Oxford Math. Monogr. Oxford: Clarendon Press, 1999.

\bibitem{Law}
Sean Lawton.
\newblock Generators, relations and symmetries in pairs of {{\(3\times 3\)}}
  unimodular matrices.
\newblock {\em J. Algebra}, 313(2):782--801, 2007.

\bibitem{LubMag}
Alexander Lubotzky and Andy~R. Magid.
\newblock {\em Varieties of representations of finitely generated groups},
  volume 336 of {\em Mem. Am. Math. Soc.}
\newblock Providence, RI: American Mathematical Society (AMS), 1985.

\bibitem{Par}
John~R. Parker.
\newblock Notes on complex hyperbolic geometry.
\newblock \url{https://maths.dur.ac.uk/users/j.r.parker/img/NCHG.pdf}.

\bibitem{ParWanXie}
John~R. Parker, Jieyan Wang, and Baohua Xie.
\newblock Complex hyperbolic {{\((3,3,n)\)}} triangle groups.
\newblock {\em Pac. J. Math.}, 280(2):433--453, 2016.

\bibitem{ParWil}
John~R. Parker and Pierre Will.
\newblock A complex hyperbolic {Riley} slice.
\newblock {\em Geom. Topol.}, 21(6):3391--3451, 2017.

\bibitem{Pau}
Julien Paupert.
\newblock Elliptic triangle groups in {{\(PU(2,1)\)}}, {Lagrangian} triples and
  momentum maps.
\newblock {\em Topology}, 46(2):155--183, 2007.

\bibitem{PauWil}
Julien Paupert and Pierre Will.
\newblock Involution and commutator length for complex hyperbolic isometries.
\newblock {\em Mich. Math. J.}, 66(4):699--744, 2017.

\bibitem{Pra}
Anna Pratoussevitch.
\newblock Traces in complex hyperbolic triangle groups.
\newblock {\em Geom. Dedicata}, 111:159--185, 2005.

\bibitem{Pro}
C.~Procesi.
\newblock The invariant theory of n x n matrices.
\newblock {\em Advances in Mathematics}, 19(3):306--381, 1976.

\bibitem{Sch2}
Richard~Evan Schwartz.
\newblock Degenerating the complex hyperbolic ideal triangle groups.
\newblock {\em Acta Math.}, 186(1):105--154, 2001.

\bibitem{Sch}
Richard~Evan Schwartz.
\newblock Complex hyperbolic triangle groups.
\newblock In {\em Proceedings of the international congress of mathematicians,
  ICM 2002, Beijing, China, August 20--28, 2002. Vol. II: Invited lectures},
  pages 339--349. Beijing: Higher Education Press; Singapore: World
  Scientific/distributor, 2002.

\bibitem{Thi}
Morwen Thistlethwaite.
\newblock (p,q,r)-triangles in $\mathbb{H}_\mathbb{C}^2$.
\newblock {\em Unpublished note}.

\bibitem{Wil}
Pierre Will.
\newblock Traces, cross-ratios and 2-generator subgroups of {SU}(2,1).
\newblock {\em Can. J. Math.}, 61(6):1407--1436, 2009.

\end{thebibliography}
\bibliographystyle{plain}

\end{document}